\numberwithin{equation}{section}
\def\bb#1\eb{\textcolor{blue}
{#1}} %
\def\br#1\er{\textcolor{red}
{#1}} %
\def\bv#1\ev{\textcolor{green}
{#1}} %
\def\bc#1\ec{\textcolor{cyan}
{#1}} %
\def\Xint#1{\mathchoice
  {\XXint\displaystyle\textstyle{#1}}%
  {\XXint\textstyle\scriptstyle{#1}}%
  {\XXint\scriptstyle\scriptscriptstyle{#1}}%
  {\XXint\scriptscriptstyle\scriptscriptstyle{#1}}%
  \!\int}
\def\XXint#1#2#3{{\setbox0=\hbox{$#1{#2#3}{\int}$}
  \vcenter{\hbox{$#2#3$}}\kern-.5\wd0}}
\def\-int{\Xint -}
\newcommand{\R}{\mathbb{R}}
\newcommand{\N}{\mathbb{N}}
\DeclareMathOperator{\I}{\mathcal{I}}
\newtheorem{lem}{Lemma}[section]
\newtheorem{thm}{Theorem}[section]
\newtheorem{remark}{Remark}[section]
\begin{document}
\title[Ground state solutions]{Ground state solutions for a fractional Schr\"odinger equation with critical growth}

\author[V. Ambrosio]{Vincenzo Ambrosio}
\address{Dipartimento di Scienze Pure e Applicate (DiSPeA) \\
Universit\`a degli Studi di Urbino 'Carlo Bo' \\
Piazza della Repubblica, 13  \\
61029 Urbino (Pesaro e Urbino, Italy)}
\email{vincenzo.ambrosio@uniurb.it}
\author[G. M. Figueiredo]{Giovany M. Figueiredo}
\address{Departamento de Matem\'atica\\
Universidade de Bras\'ilia \\
70.910-900, Bras\'ilia - DF, Brazil}
\email{giovany@unb.br}

\keywords{Fractional Laplacian; monotonicity trick; critical exponent; compactness Lemma}
\subjclass[2010]{35A15, 35J60, 35R11, 45G05}


\begin{abstract}
In this paper we investigate the existence of nontrivial ground state solutions for the following fractional scalar field equation
\begin{align*}
(-\Delta)^{s} u+V(x)u= f(u) \mbox{ in } \R^{N}, 
\end{align*}
where $s\in (0,1)$, $N> 2s$, $(-\Delta)^{s}$ is the fractional Laplacian, $V: \R^{N}\rightarrow \R$ is a bounded potential satisfying suitable assumptions, and $f\in C^{1, \beta}(\R, \R)$ has critical growth. We first analyze the case $V$ constant, and then we develop a Jeanjean-Tanaka argument \cite{JT} to deal with the non autonomous case. 
As far as we know, all results presented here are new.
\end{abstract}

\maketitle
\section{Introduction}

\noindent
This paper is devoted to the existence of nontrivial solutions for the following fractional scalar field equation 
\begin{align}\label{P}
(-\Delta)^{s} u+V(x)u= f(u) \mbox{ in } \R^{N} 
\end{align}
with $s\in (0,1)$, $N> 2s$, $V:\R^{N}\rightarrow \R$ is a continuous function, and $f: \R\rightarrow \R$ is a smooth function verifying some suitable growth conditions. The fractional Laplacian $(-\Delta)^{s}$ is a pseudo-differential operator defined via Fourier transform by 
$$
\mathcal{F}(-\Delta)^{s}u(\xi)=|\xi|^{2s} \mathcal{F}u(\xi) \quad (\xi\in \R^{N}),
$$
when $u: \R^{N}\rightarrow \R$ belongs to the Schwarz space $\mathcal{S}(\R^{N})$ of rapidly decaying functions.
Also, $(-\Delta)^{s}u$ can be equivalently represented, up to normalization factors, as
$$
(-\Delta)^{s}u(x)=-\int_{\R^{N}} \frac{u(x+y)+u(x-y)-2u(x)}{|y|^{N+2s}} dy  \quad (x\in \R^{N});
$$
see for instance \cite{DPV} for more details.\\
The main motivation of the study of (\ref{P}) comes from looking for standing waves $\psi(x, t)=u(x)e^{-\imath ct}$ for the fractional Schr\"odinger equation 
$$
\imath \frac{\partial \psi}{\partial t}=(-\Delta)^{s} \psi+V(x)\psi -f(\psi) \quad (t, x)\in \R\times \R^{N}.
$$
Such equation has been introduced by Laskin \cite{Laskin1, Laskin2}, as a result of expanding the Feynman path integral, from the Brownian like to the L\'evy like quantum mechanical paths.
\noindent

When $s=1$ in (\ref{P}), we derive the classical nonlinear Schr\"odinger equation which has been extensively studied in the last twenty years by many authors. Since we cannot review the huge bibliography here, we just mention the works \cite{AS, AM2, BL1, DF, FW, Rab, Wang, W} and references therein, where several results on the existence and the multiplicity of solutions are obtained under different assumptions on the potential $V$ and the nonlinearity $f$.
\noindent

In these last years, problems involving fractional operators are receiving a lot of attention.
Indeed fractional spaces and nonlocal equations play a fundamental role in the investigation of many several sciences such as crystal dislocation, 
obstacle problem, optimization, finance, phase transition, soft thin films, multiple scattering, quasi-geostrophic flows, water waves, anomalous diffusion, conformal geometry and minimal surfaces and so on.
The interested reader may consult \cite{DPV} and \cite{MBRS}, where a more extensive bibliography and an introduction to the subject are given.

\noindent
In what follows, we recall some known results established in recent years, concerning with the fractional Laplacian equations with critical growth.

Servadei and Valdinoci \cite{SV} (see also \cite{FMS, MS}) established a Brezis-Nirenberg type result for the following problem
\begin{equation*}
\left\{
\begin{array}{ll}
(-\Delta)^{s}u-\lambda u=|u|^{2^{*}_{s}-2}u &\mbox{ in } \Omega   \\
u=0   &\mbox{ in }  \R^{N}\setminus \Omega,
\end{array}
\right.
\end{equation*}
where $\Omega\subset \R^{N}$ is a smooth bounded domain and $\lambda>0$ is a parameter.
Barrios at al. \cite{BCPS} studied the effect of lower order perturbations in the existence of positive solutions to the following critical elliptic problem involving the spectral Laplacian
\begin{equation*}
\left\{
\begin{array}{ll}
(-\Delta_{\Omega})^{s}u-\lambda u^{q}=u^{2^{*}_{s}-1} &\mbox{ in } \Omega   \\
u=0   &\mbox{ on }  \partial \Omega,
\end{array}
\right.
\end{equation*}
where $q\in (0, 2^{*}_{s}-1)$ and $\lambda>0$; see also \cite{BCSS, CaT, tan} for related results.
Fiscella and Valdinoci \cite{FV} dealt with the existence and the asymptotic behavior of non-negative solutions for a class of stationary Kirchhoff  problems driven by the fractional Laplacian 
\begin{equation*}
\left\{
\begin{array}{ll}
-M\left(\iint_{\R^{2N}} \frac{|u(x)-u(y)|^{2}}{|x-y|^{N+2s}} \, dx dy\right)(-\Delta)^{s}u=\lambda f(x, u)+|u|^{2^{*}_{s}-2}u &\mbox{ in } \Omega   \\
u=0   &\mbox{ in }  \R^{N}\setminus \Omega,
\end{array}
\right.
\end{equation*}
where $M$ is a Kirchhoff function and $f$ satisfies the Ambrosetti-Rabinowitz condition.\\
By using variational methods, Shang and Zhang \cite{SZ} studied the existence and the multiplicity of nonnegative solutions for
\begin{align*}
\varepsilon^{2s}(-\Delta)^{s} u+V(x)u=|u|^{2^{*}_{s}-2}u+ \lambda f(u) \mbox{ in } \R^{N} 
\end{align*}
where $\varepsilon, \lambda>0$, $f$ has a subcritical growth, and $V$ is a positive continuous function such that 
$$
0<\inf_{x\in \R^{N}} V(x)<\liminf_{|x|\rightarrow \infty} V(x)<\infty.
$$
Teng and He \cite{TH} combined the $s$-harmonic extension method of Caffarelli and Silvestre \cite{CS}, the concentration-compactness principle of Lions and methods of Brezis and Nirenberg to prove the existence of ground state solutions for 
\begin{align*}
(-\Delta)^{s} u+u=P(x)|u|^{p-2}u+Q(x)|u|^{2^{*}_{s}-2}u \mbox{ in } \R^{N},
\end{align*}
where $p\in (2, 2^{*}_{s})$ and $P(x)$ and $Q(x)$ are continuous functions verifying appropriate hypotheses.\\
Zhang et al. \cite{ZZR} investigated existence of nontrivial radially symmetric solutions for
\begin{align*}
(-\Delta)^{s} u+V(x)u=k(x)f(u)+\lambda |u|^{2^{*}_{s}-2}u \mbox{ in } \R^{N} 
\end{align*}
where $V(x)$ and $k(x)$ are radially symmetric functions satisfying some extra assumptions, and the nonlinearity $f$ is subcritical.
He and Zou \cite{HZ} obtained, via penalization technique and Ljusternik-Schnirelmann theory, the existence and concentration results for the problem
\begin{equation*}
\left\{
\begin{array}{ll}
\varepsilon^{2s}(-\Delta)^{s} u+V(x)u=f(u)+u^{2^{*}_{s}-1} &\mbox{ in } \R^{N}   \\
u>0   &\mbox{ in }  \R^{N}
\end{array},
\right.
\end{equation*}
under local condition imposed on $V$, and $f$ is a subcritical nonlinearity.\\
Further results concerning the fractional Schr\"odinger equations involving critical and subcritical nonlinearities can be found in \cite{ADOM, A3, A1, A4, A2, DDDV, DPPV, DoMS, FQT, FS, MBR, Secchi1}.
\noindent
 
Inspired by the above works, in the present paper we aim to investigate the existence of least energy solutions for the equation (\ref{P}), when $f$ has a critical growth and $V$ is a bounded potential satisfying some suitable assumptions.\\
More precisely, we assume that $f: \R \rightarrow \R$ verifies the following hypotheses:
\begin{compactenum}[($f1$)]
\item $f\in C^{1, \beta}(\R, \R)$ for some $\beta>\max\{0, 1-2s\}$;
\item $\displaystyle{\lim_{t\rightarrow 0^{+}} \frac{f(t)}{t}=0}$;
\item $\displaystyle{\lim_{t\rightarrow +\infty} \frac{f(t)}{t^{2^{*}_{s}-1}}= K>0}$, where $\displaystyle{2^{*}_{s}=\frac{2N}{N-2s}}$;
\item There exist  $D>0$ and $\displaystyle{\max\left\{2, \frac{4s}{N-2s}\right\}<q<2^{*}_{s}}$ such that  
$$
f(t)\geq K t^{{2^{*}_{s}}-1}+ D t^{q-1} \mbox{ for all } t\geq 0;
$$
\item There exists $C>0$ such that $|f'(t)|\leq C(1+|t|^{2^{*}_{s}-2})$ for all $t\geq 0$.
\end{compactenum}
We observe that the assumptions $(f3)$ and $(f4)$ on the nonlinearity $f$ enable us to consider the critical growth case. In the case $s=1$, the assumption $(f4)$ was introduced in \cite{ZZC} to study a Berestycki-Lions type problem with critical growth. We point out that $(f4)$ plays an important role to ensure the existence of solutions for the problem (\ref{P}). In fact, if we take $f(t)=(t^{+})^{2^{*}_{s}-1}$, then $f$ satisfies $(f1)$-$(f3)$, and by using the Pohozaev identity \cite{A1, CW} for the fractional Laplacian, we can see that there are no  nontrivial solutions to (\ref{P}). \\ 
Concerning the potential $V: \R^{N}\rightarrow \R$, we suppose that
\begin{compactenum}[($V1$)]
\item $V\in C^{1}(\R^{N}, \R)$;
\item there exists $V_{0}>0$ such that $\inf_{x\in \R^{N}} V(x)\geq V_{0}$;
\item $V(x)\leq V_{\infty}:=\lim_{|x|\rightarrow \infty} V(x)$ for all $x\in \R^{N}$;
\item 
$|\max\{x\cdot \nabla V(x), 0\}|_{L^{\frac{N}{2s}}(\R^{N})}< 2s S_{*}$, where $S_{*}$ is the best constant  of the embedding $H^{s}(\R^{N})$ into $L^{2^{*}}(\R^{N})$ (see \cite{CT}).
\end{compactenum}

\noindent
Now we state our first main result concerning the existence of ground state solutions to (\ref{P}) in the case of constant potentials (which clearly verify the assumptions $(V1)$-$(V4)$).
\begin{thm}\label{thm1}
Let $s\in (0,1)$ and $N> 2s$. Assume that  $f$ verifies $(f1)$-$(f4)$ and $V(x)\equiv V>0$ is constant. Then $(\ref{P})$ possesses a nontrivial ground state solution $u\in H^{s}(\R^{N})$. 
\end{thm}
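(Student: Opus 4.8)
The plan is to obtain the ground state as a mountain pass critical point of the energy functional, circumventing the missing Ambrosetti--Rabinowitz condition via Jeanjean's monotonicity trick and the loss of compactness at the critical exponent via a concentration--compactness (splitting) argument carried out below an explicit threshold.

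First I would set things up so as to produce a positive solution: replace $f(t)$ by $0$ for $t\le 0$, and work with
$$
J(u)=\frac12\,\|u\|^{2}-\int_{\R^{N}}F(u)\,dx,\qquad
\|u\|^{2}:=[u]_{s}^{2}+V\!\int_{\R^{N}}u^{2}\,dx,\quad
[u]_{s}^{2}:=\int_{\R^{N}}|\xi|^{2s}|\widehat u(\xi)|^{2}\,d\xi,\quad F(t):=\int_{0}^{t}f(\tau)\,d\tau ,
$$
on $H^{s}(\R^{N})$. Testing the equation with $u^{-}$ shows every nontrivial critical point is nonnegative, and then $(f1)$, $(f5)$, fractional $L^{p}$/Schauder regularity and the strong maximum principle make it positive and classical. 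By $(f2)$--$(f3)$ one has $|F(t)|\le\varepsilon t^{2}+C_{\varepsilon}|t|^{2^{*}_{s}}$, so with the fractional Sobolev inequality $J$ has the mountain pass geometry at the origin, while $(f4)$ gives $F(t)\ge\frac{K}{2^{*}_{s}}t^{2^{*}_{s}}+\frac{D}{q}t^{q}$ for $t\ge0$, hence $J(tu_{0})\to-\infty$ as $t\to+\infty$ for any fixed $0\le u_{0}\in H^{s}(\R^{N})\setminus\{0\}$.

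Since an Ambrosetti--Rabinowitz condition is not available, I would then freeze a parameter: set $J_{\lambda}(u)=\frac12\|u\|^{2}-\lambda\int_{\R^{N}}F(u)\,dx$ for $\lambda\in[\frac12,1]$ — these functionals share the mountain pass geometry uniformly, with a common point $t_{0}u_{0}$ where $J_{\lambda}<0$ — and apply Jeanjean's monotonicity trick: for a.e.\ $\lambda$ there is a bounded Palais--Smale sequence for $J_{\lambda}$ at its mountain pass level $c_{\lambda}$, and $\lambda\mapsto c_{\lambda}$ is non-increasing. For such $\lambda$ the decisive estimate is the strict inequality
$$
c_{\lambda}<\frac{s}{N}\,S_{*}^{\frac{N}{2s}}\,(\lambda K)^{-\frac{N-2s}{2s}},
$$
whose right-hand side is the mountain pass level of the limiting critical functional $\tfrac12[\,\cdot\,]_{s}^{2}-\tfrac{\lambda K}{2^{*}_{s}}\int(\cdot)_{+}^{2^{*}_{s}}$; one proves it by inserting a truncated Aubin--Talenti extremal $u_{\varepsilon}$ for $S_{*}$ into $\max_{t>0}J_{\lambda}(tu_{\varepsilon})$ and checking that the contribution of $\tfrac{\lambda D}{q}\int u_{\varepsilon}^{q}$ beats, for $\varepsilon$ small, the $O(\varepsilon^{N-2s})$ (and logarithmic, in the borderline dimensions) errors coming from the cut-off and from the nonlocal energy — which works exactly because $(f4)$ demands $q>\max\{2,\tfrac{4s}{N-2s}\}=\max\{2,2^{*}_{s}-2\}$. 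Below this threshold the compactness (splitting) lemma then applies to the bounded Palais--Smale sequence: it cannot vanish (vanishing annihilates the subcritical part of $\int F$ and leaves only purely critical bubbles, each of energy $\ge\tfrac{s}{N}S_{*}^{N/(2s)}(\lambda K)^{-(N-2s)/(2s)}>c_{\lambda}$, a contradiction), so after a translation it converges weakly to some $u_{\lambda}\ne0$ solving $(-\Delta)^{s}u+Vu=\lambda f(u)$, and a Brezis--Lieb splitting of $J_{\lambda},J_{\lambda}'$ together with the threshold rules out the escape of a bubble, giving strong convergence, $J_{\lambda}(u_{\lambda})=c_{\lambda}$ and $u_{\lambda}>0$.

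Finally I would let $\lambda\to1^{-}$: pick $\lambda_{n}\uparrow1$ with solutions $u_{n}:=u_{\lambda_{n}}$, $J_{\lambda_{n}}(u_{n})=c_{\lambda_{n}}$. Using the Pohozaev identity for $u_{n}$ — legitimate because $(f1)$ with $\beta>\max\{0,1-2s\}$ together with $(f5)$ lets one bootstrap $u_{n}$ to the Hölder regularity required by the fractional Pohozaev identity — and combining it with $J_{\lambda_{n}}'(u_{n})u_{n}=0$ yields the clean identity $J_{\lambda_{n}}(u_{n})=\tfrac{s}{N}[u_{n}]_{s}^{2}$, so $[u_{n}]_{s}^{2}=\tfrac{N}{s}c_{\lambda_{n}}\le\tfrac{N}{s}c_{1/2}$; then $(f2)$--$(f3)$ and $(f5)$ bound $|u_{n}|_{2}$ as well, so $(u_{n})$ is bounded and is a Palais--Smale sequence for $J=J_{1}$ at level $c:=\lim_{n}c_{\lambda_{n}}=c_{1}$. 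Since $c<\tfrac{s}{N}S_{*}^{N/(2s)}K^{-(N-2s)/(2s)}$, the compactness lemma gives (after a translation, the problem being autonomous) strong convergence to a positive critical point $u$ of $J$ with $J(u)=c$. That $u$ is a ground state follows because, for any nontrivial critical point $w$, its Pohozaev identity makes $t\mapsto J(w_{t})$, $w_{t}(x):=w(x/t)$, attain its maximum at $t=1$, and this curve (reparametrised, with a small initial segment added) is an admissible mountain pass path, so $J(w)=\max_{t>0}J(w_{t})\ge c$; hence $c=\inf\{J(w):w\in H^{s}(\R^{N})\setminus\{0\},\,J'(w)=0\}$ and $u$ attains it. The hard part, I expect, is the threshold estimate: one must expand $\max_{t>0}J_{\lambda}(tu_{\varepsilon})$ for the truncated fractional Talenti functions precisely enough to see the $\int u_{\varepsilon}^{q}$ gain overcome the cut-off and nonlocal tail errors, which is exactly why $(f4)$ is imposed with $q>\max\{2,\tfrac{4s}{N-2s}\}$.
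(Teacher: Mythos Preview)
Your strategy is essentially the paper's: Jeanjean's monotonicity trick on $\I_\lambda=\frac12\|\cdot\|^2-\lambda\int F$, the threshold $c_\lambda<\frac{s}{N}S_*^{N/2s}\lambda^{-(N-2s)/2s}$ via truncated Talenti profiles (this is exactly Lemma~\ref{lemma2.2}, and the condition $q>\max\{2,\tfrac{4s}{N-2s}\}$ in $(f4)$ is precisely what makes the asymptotics work), a global compactness lemma (Lemma~\ref{lemma2.6}), Pohozaev for the boundedness of $(u_{\lambda_n})$, and passage to the limit $\lambda_n\to1$. One point to tighten: you assert that the threshold yields \emph{strong} convergence of the translated Palais--Smale sequence, but the threshold only excludes purely critical Sobolev bubbles --- the splitting can still shed translated nontrivial critical points of $\I_\lambda$ itself. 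The paper accordingly claims only $\I_\lambda(u_\lambda)\le c_\lambda$ (from $c_\lambda=\I_\lambda(u_\lambda)+\sum_j\I_\lambda(w_\lambda^j)$ with each term $\ge0$ by Pohozaev); this inequality suffices for everything that follows, and your own path argument, applied at level $\lambda$, would in fact upgrade it to equality a posteriori. Where you genuinely differ is in the ground state step: you use the Pohozaev scaling path $t\mapsto w(\cdot/t)$ to show that every nontrivial critical point $w$ satisfies $\I(w)\ge c_1$, whence the mountain pass level \emph{is} the ground state level; the paper instead takes a minimizing sequence of critical points, bounds it above and below in norm (Remark~\ref{remark2.5}), and reapplies the compactness lemma to extract a minimizer. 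Your route is shorter and gives the extra information $m=c_1$; the paper's avoids having to argue that the scaling path can be joined, within $\{\I_{1/2}<0\}$, to the fixed endpoint $v_2$ of $\Gamma$. Finally, Theorem~\ref{thm1} assumes only $(f1)$--$(f4)$, so drop the appeal to $(f5)$ in your regularity/Pohozaev step --- the paper's Lemma~\ref{lemma2.4} does without it.
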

\noindent
Now, we give a sketch of the proof of Theorem \ref{thm1}. 
We recall that for a weak solution of problem (\ref{P}), we mean a function $u\in H^{s}(\R^{N})$ such that 
$$
\iint_{\R^{2N}} \frac{(u(x)-u(y))}{|x-y|^{N+2s}} (\varphi(x)-\varphi(y)) \, dx dy+\int_{\R^{N}} V(x) u \varphi \, dx=\int_{\R^{N}} f(u) \varphi \, dx
$$
for any $\varphi \in H^{s}(\R^{N})$.\\
Here $H^{s}(\R^{N})$ is the fractional Sobolev space defined by 
$$
H^{s}(\R^{N})=\left\{u\in L^{2}(\R^{N}): \frac{u(x)-u(y)}{|x-y|^{\frac{N+2s}{2}}}\in L^{2}(\R^{2N})\right\}.
$$
In order to obtain the existence of a nontrivial solution to (\ref{P}), we look for critical points of the Euler-Lagrange functional associated to (\ref{P}), that is
$$
\I(u)=\frac{1}{2} \int_{\R^{N}} \left(|(-\Delta)^{\frac{s}{2}} u|^{2}+V(x) u^{2}\right) \, dx-\int_{\R^{N}} F(u) \, dx
$$
for any $u\in H^{s}(\R^{N})$, where $F(t)=\int_{0}^{t} f(\tau) \, d\tau$. 
By using the assumptions on $f$, it is clear that $\I$ has a mountain pass geometry, but it is hard to verify the boundedness of Palais-Smale sequences of $\I$ (such Palais-Smale sequences there exist in view of the Ekeland's principle). To overcome this difficulty, we use the idea in \cite{J}.
For $\lambda\in [\frac{1}{2}, 1]$, let us introduce the following family of functionals
\begin{equation*}
\I_{\lambda}(u)=\frac{1}{2} \int_{\R^{N}} \left(|(-\Delta)^{\frac{s}{2}} u|^{2}+V(x) u^{2}\right) \, dx-\lambda \int_{\R^{N}} F(u) \, dx .
\end{equation*}
As first step, we prove that for any $\lambda\in [\frac{1}{2}, 1]$, $\I_{\lambda}$ has a mountain pass geometry and that $\I_{\lambda}$ admits a bounded Palais-Smale sequence $(u_{n})$ at the mountain-pass level $c_{\lambda}$. More precisely, we use the following abstract result due to Jeanjean \cite{J}:
\begin{thm}\cite{J}\label{thm2.1}
Let $(X, \|\cdot\|)$ be a Banach space and $J \subset \R_{+}$ be an interval.
Let $(\I_{\lambda})_{\lambda\in J}$ be a family of $C^{1}$ functionals on $X$ of the form
$$
\I_{\lambda}(u)=A(u)-\lambda B(u), \quad \mbox{ for } \lambda \in J,
$$
where $B(u)\geq 0$ for all $u\in X$, and either $A(u)\rightarrow \infty$ or $B(u)\rightarrow \infty$ as $\|u\|\rightarrow \infty$. \\
We assume that there exist $v_{1}, v_{2}\in X$ such that
$$
c_{\lambda}=\inf_{\gamma \in \Gamma} \max_{t\in [0, 1]} \I_{\lambda}(\gamma(t))>\max\{\I_{\lambda}(v_{1}), \I_{\lambda}(v_{2})\}, \quad \forall \lambda \in J
$$
where 
$$
\Gamma=\{\gamma\in C([0, 1], X): \gamma(0)=v_{1}, \gamma(1)=v_{2}\}.
$$
Then, for almost every $\lambda\in J$, there is a sequence $(u_{n})\subset X$ such that
\begin{compactenum}[(i)]
\item $(u_{n})$ is bounded;
\item $\I_{\lambda}(u_{n})\rightarrow c_{\lambda}$;
\item $\I'_{\lambda}(u_{n})\rightarrow 0$ on $X^{-1}$.
\end{compactenum}
Moreover, the map $\lambda \mapsto c_{\lambda}$ is continuous from the left hand-side.
\end{thm}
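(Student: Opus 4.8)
This is the ``monotonicity trick'' of Jeanjean; I describe only the structure of the proof, since the individual steps are standard. \emph{Step 1: $c_\lambda$ is finite and non-increasing in $\lambda$.} For a fixed $\gamma\in\Gamma$ the number $\max_{t\in[0,1]}\I_\lambda(\gamma(t))$ is finite, while the hypothesis gives $c_\lambda>\max\{\I_\lambda(v_1),\I_\lambda(v_2)\}$, so $c_\lambda\in\R$ for every $\lambda\in J$. If $\lambda_1<\lambda_2$, then $B\ge 0$ forces $\I_{\lambda_2}\le\I_{\lambda_1}$ pointwise, hence $\max_{t\in[0,1]}\I_{\lambda_2}(\gamma(t))\le\max_{t\in[0,1]}\I_{\lambda_1}(\gamma(t))$ for every $\gamma\in\Gamma$, whence $c_{\lambda_2}\le c_{\lambda_1}$. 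By Lebesgue's theorem on monotone functions, $\lambda\mapsto c_\lambda$ is differentiable at almost every $\lambda\in J$; fix such a $\lambda$, which we may also take to be an interior point of $J$.

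\emph{Step 2: almost optimal paths with a uniform size bound.} Pick $\lambda_n\uparrow\lambda$ and, for each $n$, a path $\gamma_n\in\Gamma$ with $\max_{t\in[0,1]}\I_{\lambda_n}(\gamma_n(t))\le c_{\lambda_n}+(\lambda-\lambda_n)$. Since $\I_\lambda\le\I_{\lambda_n}$ and $c$ is continuous at $\lambda$, on the one hand $\max_{t\in[0,1]}\I_\lambda(\gamma_n(t))\le c_{\lambda_n}+(\lambda-\lambda_n)=c_\lambda+o(1)$. On the other hand, writing $\I_{\lambda_n}=\I_\lambda+(\lambda-\lambda_n)B$, for every $t$ with $\I_\lambda(\gamma_n(t))\ge c_\lambda-(\lambda-\lambda_n)$ one gets
\[
B(\gamma_n(t))\le\frac{c_{\lambda_n}-\I_\lambda(\gamma_n(t))}{\lambda-\lambda_n}+1\le\frac{c_{\lambda_n}-c_\lambda}{\lambda-\lambda_n}+2 ,
\]
and the last quantity converges to $-c'_\lambda+2$, hence is bounded by a constant $C_\lambda$ independent of $n$ (for $n$ large). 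Inserting this into the coercivity hypothesis — if $A$ is coercive use $A(\gamma_n(t))=\I_\lambda(\gamma_n(t))+\lambda B(\gamma_n(t))\le c_\lambda+1+\lambda C_\lambda$; if $B$ is coercive use the bound on $B$ directly — I obtain a radius $K_\lambda>0$ such that, for all large $n$ and all $t\in[0,1]$,
\[
\I_\lambda(\gamma_n(t))\ge c_\lambda-(\lambda-\lambda_n)\ \Longrightarrow\ \|\gamma_n(t)\|\le K_\lambda .
\]
Thus the ``high part'' of each $\gamma_n$ stays in the fixed ball of radius $K_\lambda$.

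\emph{Step 3: a deformation argument.} Suppose, for contradiction, that $\I_\lambda$ has no bounded Palais--Smale sequence at level $c_\lambda$; in particular there is none contained in $\{\|u\|\le K_\lambda+3\}$, so there exist $\alpha,\varepsilon_0>0$ with $\|\I'_\lambda(u)\|\ge\alpha$ whenever $\|u\|\le K_\lambda+3$ and $|\I_\lambda(u)-c_\lambda|\le\varepsilon_0$. Set
\[
\varepsilon:=\min\Big\{\tfrac{\varepsilon_0}{2},\ \tfrac{\alpha}{8},\ \tfrac14\big(c_\lambda-\max\{\I_\lambda(v_1),\I_\lambda(v_2)\}\big)\Big\}>0 .
\]
Since $\|\I'_\lambda(u)\|\ge\alpha\ge 8\varepsilon$ for all $u$ with $\|u\|\le K_\lambda+3$ and $|\I_\lambda(u)-c_\lambda|\le 2\varepsilon$, the quantitative deformation lemma (applied with $\delta=1$ and with $S$ the ball of radius $K_\lambda+1$) yields $\eta\in C([0,1]\times X,X)$ that fixes every $u$ with $|\I_\lambda(u)-c_\lambda|>2\varepsilon$ or $\|u\|>K_\lambda+3$, does not increase $\I_\lambda$ along $t\mapsto\eta(t,u)$, and satisfies $\I_\lambda(\eta(1,u))\le c_\lambda-\varepsilon$ whenever $\|u\|\le K_\lambda+1$ and $\I_\lambda(u)\le c_\lambda+\varepsilon$. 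Choose $n$ so large that $\max_{t\in[0,1]}\I_\lambda(\gamma_n(t))\le c_\lambda+\varepsilon$ and $\lambda-\lambda_n<\varepsilon$, and put $\tilde\gamma_n:=\eta(1,\gamma_n(\cdot))$. Since $\I_\lambda(v_i)<c_\lambda-2\varepsilon$, the endpoints are unchanged, so $\tilde\gamma_n\in\Gamma$. For each $t$: if $\I_\lambda(\gamma_n(t))\ge c_\lambda-(\lambda-\lambda_n)$, then $\|\gamma_n(t)\|\le K_\lambda$ by Step 2 and $\I_\lambda(\gamma_n(t))\le c_\lambda+\varepsilon$, hence $\I_\lambda(\tilde\gamma_n(t))\le c_\lambda-\varepsilon$; otherwise $\I_\lambda(\tilde\gamma_n(t))\le\I_\lambda(\gamma_n(t))<c_\lambda-(\lambda-\lambda_n)$. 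In both cases $\I_\lambda(\tilde\gamma_n(t))<c_\lambda$, so $\max_{t\in[0,1]}\I_\lambda(\tilde\gamma_n(t))<c_\lambda$, contradicting the definition of $c_\lambda$. Therefore $\I_\lambda$ admits a bounded Palais--Smale sequence $(u_n)$ at level $c_\lambda$, which is exactly (i)--(iii).

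\emph{Step 4: left-continuity of $\lambda\mapsto c_\lambda$.} Monotonicity gives $\lim_{\mu\to\lambda^-}c_\mu\ge c_\lambda$. Conversely, fix $\epsilon>0$ and $\gamma\in\Gamma$ with $\max_{t\in[0,1]}\I_\lambda(\gamma(t))\le c_\lambda+\epsilon$; as $\gamma([0,1])$ is compact, $M_\gamma:=\max_{t\in[0,1]}B(\gamma(t))<\infty$, and for $\mu<\lambda$ the identity $\I_\mu(\gamma(t))=\I_\lambda(\gamma(t))+(\lambda-\mu)B(\gamma(t))$ gives $c_\mu\le c_\lambda+\epsilon+(\lambda-\mu)M_\gamma$; letting $\mu\to\lambda^-$ and then $\epsilon\to0$ yields $\lim_{\mu\to\lambda^-}c_\mu\le c_\lambda$. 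Hence $\lambda\mapsto c_\lambda$ is left-continuous. The delicate point of the whole argument is Step 3: one must match the deformation parameters $(\varepsilon,\delta,S)$ with the radius $K_\lambda$ and the vanishing tolerance $\lambda-\lambda_n$, and exploit the fact that on the part of $\gamma_n$ outside the ball of radius $K_\lambda$ the functional $\I_\lambda$ is already strictly below $c_\lambda$, so that no deformation is needed there.
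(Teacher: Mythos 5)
Your proof is correct. The paper does not prove this statement at all — Theorem \ref{thm2.1} is quoted directly from Jeanjean \cite{J} — and your argument is essentially Jeanjean's original monotonicity-trick proof (almost-everywhere differentiability of the non-increasing map $\lambda\mapsto c_{\lambda}$, the uniform radius $K_{\lambda}$ for the high parts of almost-optimal paths, the quantitative deformation lemma with parameters matched to $K_{\lambda}$ and $\lambda-\lambda_{n}$, and the elementary left-continuity argument), so it matches the cited source's approach.
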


\noindent
Since we are dealing with the critical case, we are able to prove that for any $\lambda\in [\frac{1}{2}, 1]$ 
$$
0<c_{\lambda}<\frac{s}{N}\frac{S_{*}^{\frac{N}{2s}}}{\lambda^{\frac{N-2s}{2s}}}.
$$
Secondly, in the spirit of \cite{JT} (see also \cite{LG, ZZ}), we establish a global compactness result in the critical case, which gives a description of the bounded Palais-Smale sequences of $\I_{\lambda}$. Then, by using the facts that every solution of (\ref{P}) satisfies the Pohozaev Identity  and the compactness Lemma, we prove the existence of a bounded Palais-Smale sequence of $\I$ which converges to a positive solution to (\ref{P}).

\noindent

Now, we state our second main result of this paper, which deals with the existence of ground state of (\ref{P}) in the case in which $V$ is not a constant. 
\begin{thm}\label{thm2}
Let $s\in (0,1)$ and $N> 2s$. Assume that  $f$ verifies $(f1)$-$(f5)$ and $V$ satisfies $(V1)$-$(V4)$, and $V(x)\not \equiv V_{\infty}$. Then $(\ref{P})$ admits a nontrivial ground state solution $u\in H^{s}(\R^{N})$.
\end{thm}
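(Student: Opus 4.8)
The plan is to run, in the non-autonomous setting, the Jeanjean--Tanaka scheme already sketched after Theorem~\ref{thm1}, the extra ingredient being a comparison with the ``problem at infinity''. Denote by $\I_\lambda^\infty$ the functional obtained from $\I_\lambda$ upon replacing $V$ by the constant $V_\infty$, and by $c_\lambda^\infty$ its mountain pass level; since constant potentials satisfy $(V1)$--$(V4)$ and $\lambda f$ still satisfies $(f1)$--$(f4)$, Theorem~\ref{thm1} provides, for each $\lambda$, a positive ground state $w_\lambda$ of the autonomous problem with $\I_\lambda^\infty(w_\lambda)=c_\lambda^\infty$. As in the autonomous case I would first replace $f$ by $0$ on $(-\infty,0]$, so that $B(u):=\int_{\R^N}F(u)\,dx\ge0$ and Theorem~\ref{thm2.1} applies with $A(u)=\frac12\int_{\R^N}(|(-\Delta)^{s/2}u|^2+V(x)u^2)\,dx$: by $(f1)$--$(f3)$ and $(V2)$ the family $(\I_\lambda)_{\lambda\in[1/2,1]}$ has a common mountain pass geometry (take $v_1=0$ and a fixed $v_2$ of negative energy), so for a.e.\ $\lambda$ there is a bounded sequence $(u_n)$ with $\I_\lambda(u_n)\to c_\lambda$, $\I_\lambda'(u_n)\to0$, and $\lambda\mapsto c_\lambda$ is left-continuous.

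Next I would establish the two energy inequalities that drive the compactness. Exactly as for Theorem~\ref{thm1}, using a path built from the Sobolev extremal for $S_*$ together with $(f4)$ and $q>\max\{2,\frac{4s}{N-2s}\}$, one gets $0<c_\lambda<\frac{s}{N}S_*^{N/2s}\lambda^{-(N-2s)/2s}$ for all $\lambda\in[1/2,1]$. For the second, I would test the mountain pass level of $\I_\lambda$ along the dilation path $t\mapsto w_\lambda(\cdot/t)$: since by the Pohozaev identity of the autonomous problem $\max_{t>0}\I_\lambda^\infty(w_\lambda(\cdot/t))=\I_\lambda^\infty(w_\lambda)=c_\lambda^\infty$, while $\I_\lambda(w_\lambda(\cdot/t))-\I_\lambda^\infty(w_\lambda(\cdot/t))=\frac{t^N}{2}\int_{\R^N}(V(tz)-V_\infty)\,w_\lambda(z)^2\,dz<0$ for every $t>0$ (this is where $(V3)$, $V\not\equiv V_\infty$ and $w_\lambda>0$ enter), one obtains $c_\lambda<c_\lambda^\infty$. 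The core of the argument is then a global compactness (splitting) lemma for bounded Palais--Smale sequences of $\I_\lambda$ in the critical case: if $(u_n)$ is bounded with $\I_\lambda(u_n)\to d$ and $\I_\lambda'(u_n)\to0$, then, along a subsequence, $u_n=u+\sum_{j=1}^k w^{(j)}(\cdot-y_n^{(j)})+(\text{Sobolev bubbles})+o(1)$ in $H^s(\R^N)$, where $\I_\lambda'(u)=0$, each $w^{(j)}$ is a nonzero critical point of $\I_\lambda^\infty$ (so $\I_\lambda^\infty(w^{(j)})\ge c_\lambda^\infty$), each bubble contributes at least $\frac{s}{N}S_*^{N/2s}\lambda^{-(N-2s)/2s}$ to the energy, and $d$ equals the sum of $\I_\lambda(u)$ and all these contributions. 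Since $\I_\lambda(u)\ge0$ by $(V4)$ and the Pohozaev identity, at the level $d=c_\lambda$ the two inequalities above leave no room for any $w^{(j)}$ or any bubble; hence $u_n\to u_\lambda:=u$ strongly, and $u_\lambda\ne0$ is a critical point of $\I_\lambda$ with $\I_\lambda(u_\lambda)=c_\lambda$.

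I would then pick $\lambda_n\uparrow1$ with critical points $u_{\lambda_n}$ of $\I_{\lambda_n}$ as above. By $(f1)$, $(f5)$ and elliptic regularity each $u_{\lambda_n}$ solves $(-\Delta)^su+V(x)u=\lambda_n f(u)$ and satisfies the corresponding Pohozaev identity; combining it with $\I_{\lambda_n}(u_{\lambda_n})=c_{\lambda_n}$ eliminates the nonlinear term and gives $\frac{s}{N}\int_{\R^N}|(-\Delta)^{s/2}u_{\lambda_n}|^2\,dx=c_{\lambda_n}+\frac1{2N}\int_{\R^N}(x\cdot\nabla V)\,u_{\lambda_n}^2\,dx$, whose last term is bounded by H\"older and Sobolev by $\frac1{2NS_*}|\max\{x\cdot\nabla V,0\}|_{L^{N/2s}}\int_{\R^N}|(-\Delta)^{s/2}u_{\lambda_n}|^2\,dx$ with coefficient $<\frac{s}{N}$ thanks to $(V4)$; this yields a uniform bound on $\int_{\R^N}|(-\Delta)^{s/2}u_{\lambda_n}|^2\,dx$, and then on $\|u_{\lambda_n}\|_{H^s}$. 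By left-continuity of $\lambda\mapsto c_\lambda$, $(u_{\lambda_n})$ is a bounded Palais--Smale sequence for $\I=\I_1$ at level $c_1<\min\{c_1^\infty,\frac{s}{N}S_*^{N/2s}\}$, so the splitting lemma at $\lambda=1$ produces a nontrivial solution $u$ of $(\ref{P})$ with $\I(u)=c_1$, positive by the maximum principle (so the truncation of $f$ is harmless). Finally, for the ground state I would set $m=\inf\{\I(v):v\ne0,\ \I'(v)=0\}$, so $0<m\le\I(u)=c_1$: positivity of $m$ follows because every nontrivial critical point $v$ satisfies, via its Nehari and Pohozaev identities, $(f2)$ and Sobolev, both $\int_{\R^N}|(-\Delta)^{s/2}v|^2\,dx\ge\delta_0^2>0$ and $\I(v)\ge\frac{s}{N}(1-\theta)\int_{\R^N}|(-\Delta)^{s/2}v|^2\,dx$ with $\theta<1$ from $(V4)$. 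A minimizing sequence of such critical points is bounded (same Pohozaev estimate) and is a Palais--Smale sequence for $\I$ at level $m<\min\{c_1^\infty,\frac{s}{N}S_*^{N/2s}\}$, so the splitting lemma forces strong convergence to a $\bar u$ with $\I'(\bar u)=0$ and $\I(\bar u)=m>0$; thus $\bar u\ne0$ is the desired ground state.

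The main obstacle I anticipate is the critical, non-autonomous splitting lemma: one must iterate Lions' concentration--compactness so as to peel off, in the right order, the fixed-frame weak limit, the profiles escaping to infinity (each a critical point of the autonomous limit functional) and the Sobolev bubbles at their own vanishing or diverging scales, all while keeping exact additivity of the energy; the nonlocality of $(-\Delta)^s$ makes the cut-off and rescaling estimates delicate, and it is precisely here that $(f1)$ with $\beta>\max\{0,1-2s\}$ and $(f5)$ are used, to control the nonlinear terms and to justify the Pohozaev identity. It is the two strict inequalities $c_\lambda<\frac{s}{N}S_*^{N/2s}\lambda^{-(N-2s)/2s}$ and $c_\lambda<c_\lambda^\infty$ that make the decomposition collapse to its first term; the latter is the genuinely new point of the non-autonomous case, resting on $(V3)$ and the hypothesis $V\not\equiv V_\infty$.
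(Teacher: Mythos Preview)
Your proposal is correct and follows the same Jeanjean--Tanaka strategy as the paper, but the compactness is organised differently. The paper does \emph{not} set up a full profile decomposition with explicit Sobolev bubbles: instead it first proves, under the sole energy threshold $c_\lambda<\frac{s}{N}S_*^{N/2s}\lambda^{-(N-2s)/2s}$, the softer statement $\I_\lambda(u_\lambda)\le c_\lambda$ (Lemma~\ref{lemma4.4}), and it obtains the comparison with the limit problem only \emph{a posteriori} and by contradiction (Lemma~\ref{lemma4.3}): assuming $u_\lambda=0$, the splitting forces $c_\lambda\ge\I_\lambda^\infty(w_\lambda)$ for a ground state $w_\lambda$ of $\I_\lambda^\infty$, and then the dilation path of Lemma~\ref{lemma4.1} gives $c_\lambda<\I_\lambda^\infty(w_\lambda)$, a contradiction. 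In particular the paper never claims strong convergence or $\I_\lambda(u_\lambda)=c_\lambda$; it only needs $u_\lambda\neq0$ and $\I_\lambda(u_\lambda)\le c_\lambda$.

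Your route---proving $c_\lambda<c_\lambda^\infty$ beforehand and then invoking a global splitting lemma that simultaneously rules out escaping profiles (energy $\ge c_\lambda^\infty$) and bubbles (energy $\ge\frac{s}{N}S_*^{N/2s}\lambda^{-(N-2s)/2s}$)---is cleaner and yields the stronger conclusion $u_n\to u_\lambda$ strongly with $\I_\lambda(u_\lambda)=c_\lambda$, at the price of a heavier decomposition lemma. The paper's route is more piecemeal but avoids ever extracting bubbles, because below the Sobolev threshold the ``vanishing'' alternative in the Lions dichotomy already forces $\|v_n^1\|\to0$. Both arguments ultimately rest on the same three ingredients you identify: the Pohozaev identity combined with $(V4)$ to get $\I_\lambda(u)\ge0$ at critical points and to bound $(u_{\lambda_n})$, the strict inequality $c_\lambda<\frac{s}{N}S_*^{N/2s}\lambda^{-(N-2s)/2s}$ from $(f4)$, and the strict inequality coming from $(V3)$ together with $V\not\equiv V_\infty$.
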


\noindent
To deal with the non-autonomous case, we resemble some ideas developed in \cite{JT}. We consider the previous  family of functionals $\I_{\lambda}$, and, since $\I_{\lambda}$ satisfies the assumptions of Theorem \ref{thm2.1}, we can deduce the existence of a Palais-Smale sequence $(u^{j}_{n})$ at the mountain-pass level $c_{\lambda_{j}}$, where $\lambda_{j}\rightarrow 1$. Therefore, $u_{n}^{j}\rightharpoonup u_{j}$ in $H^{s}(\R^{N})$ where $u_{j}$ is a critical point of $\I_{\lambda_{j}}$. This time, the boundedness of the  sequence $(u_{j})$ follows by the assumption $(V4)$. Moreover, we prove that $(u_{j})$ is a bounded Palais-Smale sequence of $\I$. To show that the bounded sequence $(u_{j})$ converges to a nontrivial weak solution of (\ref{P}), we show that $c_{1}$ is strictly less than the least energy level $m^{\infty}$ of the functional $\I^{\infty}$ associated to the "problem at infinity"
$$
(-\Delta)^{s} u+V_{\infty}u=  f(u) \mbox{ in } \R^{N}.
$$
Together with an accurate description of the sequence as a sum of translated critical points, this allows us to infer that $u_{j}\rightharpoonup u$ in $H^{s}(\R^{N})$, for some nontrivial critical point $u$ of $\I$.

\smallskip

\noindent
Let us recall that when $f$ is an odd function satisfying $(f1)$-$(f4)$, and $V$ is constant, the existence of a radial positive ground state to (\ref{P}) has been proved in \cite{A1} (see also \cite{AFS}) via a minimization argument and by working in the space of radial functions $H^{s}_{rad}(\R^{N})$, which is compactly embedded into $L^{p}(\R^{N})$ for all $p\in (2, 2^{*}_{s})$. 
Here, we present a different proof of this result (see Theorem \ref{thm1}) which is based on the global compactness lemma, which will be also useful to prove Theorem \ref{thm2}. 
In fact, we think that the global compactness lemma is not only interesting for the aim of this paper, but it can be also used to deal with other problems similar to (\ref{P}).
We also point out that by using the methods developed here, we are able to study (\ref{P}) dealing with radial and non-radial potentials in a unified approach.

\smallskip

\noindent
The plan of the paper is the following:  In section $2$ we collect some technical results which will be useful along the paper.
In section $3$ we use the monotonicity trick to prove Theorem \ref{thm1}. In section $4$ we give the proof of Theorem \ref{thm2}.

\section{Preliminaries and functional setting}

\noindent
In this section we give a few results that we are later going to use for the proofs of the main results. \\
For any $s\in (0,1)$ we define $\mathcal{D}^{s, 2}(\R^{N})$ as the completion of $C^{\infty}_{0}(\R^{N})$ with respect to
$$
[u]_{H^{s}(\R^{N})}^{2}=\iint_{\R^{2N}} \frac{|u(x)-u(y)|^{2}}{|x-y|^{N+2s}} \, dx \, dy =|(-\Delta)^{\frac{s}{2}} u|^{2}_{L^{2}(\R^{N})},
$$
that is
$$
\mathcal{D}^{s, 2}(\R^{N})=\left\{u\in L^{2^{*}_{s}}(\R^{N}): [u]_{H^{s}(\R^{N})}<\infty\right\}.
$$
Now, let us introduce the fractional Sobolev space
$$
H^{s}(\R^{N})= \left\{u\in L^{2}(\R^{N}) : \frac{|u(x)-u(y)|}{|x-y|^{\frac{N+2s}{2}}} \in L^{2}(\R^{2N}) \right \}
$$
endowed with the natural norm 
$$
\|u\|_{H^{s}(\R^{N})} = \sqrt{[u]_{H^{s}(\R^{N})}^{2} + |u|_{L^{2}(\R^{N})}^{2}}.
$$

\noindent
For the convenience of the reader we recall from \cite{DPV} the following:
\begin{thm}\label{Sembedding}
Let $s\in (0,1)$ and $N>2s$. Then there exists a sharp constant $S_{*}=S(N, s)>0$, whose exact value can be found in \cite{CT}, 
such that for any $u\in H^{s}(\R^{N})$
\begin{equation}\label{FSI}
|u|^{2}_{L^{2^{*}_{s}}(\R^{N})} \leq S^{-1}_{*} [u]^{2}_{H^{s}(\R^{N})}. 
\end{equation}
Moreover $H^{s}(\R^{N})$ is continuously embedded in $L^{q}(\R^{N})$ for any $q\in [2, 2^{*}_{s}]$ and compactly in $L^{q}_{loc}(\R^{N})$ for any $q\in [2, 2^{*}_{s})$. 
\end{thm}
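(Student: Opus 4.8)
The plan is to recover all three assertions from two classical ingredients: the Hardy--Littlewood--Sobolev (HLS) inequality, which yields the estimate \eqref{FSI} with a non-optimal constant, and a Riesz--Fréchet--Kolmogorov (equivalently, Kolmogorov--Riesz) compactness argument, which yields the local compact embedding; the continuous embeddings into $L^{q}$ for $q\in(2,2^{*}_{s})$ are then pure interpolation, and the \emph{sharp} value of $S_{*}$ is the one point I would import from \cite{CT}.

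\emph{Step 1: the fractional Sobolev inequality.} For $u\in C^{\infty}_{0}(\R^{N})$ set $g:=(-\Delta)^{s/2}u$, so that by Plancherel $[u]^{2}_{H^{s}(\R^{N})}=|g|^{2}_{L^{2}(\R^{N})}$ (the normalisation constant being absorbed in the definition of $(-\Delta)^{s}$). Since $\widehat{u}(\xi)=|\xi|^{-s}\widehat{g}(\xi)$, the function $u$ is, up to a dimensional constant, the Riesz potential of order $s$ of $g$, i.e. $u(x)=c_{N,s}\int_{\R^{N}}|x-y|^{-(N-s)}g(y)\,dy$. Applying HLS with the exponent pair $p=2$ and kernel order $N-s$ (note $0<s<N$ since $N>2s$) gives $|u|_{L^{r}(\R^{N})}\leq C\,|g|_{L^{2}(\R^{N})}$ with $\frac{1}{r}=\frac{1}{2}-\frac{s}{N}$, that is $r=2^{*}_{s}$. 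Hence $|u|^{2}_{L^{2^{*}_{s}}(\R^{N})}\leq C\,[u]^{2}_{H^{s}(\R^{N})}$, and this extends to all of $\mathcal{D}^{s,2}(\R^{N})$, a fortiori to $H^{s}(\R^{N})$, by density of $C^{\infty}_{0}(\R^{N})$. For the optimal constant $S_{*}=S(N,s)$ I would simply quote \cite{CT}: there the extremal problem is transported, via stereographic projection, to the Sobolev-type inequality on $S^{N}$ for the conformally covariant fractional operator, and a symmetric-decreasing rearrangement argument both identifies the extremals as the family $c\,(\mu^{2}+|x-x_{0}|^{2})^{-(N-2s)/2}$ and produces the explicit value of $S_{*}$.

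\emph{Step 2: continuous embeddings.} For $q=2$ the bound $|u|_{L^{2}(\R^{N})}\leq\|u\|_{H^{s}(\R^{N})}$ is immediate from the definition of the norm; for $q=2^{*}_{s}$ it follows from \eqref{FSI} and $[u]_{H^{s}(\R^{N})}\leq\|u\|_{H^{s}(\R^{N})}$. For $q\in(2,2^{*}_{s})$ pick $\theta\in(0,1)$ with $\frac{1}{q}=\frac{1-\theta}{2}+\frac{\theta}{2^{*}_{s}}$ and use Hölder's interpolation inequality
$$
|u|_{L^{q}(\R^{N})}\leq|u|^{1-\theta}_{L^{2}(\R^{N})}\,|u|^{\theta}_{L^{2^{*}_{s}}(\R^{N})}\leq C\,\|u\|_{H^{s}(\R^{N})}.
$$

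\emph{Step 3: local compactness.} Let $(u_{n})$ be bounded in $H^{s}(\R^{N})$ and fix $R>0$; I claim $(u_{n}|_{B_{R}})$ is relatively compact in $L^{q}(B_{R})$ for every $q\in[2,2^{*}_{s})$, after which a diagonal argument over $R\to\infty$ produces a subsequence converging in $L^{q}_{loc}(\R^{N})$. By Step 2 the sequence is bounded in $L^{q}(B_{R})$, so by the Riesz--Fréchet--Kolmogorov criterion it suffices to control translations uniformly. A direct Fourier computation gives, for every $h\in\R^{N}$,
$$
|u_{n}(\cdot+h)-u_{n}|^{2}_{L^{2}(\R^{N})}=\int_{\R^{N}}|e^{\imath\xi\cdot h}-1|^{2}|\widehat{u_{n}}(\xi)|^{2}\,d\xi\leq C\,|h|^{2s}\,[u_{n}]^{2}_{H^{s}(\R^{N})},
$$
using $|e^{\imath\xi\cdot h}-1|^{2}\leq C(|\xi||h|)^{2s}$. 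Interpolating this with the uniform bound $|u_{n}(\cdot+h)-u_{n}|_{L^{2^{*}_{s}}(\R^{N})}\leq 2\,|u_{n}|_{L^{2^{*}_{s}}(\R^{N})}\leq C$ (as in Step 2, with exponent $\theta<1$ precisely because $q<2^{*}_{s}$) yields $|u_{n}(\cdot+h)-u_{n}|_{L^{q}(\R^{N})}\leq C\,|h|^{s(1-\theta)}\to0$ as $|h|\to0$, uniformly in $n$; this also gives the uniform $q$-equi-integrability needed for the strictly subcritical range. An equivalent route is to bound $\|u_{n}\|_{W^{\sigma,q}(B_{R})}$ uniformly for some $\sigma\in(0,s)$ and invoke the compact embedding $W^{\sigma,q}(B_{R})\hookrightarrow\hookrightarrow L^{q}(B_{R})$ on bounded Lipschitz domains. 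Compactness genuinely fails on all of $\R^{N}$ by translation invariance, which is why only $L^{q}_{loc}$ appears.

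\emph{Main obstacle.} The soft parts above (HLS, interpolation, Kolmogorov--Riesz) are routine; the genuinely delicate point is the \emph{sharpness} of $S_{*}$, whose determination rests on the conformal/rearrangement analysis of \cite{CT} and is the only ingredient I would not reprove here. A minor care point is making the translation estimate of Step 3 uniform over the bounded set and combining it correctly with the subcritical interpolation exponent.
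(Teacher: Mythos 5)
Your argument is correct, but there is nothing in the paper to compare it against: Theorem \ref{Sembedding} is explicitly \emph{recalled} from \cite{DPV}, with the sharp constant and the characterization of extremals imported from \cite{CT}, and no proof is given in the text. What you supply is the standard self-contained route: the inequality \eqref{FSI} with a non-optimal constant via the Riesz-potential representation $u=c_{N,s}\,|x|^{s-N}\ast(-\Delta)^{s/2}u$ and Hardy--Littlewood--Sobolev (the exponent bookkeeping $1/2^{*}_{s}=1/2-s/N$ and the requirement $N>2s$ are handled correctly), density of $C^{\infty}_{0}$, interpolation between $L^{2}$ and $L^{2^{*}_{s}}$ for the continuous embeddings, and the Kolmogorov--Riesz criterion for local compactness, with the translation estimate $|u_{n}(\cdot+h)-u_{n}|_{L^{2}}\leq C|h|^{s}[u_{n}]_{H^{s}}$ obtained on the Fourier side and interpolated against the uniform $L^{2^{*}_{s}}$ bound (which is exactly where the strict inequality $q<2^{*}_{s}$ enters, giving $\theta<1$). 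This matches the proofs in \cite{DPV} in substance; the only point you correctly do not reprove is the sharpness of $S_{*}$ and the identification of the optimizers, which is the content of \cite{CT} and is likewise only quoted by the paper. One cosmetic caveat: the identity $[u]^{2}_{H^{s}}=|(-\Delta)^{s/2}u|^{2}_{L^{2}}$ holds only up to a dimensional constant $C(N,s)$ unless one fixes the normalization of $(-\Delta)^{s}$ accordingly, and the value of the sharp constant depends on that normalization, so when quoting \cite{CT} one should make sure the seminorm used there is the Gagliardo one appearing in \eqref{FSI}; you flag this, and it does not affect the argument.
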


\begin{remark}
The exact value of the best constant $S_{*}$ appearing in (\ref{FSI}), has been calculated explicitly in \cite{CT}. Moreover, the authors proved that the equality in (\ref{FSI}) holds if and only if 
$$
u(x)=c(\mu^{2}+(x-x_{0})^{2})^{-\frac{N-2s}{2}}
$$
where $c\in \R$, $\mu>0$ and $x_{0}\in \R^{N}$ are fixed constants.
\end{remark}

\noindent
Now, we give some technical lemmas. The first one is a compactness Lions-type lemma whose proof can be found in \cite{Secchi1}. 
\begin{lem}\cite{Secchi1}\label{lions lemma}
Let $N>2s$ and $r\in [2, 2^{*}_{s})$. If $(u_{n})$ is a bounded sequence in $H^{s}(\R^{N})$ and if
$$
\lim_{n \rightarrow \infty} \sup_{y\in \R^{N}} \int_{B_{R}(y)} |u_{n}|^{r} dx=0
$$
where $R>0$,
then $u_{n}\rightarrow 0$ in $L^{t}(\R^{N})$ for all $t\in (2, 2^{*}_{s})$.
\end{lem}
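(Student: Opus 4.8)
The plan is to combine a bounded-overlap covering of $\R^{N}$ by balls of radius $R$ with the fractional Sobolev inequality localised on a single ball, after two elementary reductions. First I would reduce to the exponent $r=2$: for every $y\in\R^{N}$, since $B_{R}(y)$ has finite measure and $r\geq 2$, Hölder's inequality gives
$$
\int_{B_{R}(y)} |u_{n}|^{2}\,dx \leq |B_{R}|^{1-\frac{2}{r}}\left(\int_{B_{R}(y)} |u_{n}|^{r}\,dx\right)^{\frac{2}{r}},
$$
so that $\sup_{y\in\R^{N}}\int_{B_{R}(y)}|u_{n}|^{2}\,dx\to 0$ as $n\to\infty$. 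Second, I would reduce to proving the conclusion for a single well-chosen exponent $t_{0}\in(2,2^{*}_{s})$: once $u_{n}\to 0$ in $L^{t_{0}}(\R^{N})$ is known, the interpolation inequality together with the boundedness of $(u_{n})$ in $L^{2}(\R^{N})$ and — by Theorem \ref{Sembedding} — in $L^{2^{*}_{s}}(\R^{N})$ yields, for $t\in(2,t_{0})$, $|u_{n}|_{L^{t}(\R^{N})}\leq|u_{n}|_{L^{2}(\R^{N})}^{1-\nu}|u_{n}|_{L^{t_{0}}(\R^{N})}^{\nu}\to 0$, and for $t\in(t_{0},2^{*}_{s})$ the analogous bound obtained by interpolating between $L^{t_{0}}(\R^{N})$ and $L^{2^{*}_{s}}(\R^{N})$.

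The exponent I would pick is $t_{0}=2+\tfrac{4s}{N}$, which indeed lies in $(2,2^{*}_{s})$. Cover $\R^{N}$ by balls $B_{R}(y_{i})$, $i\in\N$, centred at the points of a sufficiently fine lattice, so that $\bigcup_{i}B_{R}(y_{i})=\R^{N}$ and no point of $\R^{N}$ belongs to more than $\kappa=\kappa(N)$ of them. On each ball, the fractional Sobolev embedding $H^{s}(B_{R})\hookrightarrow L^{2^{*}_{s}}(B_{R})$ (valid since $B_{R}$ is a bounded Lipschitz domain, cf. \cite{DPV}) combined with the interpolation inequality $|u|_{L^{t_{0}}(B_{R})}\leq|u|_{L^{2}(B_{R})}^{1-\sigma}|u|_{L^{2^{*}_{s}}(B_{R})}^{\sigma}$, $\tfrac{1}{t_{0}}=\tfrac{1-\sigma}{2}+\tfrac{\sigma}{2^{*}_{s}}$, yields a constant $C=C(N,s,R)$, independent of $i$ by translation invariance, such that
$$
\int_{B_{R}(y_{i})} |u_{n}|^{t_{0}}\,dx \leq C\left(\int_{B_{R}(y_{i})} |u_{n}|^{2}\,dx\right)^{\frac{t_{0}-2}{2}}\|u_{n}\|^{2}_{H^{s}(B_{R}(y_{i}))}.
$$
The whole point of the choice of $t_{0}$ is that the exponent of $\|u_{n}\|_{H^{s}(B_{R}(y_{i}))}$ above is exactly $2$ — this is the value of $\sigma t_{0}$ precisely when $t_{0}=2+\tfrac{4s}{N}$ — because it is under this power that the localised energies are additive over the covering: by the bounded overlap, $\sum_{i}\|u_{n}\|^{2}_{H^{s}(B_{R}(y_{i}))}\leq\kappa\,\|u_{n}\|^{2}_{H^{s}(\R^{N})}$. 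Summing the displayed inequality over $i$, bounding the left-hand side from below by $\int_{\R^{N}}|u_{n}|^{t_{0}}\,dx$ and the right-hand side as just explained, I obtain
$$
\int_{\R^{N}} |u_{n}|^{t_{0}}\,dx \leq C\kappa\,\|u_{n}\|^{2}_{H^{s}(\R^{N})}\left(\sup_{y\in\R^{N}}\int_{B_{R}(y)}|u_{n}|^{2}\,dx\right)^{\frac{t_{0}-2}{2}},
$$
and the right-hand side tends to $0$ since $(u_{n})$ is bounded in $H^{s}(\R^{N})$ while the supremum vanishes by the first reduction. Hence $u_{n}\to 0$ in $L^{t_{0}}(\R^{N})$, and therefore in $L^{t}(\R^{N})$ for every $t\in(2,2^{*}_{s})$.

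I expect the delicate point to be exactly this summation step. A naive interpolation directly between the vanishing local $L^{2}$ norm and the local $L^{2^{*}_{s}}$ norm does not close: after summing over the covering one is left with a series of the form $\sum_{i}\big(\int_{B_{R}(y_{i})}|u_{n}|^{2^{*}_{s}}\,dx\big)^{2/2^{*}_{s}}$, and since the exponent $2/2^{*}_{s}<1$ is subadditive this series need not be finite, even though $\sum_{i}\int_{B_{R}(y_{i})}|u_{n}|^{2^{*}_{s}}\,dx$ is bounded. The remedy is to trade the local $L^{2^{*}_{s}}$ norm for the full local $H^{s}(B_{R})$ norm via the Sobolev embedding on the ball — which then enters to the power $2$, the power under which seminorms are additive over a finite-overlap covering — and to calibrate $t_{0}$ so that the interpolation exponents fall into place. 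The remaining ingredients (Hölder on a finite-measure ball, the interpolation inequalities, and the construction of a bounded-overlap covering) are routine.
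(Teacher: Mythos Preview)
Your argument is correct and is essentially the standard Lions-type proof adapted to $H^{s}(\R^{N})$: reduction to $r=2$ by H\"older, covering by balls with bounded overlap, local Sobolev embedding plus interpolation with the exponent $t_{0}=2+\tfrac{4s}{N}$ chosen so that the local $H^{s}$-norm appears to the second power, and summation. The exponent computation checks out ($\sigma t_{0}=2$ and $(1-\sigma)t_{0}=t_{0}-2$ are equivalent, and the interpolation relation $\tfrac{1}{t_{0}}=\tfrac{1-\sigma}{2}+\tfrac{\sigma}{2^{*}_{s}}$ forces $t_{0}=2+\tfrac{4s}{N}$), and the bounded-overlap estimate $\sum_{i}\|u_{n}\|_{H^{s}(B_{R}(y_{i}))}^{2}\leq\kappa\|u_{n}\|_{H^{s}(\R^{N})}^{2}$ is fine since $\sum_{i}\chi_{B_{R}(y_{i})\times B_{R}(y_{i})}(x,y)\leq\sum_{i}\chi_{B_{R}(y_{i})}(x)\leq\kappa$ pointwise.

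Note, however, that the paper does not give its own proof of this lemma: it simply states the result and refers to \cite{Secchi1}. What you have written is precisely the argument one finds there (and in Lions' original paper for $s=1$), so there is nothing to compare --- you have supplied what the paper outsources.
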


\noindent
Next, we prove the following useful result:
\begin{lem}\label{willem}
If $u_{n}\rightharpoonup u$ in $\mathcal{D}^{s, 2}(\R^{N})$ and $u\in L^{\infty}_{loc}(\R^{N})$, then
\begin{align*}
|u_{n}|^{2^{*}_{s}-2}u_{n} - |u_{n}-u|^{2^{*}_{s}-2} (u_{n}-u) \rightarrow  |u|^{2^{*}_{s}-2}u \quad \mbox{ in } (\mathcal{D}^{s, 2}(\R^{N}))'.
\end{align*} 
\end{lem}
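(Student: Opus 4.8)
The plan is to recognize this as a Brezis--Lieb type statement and to reduce it to norm convergence in the Lebesgue space $L^{(2^{*}_{s})'}(\R^{N})$, where $(2^{*}_{s})'=\frac{2N}{N+2s}$ denotes the conjugate exponent of $2^{*}_{s}$. Writing $g(t):=|t|^{2^{*}_{s}-2}t$ and
$$
w_{n}:=g(u_{n})-g(u_{n}-u)-g(u),
$$
it is enough to prove $|w_{n}|_{L^{(2^{*}_{s})'}(\R^{N})}\to 0$: indeed, by Hölder's inequality and the embedding \eqref{FSI},
$$
\left|\int_{\R^{N}}w_{n}\,\varphi\,dx\right|\le |w_{n}|_{L^{(2^{*}_{s})'}(\R^{N})}\,|\varphi|_{L^{2^{*}_{s}}(\R^{N})}\le S_{*}^{-1/2}\,|w_{n}|_{L^{(2^{*}_{s})'}(\R^{N})}\,[\varphi]_{H^{s}(\R^{N})}
$$
for every $\varphi\in\mathcal{D}^{s, 2}(\R^{N})$, so that $\|w_{n}\|_{(\mathcal{D}^{s, 2}(\R^{N}))'}\le S_{*}^{-1/2}\,|w_{n}|_{L^{(2^{*}_{s})'}(\R^{N})}$.

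As a preliminary step I would extract pointwise information. Since $u_{n}\rightharpoonup u$ in $\mathcal{D}^{s, 2}(\R^{N})$, the sequence $(u_{n})$ is bounded in $L^{2^{*}_{s}}(\R^{N})$ and $([u_{n}]_{H^{s}(\R^{N})})$ is bounded; hence $(u_{n})$ is bounded in $H^{s}(B_{R})$ for every $R>0$, and the compact embedding in Theorem \ref{Sembedding} together with a diagonal argument allows us to assume, up to a subsequence, that $u_{n}\to u$ a.e. in $\R^{N}$, and therefore $w_{n}\to 0$ a.e. in $\R^{N}$ by continuity of $g$. At this point only $u\in L^{2^{*}_{s}}(\R^{N})$ — which is automatic from $u\in\mathcal{D}^{s, 2}(\R^{N})$ — seems to be needed in what follows, the hypothesis $u\in L^{\infty}_{loc}(\R^{N})$ being a convenient feature of the applications, where $u$ will be a solution of \eqref{P}.

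The analytic core is the elementary inequality: for every $\varepsilon>0$ there exists $C_{\varepsilon}>0$ such that
$$
\big|g(a+b)-g(a)-g(b)\big|\le \varepsilon\,|a|^{2^{*}_{s}-1}+C_{\varepsilon}\,|b|^{2^{*}_{s}-1}\qquad\text{for all } a,b\in\R,
$$
which follows from the $(2^{*}_{s}-1)$-homogeneity and continuity of the left-hand side together with the fact that it vanishes for $b=0$. Applying it with $a=u_{n}-u$, $b=u$ and setting $h_{n}^{\varepsilon}:=\big(|w_{n}|-\varepsilon|u_{n}-u|^{2^{*}_{s}-1}\big)^{+}$, one gets $0\le h_{n}^{\varepsilon}\le |w_{n}|\to 0$ a.e. and $0\le h_{n}^{\varepsilon}\le C_{\varepsilon}|u|^{2^{*}_{s}-1}$, the dominating function lying in $L^{(2^{*}_{s})'}(\R^{N})$ and being independent of $n$; hence $|h_{n}^{\varepsilon}|_{L^{(2^{*}_{s})'}(\R^{N})}\to 0$ by dominated convergence. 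From $|w_{n}|\le h_{n}^{\varepsilon}+\varepsilon|u_{n}-u|^{2^{*}_{s}-1}$, the boundedness of $(u_{n}-u)$ in $L^{2^{*}_{s}}(\R^{N})$, and then letting $\varepsilon\to 0^{+}$, we obtain $|w_{n}|_{L^{(2^{*}_{s})'}(\R^{N})}\to 0$ along the subsequence; a standard subsequence argument upgrades this to the full sequence, which completes the proof. I expect the main obstacle to be exactly that $|w_{n}|$ cannot be dominated by a single $n$-independent function in $L^{(2^{*}_{s})'}(\R^{N})$ — on the set where $u_{n}$ is large while $u$ is small, $|w_{n}|$ is not controlled by $|u|^{2^{*}_{s}-1}$ alone, so a naive dominated convergence fails — and it is the free parameter $\varepsilon$ in the algebraic inequality, combined with the Brezis--Lieb type truncation $h_{n}^{\varepsilon}$ and the a priori $L^{2^{*}_{s}}$-bound on $(u_{n})$, that circumvents it.
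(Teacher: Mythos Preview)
Your proof is correct and proceeds via the classical Brezis--Lieb $\varepsilon$-truncation scheme, reducing the assertion to $w_n\to 0$ in $L^{(2^*_s)'}(\R^N)$ and then invoking duality together with the Sobolev inequality; as you note, this route never actually uses the hypothesis $u\in L^\infty_{loc}(\R^N)$. The paper argues differently: it tests against $w\in C^\infty_c(\R^N)$ and splits the integral into $\{|x|>R\}$ and $B_R$. The tail is handled with H\"older and the decay of $\int_{|x|>R}|u|^{2^*_s}$, while on $B_R$ the assumption $u\in L^\infty_{loc}$ is used essentially: setting $M=\sup_{B_R}|u|$, the mean-value bound $|\Psi(u_n)-\Psi(u_n-u)|\le C(|u_n|+M)^{2^*_s-2}$ together with a carefully chosen exponent $\beta\in\bigl(\max\{\tfrac{2N}{N+2s},\tfrac{N-2s}{4s}\},\tfrac{N}{2s}\bigr)$ allows one to invoke continuity of the Nemytskii map $L^{\alpha}(B_R)\to L^{\beta}(B_R)$ with $\alpha=(2^*_s-2)\beta\in(1,2^*_s)$. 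Your argument is both shorter and strictly more general; the paper's decomposition, by contrast, makes explicit the dichotomy between the compact region (strong local convergence) and the tail (smallness of $u$), and exploits the extra hypothesis in a place where it is anyway available in the later applications.
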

\begin{proof}
Let $\Psi(u)=|u|^{2^{*}_{s}-2}u$ and take $w\in C^{\infty}_{c}(\R^{N})$.
By the mean value theorem, we can see that
$$
|\Psi(u_{n})-\Psi(u_{n}-u)|\leq (2^{*}_{s}-1)[|u_{n}|+|u|]^{2^{*}_{s}-2}|u| \mbox{ a.e. in } \R^{N}.
$$
Then, the H\"older inequality and the Sobolev embedding yield
\begin{align*}
\left|\int_{|x|>R} [\Psi(u_{n})-\Psi(u_{n}-u)]w \, dx\right|\leq C_{1}[|u_{n}|^{2^{*}_{s}-2}_{L^{2^{*}_{s}}(\R^{N})}+|u|^{2^{*}_{s}-2}_{L^{2^{*}_{s}}(\R^{N})}] |w|_{L^{2^{*}_{s}}(\R^{N})} \left(\int_{|x|>R} |u|^{2^{*}_{s}} \, dx\right)^{\frac{1}{2^{*}_{s}}}
\end{align*}
and
\begin{align*}
\left|\int_{|x|>R} \Psi(u)w\, dx\right|\leq |w|_{L^{2^{*}_{s}}(\R^{N})} \left(\int_{|x|>R} |u|^{2^{*}_{s}} \, dx\right)^{\frac{2^{*}_{s}-1}{2^{*}_{s}}}\leq C [w]_{H^{s}(\R^{N})} \left(\int_{|x|>R} |u|^{2^{*}_{s}} \, dx\right)^{\frac{2^{*}_{s}-1}{2^{*}_{s}}}. 
\end{align*}
As a consequence, for any $\varepsilon>0$ there exists $R>0$ such that
\begin{equation}\label{willem1}
\left|\int_{|x|>R} [\Psi(u_{n})-\Psi(u_{n}-u)-\Psi(u)]w\, dx\right|\leq \varepsilon [w]_{H^{s}(\R^{N})}.
\end{equation}
Let us define $M=\sup_{B_{R}}|u|$, so that on $B_{R}$ we get
\begin{equation}\label{Nem}
|\Psi(u_{n})-\Psi(u_{n}-u)|\leq (2^{*}_{s}-1)[|u_{n}|+M]^{2^{*}_{s}-2}M.
\end{equation}
Now, fix $\beta> 1$ such that 
$$
\beta\in \Bigl(\max\left\{\frac{2N}{N+2s}, \frac{N-2s}{4s}\right\}, \frac{N}{2s}\Bigr)
$$
and we set $\alpha:=(2^{*}_{s}-2)\beta=\frac{4s}{N-2s}\beta$.
Then, (\ref{Nem}) becomes
\begin{equation}\label{Nemio}
|\Psi(u_{n})-\Psi(u_{n}-u)|\leq (2^{*}_{s}-1)[|u_{n}|+M]^{\frac{\alpha}{\beta}}M.
\end{equation}
We note that
$$
\alpha> 1 \mbox{ since } \beta> \frac{N-2s}{4s},
$$
and
$$
\alpha<2^{*}_{s} \mbox{ since } \beta< \frac{N}{2s}.
$$
Thus, it follows from the compact embedding $H^{s}_{loc}(\R^{N})\subset L^{\alpha}_{loc}(\R^{N})$ and the properties of Nemytskii operators \cite{W} that 
$$
\Psi(u_{n})-\Psi(u_{n}-u)\rightarrow \Psi(u) \mbox{ in } L^{\beta}(B_{R}).
$$
Hence, by using the H\"older inequality and the Sobolev embedding we can see that 
\begin{align}\label{willem2}
\left|\int_{B_{R}} [\Psi(u_{n})-\Psi(u_{n}-u)-\Psi(u)]w \, dx\right| &\leq \left(\int_{B_{R}} |w|^{\frac{\beta}{\beta-1}} \, dx \right)^{\frac{\beta-1}{\beta}} \left(\int_{B_{R}}|\Psi(u_{n})-\Psi(u_{n}-u)-\Psi(u)|^{\beta} \, dx\right)^{\frac{1}{\beta}} \nonumber\\
&\leq C [w]_{H^{s}(\R^{N})} \left(\int_{B_{R}}|\Psi(u_{n})-\Psi(u_{n}-u)-\Psi(u)|^{\beta} \, dx\right)^{\frac{1}{\beta}} \rightarrow 0.
\end{align}
where in the last inequality we have used the fact that $\frac{\beta}{\beta-1}< 2^{*}_{s}$ because of $\beta> \frac{2N}{N+2s}$.\\
Putting together (\ref{willem1}) and (\ref{willem2}) we obtain the assert.
\end{proof}

\noindent
Finally we recall the following well-known results:
\begin{lem}\cite{BL1}\label{strauss}
Let $P$ and $Q:\R\rightarrow\R$ be a continuous functions satisfying
\begin{equation*}
\lim_{t\rightarrow +\infty} \frac{P(t)}{Q(t)}=0,
\end{equation*}
$\{v_{n}\}_{n}$, $v$ and $w$ be measurable functions from $\R^{N}$ to $\R$, with $w$ bounded, such that 
\begin{align*}
&\sup_{n\in \N} \int_{\R^{N}} |Q(v_{n}(x)) w| \, dx <+\infty, \\
&P(v_{n}(x))\rightarrow v(x) \mbox{ a.e. in } \R^{N}. 
\end{align*}
Then $|(P(v_{n})-v)w|_{L^{1}(\mathcal{B})}\rightarrow 0$, for any bounded Borel set $\mathcal{B}$. \\
Moreover, if we have also
\begin{equation*}
\lim_{t\rightarrow 0} \frac{P(t)}{Q(t)}=0,
\end{equation*}
and
\begin{equation*}
\lim_{|x| \rightarrow \infty} \sup_{n\in \N} |v_{n}(x)|=0,
\end{equation*}
then $|(P(v_{n})-v)w|_{L^{1}(\R^{N})}\rightarrow 0$.
\end{lem}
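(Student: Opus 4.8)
The plan is to follow the classical argument from \cite{BL1}, the two main tools being Egorov's theorem and Fatou's lemma. For the first (local) assertion, I would fix $\varepsilon>0$ and use the hypothesis $P(t)/Q(t)\to 0$ at infinity to produce $M=M_\varepsilon>0$ with $|P(t)|\le\varepsilon|Q(t)|$ for $|t|\ge M$; setting $C_M:=\max_{|t|\le M}|P(t)|$ this gives the pointwise bound $|P(t)|\le C_M+\varepsilon|Q(t)|$ for all relevant $t$. Evaluating at $v_n$ and using $\sup_n\int_{\R^N}|Q(v_n)w|\,dx=:C<\infty$ together with $w\in L^\infty$, one gets $\sup_n\int_{\mathcal{B}}|P(v_n)w|\,dx<\infty$ on any bounded Borel set $\mathcal{B}$, and hence, since $P(v_n)\to v$ a.e., that $|vw|\in L^1(\mathcal{B})$ by Fatou.

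Next I would invoke Egorov's theorem on the finite-measure set $\mathcal{B}$: for any $\delta>0$ there is a measurable $E\subset\mathcal{B}$ with $|\mathcal{B}\setminus E|<\delta$ on which $P(v_n)\to v$ uniformly, so that $\int_E|(P(v_n)-v)w|\,dx\le|w|_\infty|\mathcal{B}|\,\|P(v_n)-v\|_{L^\infty(E)}\to 0$. On the complement I would split $|(P(v_n)-v)w|\le|P(v_n)w|+|vw|$, estimate $\int_{\mathcal{B}\setminus E}|P(v_n)w|\,dx\le\varepsilon C+C_M|w|_\infty\delta$ from the first paragraph, and control $\int_{\mathcal{B}\setminus E}|vw|\,dx$ by absolute continuity of the integral. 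The crucial point is the order of the quantifiers: $\varepsilon$ is chosen first (which freezes $M$ and $C_M$), and only afterwards is $\delta=\delta(\varepsilon)$ taken small enough; then letting $n\to\infty$ and finally $\varepsilon\to 0$ closes the argument.

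For the global assertion I would additionally use $P(t)/Q(t)\to 0$ as $t\to 0$ to pick $\eta>0$ with $|P(t)|\le\varepsilon|Q(t)|$ for $|t|\le\eta$, and the hypothesis $\sup_n|v_n(x)|\to 0$ as $|x|\to\infty$ to find $R>0$ such that $|v_n(x)|\le\eta$ for all $n$ whenever $|x|\ge R$. Then $\int_{|x|\ge R}|P(v_n)w|\,dx\le\varepsilon\int_{\R^N}|Q(v_n)w|\,dx\le\varepsilon C$, hence $\int_{|x|\ge R}|vw|\,dx\le\varepsilon C$ by Fatou, so the tail contributes at most $2\varepsilon C$; the ball $B_R(0)$ is handled by the first part applied with $\mathcal{B}=B_R(0)$. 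Combining the two estimates and sending $\varepsilon\to 0$ yields $\int_{\R^N}|(P(v_n)-v)w|\,dx\to 0$.

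I expect the main obstacle to be exactly the region where $|v_n|$ is large: there $P(v_n)$ admits no pointwise control, only the integral bound through $|Q(v_n)w|$, which is finite but not small on a set of small measure. Egorov's theorem is what lets one isolate the ``uniformly convergent'' bulk from a small-measure remainder, and getting the dependence $\delta=\delta(\varepsilon)$ right (after $M_\varepsilon$, hence $C_M$, has been fixed) is the one place where a little care is needed; the rest is routine measure theory.
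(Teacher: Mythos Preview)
The paper does not prove this lemma: it is quoted verbatim from \cite{BL1} and stated without proof, as one of the ``well-known results'' recalled in Section~2. So there is no proof in the paper to compare against. Your argument is the standard Berestycki--Lions proof (Egorov on the bounded set to isolate a uniformly convergent bulk, the pointwise bound $|P(t)|\le C_M+\varepsilon|Q(t)|$ to control the small-measure remainder, and the two-sided decay of $P/Q$ together with the uniform spatial decay of $v_n$ to handle the tail in the global case), and it is correct; the order of quantifiers you highlight is exactly the delicate point in the original proof.
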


\begin{lem}\cite{BL}\label{Brezis-Lieb}
Let $\Omega$ be an open set in $\R^{N}$ and let $(u_{n})\subset L^{p}(\R^{N})$, with $1\leq p<\infty$.
If
\begin{compactenum}[(i)]
\item $(u_{n})$ is bounded in $L^{p}(\R^{N})$,
\item $u_{n}\rightarrow u$ almost everywhere in $\Omega$,
\end{compactenum}
then
\begin{align*}
\lim_{n\rightarrow \infty} (|u_{n}|_{L^{p}(\Omega)}^{p}-|u_{n}-u|_{L^{p}(\Omega)}^{p})=|u|_{L^{p}(\Omega)}^{p}.
\end{align*}
\end{lem}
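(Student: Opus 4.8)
The plan is to follow the classical argument of Brezis and Lieb, which rests on one elementary pointwise inequality together with the dominated convergence theorem. As a preliminary remark, since $(u_{n})$ is bounded in $L^{p}(\R^{N})$ and $u_{n}\rightarrow u$ a.e.\ in $\Omega$, Fatou's lemma gives $|u|_{L^{p}(\Omega)}^{p}\leq \liminf_{n} |u_{n}|_{L^{p}(\Omega)}^{p}<\infty$, so that $u\in L^{p}(\Omega)$; extending $u$ by zero outside $\Omega$ we may also regard $u\in L^{p}(\R^{N})$, and hence $\sup_{n}|u_{n}-u|_{L^{p}(\Omega)}^{p}=:C<\infty$. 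The heart of the matter is the inequality: for every $\varepsilon>0$ there is $C_{\varepsilon}>0$ such that
$$
\bigl||a+b|^{p}-|a|^{p}\bigr|\leq \varepsilon\,|a|^{p}+C_{\varepsilon}\,|b|^{p}\qquad\text{for all }a,b\in\R,
$$
which follows from the convexity-type bound $|a+b|^{p}\leq(1+\delta)|a|^{p}+C_{\delta}|b|^{p}$ (and the symmetric one, writing $a=(a+b)-b$) upon choosing $\delta$ small in terms of $\varepsilon$, or by a homogeneity/scaling argument reducing to the behaviour of $t\mapsto\bigl||t|+1\bigr|^{p}-|t|^{p}$ as $t\to\pm\infty$.

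Next I would apply this inequality with $a=u_{n}-u$ and $b=u$, and set
$$
f_{n}:=\Bigl||u_{n}|^{p}-|u_{n}-u|^{p}-|u|^{p}\Bigr|,
$$
obtaining the pointwise bound $f_{n}\leq \varepsilon\,|u_{n}-u|^{p}+(C_{\varepsilon}+1)\,|u|^{p}$ a.e.\ in $\Omega$. Introduce the truncated functions $W_{n}^{\varepsilon}:=\bigl(f_{n}-\varepsilon\,|u_{n}-u|^{p}\bigr)^{+}$. Then $0\leq W_{n}^{\varepsilon}\leq(C_{\varepsilon}+1)\,|u|^{p}\in L^{1}(\Omega)$, and $W_{n}^{\varepsilon}\rightarrow 0$ a.e.\ in $\Omega$, because $f_{n}\rightarrow 0$ a.e.\ in $\Omega$ by $u_{n}\rightarrow u$ a.e.\ and continuity of $t\mapsto|t|^{p}$. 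The dominated convergence theorem then yields $\int_{\Omega}W_{n}^{\varepsilon}\,dx\rightarrow 0$.

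Finally, since $f_{n}\leq W_{n}^{\varepsilon}+\varepsilon\,|u_{n}-u|^{p}$, integration over $\Omega$ gives
$$
\int_{\Omega}f_{n}\,dx\leq\int_{\Omega}W_{n}^{\varepsilon}\,dx+\varepsilon\int_{\Omega}|u_{n}-u|^{p}\,dx\leq\int_{\Omega}W_{n}^{\varepsilon}\,dx+\varepsilon\,C,
$$
so $\limsup_{n\to\infty}\int_{\Omega}f_{n}\,dx\leq\varepsilon\,C$; letting $\varepsilon\to 0$ we conclude $\int_{\Omega}f_{n}\,dx\rightarrow 0$, which implies the stated limit $|u_{n}|_{L^{p}(\Omega)}^{p}-|u_{n}-u|_{L^{p}(\Omega)}^{p}\rightarrow|u|_{L^{p}(\Omega)}^{p}$. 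The only mildly delicate point is the uniform domination: one cannot bound $f_{n}$ directly by a fixed $L^{1}$ function, since $|u_{n}-u|^{p}$ need not be so dominated, and this is precisely why one truncates by the positive part $W_{n}^{\varepsilon}$ and exploits the freedom to take $\varepsilon$ arbitrarily small. Everything else is a routine application of Fatou's lemma and dominated convergence.
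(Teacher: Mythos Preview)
Your argument is correct and is essentially the original proof of Brezis and Lieb. Note, however, that the paper does not supply its own proof of this lemma: it is stated as a cited result from \cite{BL} and used as a black box, so there is no ``paper's proof'' to compare against. What you have written is precisely the classical argument, with the standard truncation trick $W_{n}^{\varepsilon}=(f_{n}-\varepsilon|u_{n}-u|^{p})^{+}$ to manufacture an $L^{1}$ dominant; nothing is missing.
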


\begin{remark}
In order to simplify the notation in what follows, with $|\cdot|_{q}$ we will always denote the norm of $u\in L^{q}(\R^{N})$.
\end{remark}

\section{Ground state solution when the potential $V$ is constant}
\noindent
In this section we provide the proof of Theorem \ref{thm1}. 
Since we look for positive solution of (\ref{P}), we can suppose that $f(t)=0$ for $t\leq 0$. For simplicity, we also take $K=1$ in $(f3)$.\\
Let $H=\{u\in H^{s}(\R^{N}): \int_{\R^{N}} V(x) u^{2} dx<\infty \}$ endowed with the norm
$$
\|u\|^{2}=\int_{\R^{N}} \left(|(-\Delta)^{\frac{s}{2}}u|^{2}+V(x) u^{2}\right) \,dx.
$$
By using the assumptions $(V2)$ and $(V3)$, it is easy to prove that $\| \cdot \|$ is equivalent to the standard norm in $H^{s}(\R^{N})$.
In order to study weak solutions to (\ref{P}), we look for critical points of the following functional
$$
\I(u)=\frac{1}{2}\|u\|^{2}-\int_{\R^{N}} F(u) dx.
$$
For $\lambda \in [\frac{1}{2}, 1]$, we consider the family of functionals
$$
\I_{\lambda}(u)=\frac{1}{2}\|u\|^{2}-\lambda \int_{\R^{N}} F(u) dx
$$
defined for all $u\in H$.
 
By Theorem \ref{Sembedding} and assumptions on $f$, it is clear that $\I_{\lambda}$ is well defined, $\I_{\lambda}\in C^{1}(H, \R)$ and that its differential is given by  
$$
\langle \I'_{\lambda}(u), \varphi \rangle=\iint_{\R^{2N}} \frac{(u(x)-u(y))}{|x-y|^{N+2s}} (\varphi(x)-\varphi(y)) \, dx dy+\int_{\R^{N}} V(x) u \varphi \, dx-\lambda \int_{\R^{N}} f(u) \varphi \, dx
$$
for any $u, \varphi \in H$.

\noindent
Now, we prove that $\I_{\lambda}$ satisfies the assumptions of Theorem \ref{thm2.1}.
\begin{lem}\label{lemma2.2}
Assume $(V1)$-$(V2)$ and $(f1)$-$(f4)$.  
Then, for almost every $\lambda\in [\frac{1}{2}, 1]$, there is a sequence $(u_{n})\subset H$ such that
\begin{compactenum}[(i)]
\item $(u_{n})$ is bounded;
\item $\I_{\lambda}(u_{n})\rightarrow c_{\lambda}=\inf_{\gamma \in \Gamma} \max_{t\in [0, 1]} \I_{\lambda}(\gamma(t))$
where 
$
\Gamma=\{\gamma\in C([0, 1], H): \gamma(0)=0, \gamma(1)=v_{2}\}
$
for some $v_{2}\in H\setminus\{0\}$ such that $\I_{\lambda}(v_{2})<0$ for all $\lambda \in [\frac{1}{2}, 1]$;
\item $\I'_{\lambda}(u_{n})\rightarrow 0$ on $H^{-1}$.
\end{compactenum}
Moreover, if $V(x)\in L^{\infty}(\R^{N})$, then
\begin{equation}\label{2.3}
c_{\lambda}<\frac{s}{N}\frac{S_{*}^{\frac{N}{2s}}}{\lambda^{\frac{N-2s}{2s}}}.
\end{equation}
\end{lem}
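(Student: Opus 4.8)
The plan is to prove the two assertions separately: items (i)--(iii) will follow almost at once from the abstract Theorem \ref{thm2.1}, while the strict bound (\ref{2.3}) is the substantive part and will be obtained by testing the mountain pass level against a truncated Aubin--Talenti extremal and using $(f4)$.

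First I would put $\I_\lambda$ in the form required by Theorem \ref{thm2.1}, with $A(u)=\frac12\|u\|^2$, $B(u)=\int_{\R^N}F(u)\,dx$ and $J=[\frac12,1]$. Here $B\ge0$ on $H$: indeed $f\equiv0$ on $(-\infty,0]$ and, by $(f4)$ with $K=1$, $f(t)\ge t^{2^*_s-1}+Dt^{q-1}\ge0$ for $t\ge0$, so $F\ge0$; also $A(u)\to\infty$ as $\|u\|\to\infty$. For the geometry, $(f2)$--$(f3)$ give, for every $\eta>0$, a constant $C_\eta$ with $|F(t)|\le\frac\eta2 t^2+C_\eta|t|^{2^*_s}$; hence, by $(V2)$ and (\ref{FSI}), $\I_\lambda(u)\ge\frac12(1-\eta/V_0)\|u\|^2-C\|u\|^{2^*_s}$ for all $\lambda\in J$, and choosing $\eta$ small yields $\rho,\alpha>0$, independent of $\lambda$, with $\I_\lambda\ge\alpha$ on $\{\|u\|=\rho\}$. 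For the far endpoint I would fix, once and for all, a nonnegative function $u_{\varepsilon_0}$ obtained by truncating an extremal of (\ref{FSI}) (the parameter $\varepsilon_0>0$ will be chosen small in the last step); it belongs to $H$ since it is bounded with compact support and $V$ is continuous. Using $(f4)$ again, $\int_{\R^N}F(\tau u_{\varepsilon_0})\,dx\ge\frac{\tau^{2^*_s}}{2^*_s}|u_{\varepsilon_0}|_{2^*_s}^{2^*_s}+\frac{D\tau^q}{q}|u_{\varepsilon_0}|_q^q$, and since $2^*_s,q>2$ this gives $\I_\lambda(\tau u_{\varepsilon_0})\le\I_{1/2}(\tau u_{\varepsilon_0})\to-\infty$ as $\tau\to+\infty$; so for $T$ large the function $v_2:=Tu_{\varepsilon_0}$ satisfies $\I_\lambda(v_2)<0$ for every $\lambda\in J$, whence $c_\lambda\ge\alpha>0=\max\{\I_\lambda(0),\I_\lambda(v_2)\}$. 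Theorem \ref{thm2.1} then produces, for a.e.\ $\lambda\in[\frac12,1]$, a bounded sequence $(u_n)$ with $\I_\lambda(u_n)\to c_\lambda$ and $\I'_\lambda(u_n)\to0$, which is (i)--(iii).

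For (\ref{2.3}) I would assume in addition $V\in L^\infty$ and use the straight path $\gamma(t)=tv_2$, so that $c_\lambda\le\sup_{\tau\ge0}\I_\lambda(\tau u_{\varepsilon_0})$; then $(f4)$ yields $\I_\lambda(\tau u_{\varepsilon_0})\le g_\lambda(\tau):=\frac{\tau^2}{2}\|u_{\varepsilon_0}\|^2-\lambda\bigl(\frac{\tau^{2^*_s}}{2^*_s}|u_{\varepsilon_0}|_{2^*_s}^{2^*_s}+\frac{D\tau^q}{q}|u_{\varepsilon_0}|_q^q\bigr)$. An elementary computation gives $\max_{\tau\ge0}\bigl(\frac{\tau^2}{2}\|u_\varepsilon\|^2-\frac{\lambda\tau^{2^*_s}}{2^*_s}|u_\varepsilon|_{2^*_s}^{2^*_s}\bigr)=\frac sN\,\lambda^{-\frac{N-2s}{2s}}\bigl(\|u_\varepsilon\|^2/|u_\varepsilon|_{2^*_s}^2\bigr)^{\frac N{2s}}$, and the maximiser $\tau_\varepsilon$ of $g_\lambda$ stays in a fixed compact subinterval of $(0,\infty)$, uniformly for small $\varepsilon$ and $\lambda\in J$ (since $\|u_\varepsilon\|^2$ is bounded above and $|u_\varepsilon|_{2^*_s}^{2^*_s}$ away from $0$); evaluating $g_\lambda$ at $\tau_\varepsilon$ and bounding its first two terms by the above maximum therefore gives $c_\lambda\le\frac sN\,\lambda^{-\frac{N-2s}{2s}}\bigl(\|u_{\varepsilon_0}\|^2/|u_{\varepsilon_0}|_{2^*_s}^2\bigr)^{\frac N{2s}}-c\,D\,|u_{\varepsilon_0}|_q^q$ for some $c>0$. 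Finally I would invoke the classical estimates for the truncated bubble (see \cite{SV}): $[u_\varepsilon]^2/|u_\varepsilon|_{2^*_s}^2=S_*+O(\varepsilon^{N-2s})$, $|u_\varepsilon|_q^q$ of order $\varepsilon^{N-\frac{N-2s}{2}q}$, and $|u_\varepsilon|_2^2$ of order $\varepsilon^{2s}$, $\varepsilon^{2s}|\log\varepsilon|$, $\varepsilon^{N-2s}$ according as $N>4s$, $N=4s$, $2s<N<4s$. Since $\|u_\varepsilon\|^2\le[u_\varepsilon]^2+|V|_\infty|u_\varepsilon|_2^2$ and $|u_\varepsilon|_{2^*_s}^2$ is bounded away from $0$, these give $\bigl(\|u_\varepsilon\|^2/|u_\varepsilon|_{2^*_s}^2\bigr)^{\frac N{2s}}\le S_*^{\frac N{2s}}+C\bigl(\varepsilon^{N-2s}+|u_\varepsilon|_2^2\bigr)$; plugging this into the bound for $c_\lambda$ and using $\lambda\ge\frac12$ to absorb the $\lambda$-power into a constant, (\ref{2.3}) reduces to the single inequality $\varepsilon^{N-2s}+|u_\varepsilon|_2^2=o(|u_\varepsilon|_q^q)$ as $\varepsilon\to0$. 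A short case analysis shows this holds precisely because $\max\{2,\frac{4s}{N-2s}\}<q<2^*_s$; one then fixes $\varepsilon_0$ small enough, which is the choice referred to above.

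The \emph{main obstacle} is the strict inequality (\ref{2.3}), and inside it the dimension-dependent asymptotics of the truncated bubble: in each of the regimes $N>4s$, $N=4s$, $2s<N<4s$ one must verify that the gain $c\,D\,|u_\varepsilon|_q^q$ coming from $(f4)$ outweighs both the Sobolev remainder $O(\varepsilon^{N-2s})$ and the $L^2$-mass $|u_\varepsilon|_2^2$, and it is exactly here that the hypothesis $q>\max\{2,\frac{4s}{N-2s}\}$ is used. Items (i)--(iii) are routine; the only care needed is to keep all mountain pass constants uniform in $\lambda\in[\frac12,1]$, which is automatic since $\I_\lambda\ge\I_1$ there.
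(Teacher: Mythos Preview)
Your proposal is correct and follows essentially the same route as the paper: apply Jeanjean's Theorem \ref{thm2.1} with $A(u)=\tfrac12\|u\|^2$, $B(u)=\int F(u)$ for (i)--(iii), and for (\ref{2.3}) test against a truncated Aubin--Talenti bubble, use $(f4)$ to extract the subcritical gain $D|u_\varepsilon|_q^q$, and then run the case analysis $N\gtrless 4s$ with the estimates (\ref{BN1})--(\ref{BN3}). The only cosmetic differences are that the paper normalizes to $|v_\varepsilon|_{2^*_s}=1$ and splits the range $t\in[0,\tau]\cup[\tau,\infty)$ to keep the $t^q$ factor bounded below, whereas you argue directly that the maximizer of $g_\lambda$ lies in a fixed compact interval of $(0,\infty)$; for the latter, note that the parenthetical you give (``$\|u_\varepsilon\|^2$ bounded above and $|u_\varepsilon|_{2^*_s}^{2^*_s}$ away from $0$'') justifies only the upper bound on $\tau_\varepsilon$, and the lower bound needs in addition $\|u_\varepsilon\|^2\ge[u_\varepsilon]^2\ge c_0>0$ together with the boundedness of $|u_\varepsilon|_q^q$, which is of course immediate.
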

\begin{proof}
We aim to apply Theorem \ref{thm2.1} with $X=H$, $J=[\frac{1}{2}, 1]$, $A(u)=\frac{1}{2} \|u\|^{2}$, and $B(u)=\int_{\R^{N}} F(u) \,dx$.
Clearly, $A(u)\rightarrow \infty$ as $\|u\|\rightarrow \infty$, and by the assumption $(f4)$, it follows that $B(u)\geq 0$ for any $u\in H$.
Now, by using $(f1)$-$(f3)$, we know that for any $\varepsilon >0$ there exists $C_{\varepsilon}>0$ such that
$$
|F(t)|\leq \varepsilon t^{2}+C_{\varepsilon} |t|^{2^{*}_{2}} \mbox{ for all } t\in \R.
$$
Then, by using Theorem \ref{Sembedding}, $(V2)$ and $\lambda\in [\frac{1}{2}, 1]$, we get
\begin{align*}
\I_{\lambda}(u)&\geq \frac{1}{2} \|u\|^{2}-\lambda [\varepsilon |u|_{2}^{2}+C_{\varepsilon} |u|_{2^{*}_{2}}^{2^{*}_{2}}] \\
&\geq \frac{1}{2} \|u\|^{2}- \frac{\varepsilon}{V_{0}} \|u\|^{2}-C_{\varepsilon} S_{*}^{-\frac{2^{*}_{s}}{2}}\|u\|^{2^{*}_{2}}
\end{align*}
so there exist $\alpha>0$ and $r>0$ independent of $\lambda$, such that 
$$
\I_{\lambda}(u)\geq \alpha>0 \mbox{ for any }\|u\|=r.
$$
By using $(f4)$ and $\lambda\in [\frac{1}{2}, 1]$, we can note that
\begin{equation}\label{tend}
\I_{\lambda}(u)\leq \frac{1}{2} \|u\|^{2}-\frac{N-2s}{4N} |u^{+}|_{2^{*}_{s}}^{2^{*}_{s}}-\frac{D}{2q}|u^{+}|_{q}^{q}
\end{equation}
so, taking $\varphi\in H$ such that $\varphi\geq 0$ and $\varphi\neq 0$, we can see that $\I_{\lambda}(tu)\rightarrow -\infty$ as $t\rightarrow \infty$. Hence, there exists $t_{0}>0$ such that $\|t_{0}\varphi\|>r$ and $\I_{\lambda}(t_{0}\varphi)<0$ for all $\lambda \in [\frac{1}{2}, 1]$. Since $\I_{\lambda}(0)=0$, we set $v_{1}=0$ and $v_{2}=t_{0} \varphi$. Therefore, $\I_{\lambda}$ satisfies the assumptions of Theorem \ref{thm2.1}, and we can find a bounded Palais-Smale sequence for $\I_{\lambda}$ at the level $c_{\lambda}$. \\
Finally, we prove the estimate in (\ref{2.3}).
Let $\eta \in C^{\infty}_{0}(\R^{N})$ be a cut-off function such that $0\leq \eta \leq 1$, $\eta=1$ on $B_{r}$ and $\eta=0$ on $\R^{N}\setminus B_{2r}$, where $B_{r}$ denotes the ball in $\R^{N}$ of center at origin and radius $r$.
For $\varepsilon>0$, let us define $u_{\varepsilon}(x)=\eta(x)U_{\varepsilon}(x)$, where
$$
U_{\varepsilon}(x)=\frac{\kappa \varepsilon^{\frac{N-2s}{2}}}{(\varepsilon^{2}+|x|^{2})^{\frac{N-2s}{2}}}
$$
is a solution to
$$
(-\Delta)^{s}u=S_{*}|u|^{2^{*}_{s}-2}u \mbox{ in } \R^{N}
$$
and $\kappa$ is a suitable positive constant depending only on $N$ and $s$.\\
Now we set
$$
v_{\varepsilon}=\frac{u_{\varepsilon}}{|u_{\varepsilon}|_{2^{*}_{s}}}.
$$
As proved in \cite{DoMS, SV}, $v_{\varepsilon}$  satisfies the following useful estimates:
\begin{align}\label{BN1}
[v_{\varepsilon}]^{2}_{H^{s}(\R^{N})}\leq S_{*}+O(\varepsilon^{N-2s}),
\end{align}
\begin{equation}\label{BN2}
|v_{\varepsilon}|_{2}^{2}=
\left\{
\begin{array}{ll}
O(\varepsilon^{2s})  &\mbox{ if } N>4s \\
O(\varepsilon^{2s} |\log(\varepsilon)|) &\mbox{ if } N=4s \\
O(\varepsilon^{N-2s}) &\mbox{ if } N<4s,
\end{array}
\right.
\end{equation}
and
\begin{equation}\label{BN3}
|v_{\varepsilon}|_{q}^{q}=
\left\{
\begin{array}{ll}
O(\varepsilon^{\frac{2N-(N-2s)q}{2}})  &\mbox{ if } q>\frac{N}{N-2s} \\
O(\varepsilon^{\frac{(N-2s)q}{2}}) &\mbox{ if } q<\frac{N}{N-2s}.
\end{array}
\right.
\end{equation}
From the definition of $c_{\lambda}$, we know that 
\begin{equation}\label{cl}
c_{\lambda}\leq \sup_{t\geq 0}\I_{\lambda}(t v_{\varepsilon}).
\end{equation}
Now, we consider the following function for $t\geq 0$
$$
k(t)=\frac{t^{2}}{2}\|v_{\varepsilon}\|^{2}-\frac{t^{2^{*}_{s}}}{2^{*}_{s}}\lambda.
$$ 
We observe that $k(t)$ attains its maximum at $t_{0}=(\lambda^{-1} \|v_{\varepsilon}\|^{2})^{\frac{1}{2^{*}_{s}-2}}$ and 
\begin{equation}\label{mylove}
k(t_{0})=\frac{s}{N}\frac{1}{\lambda^{\frac{N-2s}{2s}}}\|v_{\varepsilon}\|^{\frac{N}{s}}=\frac{s}{N}\frac{1}{\lambda^{\frac{N-2s}{2s}}}\left([v_{\varepsilon}]_{H^{s}(\R^{N})}^{2}+\int_{\R^{N}} V(x) v_{\varepsilon}^{2}\, dx \right)^{\frac{N}{2s}}.
\end{equation}
Let us note that there exists $\tau\in(0, 1)$ such that for $\varepsilon<1$
\begin{equation}\label{MYLOVE}
\sup_{t\in [0, \tau]}\I_{\lambda}(t v_{\varepsilon})\leq \sup_{t\in [0, \tau]} \frac{t^{2}}{2} \|v_{\varepsilon}\|^{2}<\frac{s}{N}\frac{S_{*}^{\frac{N}{2s}}}{\lambda^{\frac{N-2s}{2s}}}.
\end{equation}
On the other hand, in view of $(f4)$, $\lambda\in [\frac{1}{2}, 1]$, \eqref{BN1} and \eqref{mylove},  we get
\begin{align*}
\sup_{t\geq \tau}\I_{\lambda}(t v_{\varepsilon})&\leq \sup_{t\geq 0} k(t)- \lambda\frac{D}{q} \tau^{q} |v_{\varepsilon}|_{q}^{q} \\
&\leq \frac{s}{N}\frac{1}{\lambda^{\frac{N-2s}{2s}}}\left([v_{\varepsilon}]_{H^{s}(\R^{N})}^{2}+\int_{\R^{N}} V(x) v_{\varepsilon}^{2}\, dx \right)^{\frac{N}{2s}}-\frac{D}{2q} \tau^{q} |v_{\varepsilon}|_{q}^{q} \\
&\leq \frac{s}{N} \frac{1}{\lambda^{\frac{N-2s}{2s}}} \left( S_{*} +O(\varepsilon^{N-2s})+\int_{\R^{N}} V(x) v_{\varepsilon}^{2}\, dx\right)^{\frac{N}{2s}}-C_{0}|v_{\varepsilon}|_{q}^{q}. 
\end{align*}
By using the elementary inequality $(a+b)^{p}\leq a^{p}+p(a+b)^{p-1}b$ for all $a, b>0$ and $p\geq 1$, and $V(x)\in L^{\infty}(\R^{N})$, we have
\begin{align*}
\sup_{t\geq \tau}\I_{\lambda}(t v_{\varepsilon})&\leq \frac{s}{N}\frac{S_{*}^{\frac{N}{2s}}}{\lambda^{\frac{N-2s}{2s}}}+O(\varepsilon^{N-2s})+C_{1}\int_{\R^{N}} V(x) |v_{\varepsilon}|^{2}\, dx-C_{0}|v_{\varepsilon}|_{q}^{q} \\
&\leq \frac{s}{N}\frac{S_{*}^{\frac{N}{2s}}}{\lambda^{\frac{N-2s}{2s}}}+O(\varepsilon^{N-2s})+C_{2}|v_{\varepsilon}|_{2}^{2}-C_{0}|v_{\varepsilon}|_{q}^{q}.
\end{align*}
Now, we distinguish the following cases: \\
If $N>4s$, then $q\in (2, 2^{*}_{s})$ and in particular $q>\frac{N}{N-2s}$. Hence, by using (\ref{BN2}) and (\ref{BN3}), we can see that 
\begin{align*}
\sup_{t\geq \tau}\I_{\lambda}(t v_{\varepsilon})&\leq \frac{s}{N}\frac{S_{*}^{\frac{N}{2s}}}{\lambda^{\frac{N-2s}{2s}}}+O(\varepsilon^{N-2s})+O(\varepsilon^{2s})-O(\varepsilon^{\frac{2N-(N-2s)q}{2}}).
\end{align*}
Taking into account $\frac{2N-(N-2s)q}{2}<2s<N-2s$, there exists $\varepsilon_{0}>0$ such that for any $\varepsilon\in (0, \varepsilon_{0})$
\begin{equation}\label{MBRS1}
\sup_{t\geq \tau}\I_{\lambda}(t v_{\varepsilon})<\frac{s}{N}\frac{S_{*}^{\frac{N}{2s}}}{\lambda^{\frac{N-2s}{2s}}}.
\end{equation}
When $N=4s$, then $q\in (2, 4)$ and in particular $q>\frac{N}{N-2s}=2$, so from (\ref{BN2}) and (\ref{BN3}) we deduce that
\begin{align*}
\sup_{t\geq \tau}\I_{\lambda}(t v_{\varepsilon})&\leq \frac{s}{N}\frac{S_{*}^{\frac{N}{2s}}}{\lambda^{\frac{N-2s}{2s}}}+O(\varepsilon^{2s})+O(\varepsilon^{2s}|\log(\varepsilon)|)-O(\varepsilon^{4s-sq}).
\end{align*}
Since $\lim_{\varepsilon\rightarrow 0} \frac{\varepsilon^{4s-sq}}{\varepsilon^{2s}(1+|\log(\varepsilon)|)}=\infty$, for any $\varepsilon$ sufficiently small we have
\begin{equation}\label{MBRS2}
\sup_{t\geq \tau}\I_{\lambda}(t v_{\varepsilon})<\frac{s}{N}\frac{S_{*}^{\frac{N}{2s}}}{\lambda^{\frac{N-2s}{2s}}}.
\end{equation}
Finally, if $2s<N<4s$, then $q\in (\frac{4s}{N-2s}, 2^{*}_{s})$ and in particular $q>\frac{N}{N-2s}$. Hence, observing that $\frac{2N-(N-2s)q}{2}<N-2s$, we get
\begin{align}\label{MBRS3}
\sup_{t\geq \tau}\I_{\lambda}(t v_{\varepsilon})\leq \frac{s}{N}\frac{S_{*}^{\frac{N}{2s}}}{\lambda^{\frac{N-2s}{2s}}}+O(\varepsilon^{N-2s})+O(\varepsilon^{N-2s})-O(\varepsilon^{\frac{2N-(N-2s)q}{2}}) 
<\frac{s}{N}\frac{S_{*}^{\frac{N}{2s}}}{\lambda^{\frac{N-2s}{2s}}}
\end{align}
for any $\varepsilon>0$ small enough. 
Putting together \eqref{cl}, \eqref{MYLOVE} and \eqref{MBRS1}-\eqref{MBRS3}, we can conclude that \eqref{2.3} holds.

\end{proof}

\begin{remark}\label{remark2.3}
Let us note that for $\lambda \in [\frac{1}{2}, 1]$, if $(u_{n})\subset H$ is such that 
$$
\|u_{n}\|\leq C, \quad \I_{\lambda}(u_{n})\rightarrow c_{\lambda}, \quad \I'_{\lambda}(u_{n})\rightarrow 0,
$$
then $u_{n}\geq 0$ in $H$.
In fact, by using $\langle \I_{\lambda}'(u_{n}), u_{n}^{-}\rangle=o_{n}(1)$, where $u^{-}=\min\{u, 0\}$, and the fact that $f(t)=0$ if $t\leq 0$, we can infer that
$$
\int_{\R^{N}} (-\Delta)^{\frac{s}{2}}u_{n} (-\Delta)^{\frac{s}{2}} u_{n}^{-}+ V(x)(u_{n}^{-})^{2} \,dx= o_{n}(1).
$$
On the other hand, we know that
\begin{align*}
\int_{\R^{N}} (-\Delta)^{\frac{s}{2}}u_{n} (-\Delta)^{\frac{s}{2}} u_{n}^{-} \, dx&=\iint_{\R^{2N}} \frac{(u_{n}(x)-u_{n}(y))(u_{n}^{-}(x)-u_{n}^{-}(y))}{|x-y|^{N+2s}} \, dx dy \\
&\geq \iint_{\R^{2N}} \frac{|u_{n}^{-}(x)-u_{n}^{-}(y)|^{2}}{|x-y|^{N+2s}} \, dx dy=[u_{n}^{-}]_{H^{s}(\R^{N})}^{2}.
\end{align*}
Then, $\|u_{n}^{-}\|=o_{n}(1)$, and this allows us to deduce that $\|u_{n}^{+}\|\leq C$, $\I_{\lambda}(u^{+}_{n})\rightarrow c_{\lambda}$ and $\I'_{\lambda}(u^{+}_{n})\rightarrow 0$ as $n\rightarrow \infty$.
\end{remark}

\noindent
Arguing as in \cite{A1, CW, Secchi2} we can prove the following fractional Pohozaev identity:
\begin{lem}\label{lemma2.4}
For $\lambda \in [\frac{1}{2}, 1]$, if $u_{\lambda}$ is a critical point of $\I_{\lambda}$, then $u_{\lambda}$ satisfies the following
Pohozaev identity
\begin{equation}\label{Pohid}
\frac{N-2s}{2}[u_{\lambda}]_{H^{s}(\R^{N})}^{2}+\frac{1}{2}\int_{\R^{N}} \nabla V(x) \cdot x u^{2}_{\lambda} dx=N\int_{\R^{N}} \left[\lambda F(u)-\frac{1}{2} V(x) u^{2}_{\lambda}\right] dx.
\end{equation}
\end{lem}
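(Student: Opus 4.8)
The plan is to multiply the equation $(-\Delta)^{s} u_{\lambda}+V(x)u_{\lambda}=\lambda f(u_{\lambda})$ by the test function $x\cdot\nabla u_{\lambda}$ (more precisely, by the dilation generator applied to $u_{\lambda}$) and integrate over $\R^{N}$, then compute each of the three resulting terms using the scaling behaviour of the pieces of the functional. To make this rigorous I would not work directly in $\R^{N}$ with the singular kernel, but instead pass to the Caffarelli–Silvestre $s$-harmonic extension $\tilde u_{\lambda}$ of $u_{\lambda}$ to the half-space $\R^{N+1}_{+}=\R^{N}\times(0,\infty)$, which satisfies a degenerate-elliptic local equation $\mathrm{div}(y^{1-2s}\nabla \tilde u_{\lambda})=0$ with Neumann-type boundary condition $-\lim_{y\to 0}y^{1-2s}\partial_{y}\tilde u_{\lambda}=\kappa_{s}\big(\lambda f(u_{\lambda})-V(x)u_{\lambda}\big)$ on the boundary $\{y=0\}$; this is precisely the route indicated by the references \cite{A1, CW, Secchi2}. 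The argument is by now standard once one has enough regularity on $u_{\lambda}$ to justify the manipulations, so I would only sketch it.

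The key steps, in order: (1) establish the regularity and decay of $u_{\lambda}$ needed to make all integrations by parts legitimate — since $f\in C^{1,\beta}$ with $\beta>\max\{0,1-2s\}$ and $V\in C^{1}$, bootstrap (via fractional regularity theory / the extension) gives $u_{\lambda}\in C^{2,\alpha}_{loc}$ (or at least enough Hölder regularity), plus the polynomial decay of $u_{\lambda}$ and $\tilde u_{\lambda}$ at infinity that makes boundary terms at $|x|\to\infty$ vanish; (2) multiply the extended equation by $(x,y)\cdot\nabla_{(x,y)}\tilde u_{\lambda}$, integrate over the half-ball $B^{+}_{R}$, use the divergence theorem and the known Pohozaev-type computation for the operator $\mathrm{div}(y^{1-2s}\nabla\cdot)$ to reduce the interior integral to boundary integrals, and send $R\to\infty$ so that the lateral boundary contributions disappear by the decay from step (1); (3) identify the interior Dirichlet-energy term with $\frac{N-2s}{2}[u_{\lambda}]^{2}_{H^{s}(\R^{N})}$ (using the energy identity $\kappa_{s}[u_{\lambda}]^{2}_{H^{s}}=\iint_{\R^{N+1}_{+}}y^{1-2s}|\nabla\tilde u_{\lambda}|^{2}$ relating extension energy to Gagliardo seminorm); (4) compute the boundary contribution coming from the nonlinearity: $\int_{\R^{N}}\big(\lambda f(u_{\lambda})-V(x)u_{\lambda}\big)\,x\cdot\nabla u_{\lambda}\,dx$, and integrate by parts in $x$, using $x\cdot\nabla F(u_{\lambda})=\mathrm{div}\big(x F(u_{\lambda})\big)-N F(u_{\lambda})$ and $x\cdot\nabla(u_{\lambda}^{2}/2)=\mathrm{div}(x u_{\lambda}^{2}/2)-\frac N2 u_{\lambda}^{2}$, which produces the $N\int(\lambda F(u_{\lambda})-\tfrac12 V u_{\lambda}^{2})\,dx$ term together with the extra term $\frac12\int_{\R^{N}}\nabla V(x)\cdot x\,u_{\lambda}^{2}\,dx$ coming from differentiating $V$; collecting everything yields exactly \eqref{Pohid}.

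The main obstacle I expect is the justification of the a priori regularity and decay of $u_{\lambda}$ — i.e. that $u_{\lambda}$ is smooth enough and decays fast enough that the test function $x\cdot\nabla u_{\lambda}$ is admissible and the boundary integrals at infinity in step (2) genuinely vanish. This is exactly where the hypothesis $(f1)$ (the Hölder regularity of $f'$ with the threshold $\beta>\max\{0,1-2s\}$) and the critical growth control $(f5)$-type bounds are used: they feed a Moser/Calderón–Zygmund bootstrap for the fractional (or extended) equation, giving $u_{\lambda}\in L^{\infty}\cap C^{2,\alpha}_{loc}$ and then the decay estimate $|u_{\lambda}(x)|\lesssim |x|^{-(N+2s)}$ (and corresponding gradient decay), so all cut-off error terms are $O(R^{-\delta})\to 0$. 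Once this is in hand, the remaining computation is the classical dilation-identity bookkeeping and I would present it only briefly, referring to \cite{A1, CW, Secchi2} for the detailed extension computation.
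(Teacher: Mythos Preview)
Your proposal is correct and matches the paper's approach: the paper does not give a proof at all but simply writes ``Arguing as in \cite{A1, CW, Secchi2}'' before stating the lemma, and those references carry out exactly the Caffarelli--Silvestre extension argument you sketch (multiply the extended equation by $(x,y)\cdot\nabla\tilde u_{\lambda}$, integrate over half-balls, send $R\to\infty$, and identify the boundary terms). Your outline of the regularity/decay bootstrap needed to justify the integrations by parts is also the standard ingredient in those works, so nothing further is required.
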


\begin{remark}\label{remark2.5}
It is easy to check that if $(V1)$-$(V2)$ and $(f1)$-$(f3)$ hold, then there exists $\beta>0$ independent of $\lambda\in [\frac{1}{2}, 1]$ such that any nontrivial critical point $u_{\lambda}$ of $\I_{\lambda}$ verifies $\|u_{\lambda}\|\geq \beta>0$.\\
In fact, by using $(f1)$-$(f3)$, we can see that for any $\varepsilon>0$ there exists $C_{\varepsilon}>0$ such that 
$$
|F(t)|\leq \varepsilon t^{2}+C_{\varepsilon} |t|^{2^{*}_{s}} \mbox{ for all } t\in \R.
$$
Taking into account $\langle \I'_{\lambda}(u_{\lambda}), u_{\lambda}\rangle=0$, $\lambda\leq 1$, $F\geq 0$, the Sobolev embedding, $(V1)$-$(V2)$, we have
\begin{align*}
\|u_{\lambda}\|^{2}=\lambda \int_{\R^{N}} F(u_{\lambda}) \,dx\leq \varepsilon C_{1} \|u_{\lambda}\|^{2}+C_{\varepsilon} C_{2}\|u_{\lambda}\|^{2^{*}_{s}}
\end{align*}
where $C_{1}, C_{2}>0$ depending only on $V_{0}$ and the best constant $S_{*}$. Choosing $\varepsilon>0$ sufficiently small and by using $u_{\lambda}\neq 0$, we deduce that there exists $\beta>0$ such that $\|u_{\lambda}\|\geq \beta>0$.
\end{remark}

\noindent
Now, we establish the following compactness lemma which will be useful to prove Theorem \ref{thm1}.
\begin{lem}\label{lemma2.6}
Assume that $V(x)\equiv V$ and $f$ satisfies $(f1)$-$(f4)$. For $\lambda \in [\frac{1}{2}, 1]$, let $(u_{n})\subset H$ be a bounded sequence  in $H$ such that $u_{n}\geq 0$, $\I_{\lambda}(u_{n})\rightarrow c_{\lambda}$, $\I'_{\lambda}(u_{n})\rightarrow 0$. Moreover, $c_{\lambda}<\frac{s}{N} \frac{S_{*}^{\frac{N}{2s}}}{\lambda^{\frac{N-2s}{2s}}}$. \\
Then there exists a subsequence of $(u_{n})$, which we denote again by $(u_{n})$, and an integer $k\in \N\cup \{0\}$ and $w_{\lambda}^{j}\in H$ for $1\leq j\leq k$ such that 
\begin{compactenum}[(i)]
\item $u_{n}\rightarrow u_{\lambda}$ in $H$ and $\I_{\lambda}'(u_{\lambda})=0$;
\item $w_{\lambda}^{j}\neq 0$ and $\I_{\lambda}'(w_{\lambda}^{j})=0$ for $1\leq j\leq k$;
\item $c_{\lambda}=\I_{\lambda}(u_{\lambda})+\sum_{j=1}^{k} \I_{\lambda}(w_{\lambda}^{j})$
\end{compactenum}
where we agree that in the case $k=0$, the above holds without $w_{\lambda}^{j}$.
\end{lem}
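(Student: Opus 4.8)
The plan is to run an iterative profile (global compactness) decomposition in the spirit of Struwe and of Jeanjean--Tanaka \cite{JT}, adapted to the fractional critical framework.

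\emph{Step 1 (weak limit).} Since $(u_{n})$ is bounded in $H=H^{s}(\R^{N})$ (recall that $V$ is a positive constant, so $\|\cdot\|$ is equivalent to $\|\cdot\|_{H^{s}(\R^{N})}$ and, crucially, translation invariant), up to a subsequence $u_{n}\rightharpoonup u_{\lambda}$ in $H$, $u_{n}\to u_{\lambda}$ in $L^{r}_{loc}(\R^{N})$ for every $r\in[2,2^{*}_{s})$ and a.e.\ in $\R^{N}$; moreover $u_{\lambda}\geq 0$. Writing $f=f_{1}+f_{2}$ with $f_{2}(t)=t^{2^{*}_{s}-1}$ the critical part and $f_{1}$ growing strictly slower than $t^{2^{*}_{s}-1}$ at infinity by $(f3)$ and like $o(t)$ at the origin by $(f2)$, one passes to the limit in $\langle \I'_{\lambda}(u_{n}),\varphi\rangle=o_{n}(1)$ for $\varphi\in C^{\infty}_{c}(\R^{N})$: the term $\int f_{2}(u_{n})\varphi$ converges because $u_{n}^{2^{*}_{s}-1}\rightharpoonup u_{\lambda}^{2^{*}_{s}-1}$ in $L^{(2^{*}_{s})'}(\R^{N})$ (bounded sequence plus a.e.\ convergence), while $\int f_{1}(u_{n})\varphi\to\int f_{1}(u_{\lambda})\varphi$ follows from Lemma~\ref{strauss} and the local $L^{r}$ convergence. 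Hence $\I'_{\lambda}(u_{\lambda})=0$, and by $(f5)$ and a standard bootstrap for the fractional Laplacian, $u_{\lambda}\in L^{\infty}(\R^{N})$.

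\emph{Step 2 (first splitting).} Set $v^{1}_{n}:=u_{n}-u_{\lambda}\rightharpoonup 0$ in $H$. By weak convergence the cross terms in the Gagliardo seminorm and in the $L^{2}$ inner product vanish, so $\|u_{n}\|^{2}=\|u_{\lambda}\|^{2}+\|v^{1}_{n}\|^{2}+o_{n}(1)$; by Lemma~\ref{Brezis-Lieb} applied to $|\cdot|_{2}$ and $|\cdot|_{2^{*}_{s}}$, together with a Brezis--Lieb splitting of $\int F$ (decompose $F$ into the pure power $\tfrac{1}{2^{*}_{s}}t^{2^{*}_{s}}$, handled by Lemma~\ref{Brezis-Lieb}, plus a subcritical remainder, handled by Lemma~\ref{strauss}), one gets $\int F(u_{n})=\int F(u_{\lambda})+\int F(v^{1}_{n})+o_{n}(1)$, whence $\I_{\lambda}(u_{n})=\I_{\lambda}(u_{\lambda})+\I_{\lambda}(v^{1}_{n})+o_{n}(1)$. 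Likewise, expanding $f(u_{n})-f(u_{\lambda})-f(v^{1}_{n})$ and using Lemma~\ref{willem} for the critical power (its hypothesis $u_{\lambda}\in L^{\infty}_{loc}(\R^{N})$ holding by Step~1) and Lemma~\ref{strauss} for the lower order terms, one shows $\I'_{\lambda}(v^{1}_{n})\to 0$ in $H^{-1}$.

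\emph{Step 3 (vanishing dichotomy and iteration).} Fix $q\in(2,2^{*}_{s})$ and put $\delta_{1}:=\limsup_{n}\sup_{y\in\R^{N}}\int_{B_{1}(y)}|v^{1}_{n}|^{q}\,dx$. If $\delta_{1}=0$, Lemma~\ref{lions lemma} gives $v^{1}_{n}\to 0$ in $L^{q}(\R^{N})$; combining this with $(f2)$--$(f3)$ and $\langle\I'_{\lambda}(v^{1}_{n}),v^{1}_{n}\rangle=o_{n}(1)$ one finds $|v^{1}_{n}|_{2^{*}_{s}}^{2^{*}_{s}}\to b$, $\|v^{1}_{n}\|^{2}\to\lambda b$ and $\int F(v^{1}_{n})\to b/2^{*}_{s}$, hence $\I_{\lambda}(v^{1}_{n})\to\frac{s}{N}\lambda b$, while \eqref{FSI} forces $b=0$ or $b\geq(S_{*}/\lambda)^{\frac{N}{2s}}$; the latter would give $\I_{\lambda}(v^{1}_{n})\to\frac{s}{N}\frac{S_{*}^{\frac{N}{2s}}}{\lambda^{\frac{N-2s}{2s}}}$, contradicting $\I_{\lambda}(v^{1}_{n})\to c_{\lambda}-\I_{\lambda}(u_{\lambda})\leq c_{\lambda}<\frac{s}{N}\frac{S_{*}^{\frac{N}{2s}}}{\lambda^{\frac{N-2s}{2s}}}$ (here $\I_{\lambda}(u_{\lambda})\geq 0$, cf.\ Step~4). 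So $b=0$, $v^{1}_{n}\to 0$ in $H$ and $k=0$. If instead $\delta_{1}>0$, pick $y^{1}_{n}$ with $\int_{B_{1}(y^{1}_{n})}|v^{1}_{n}|^{q}\geq\delta_{1}/2$; since $v^{1}_{n}\rightharpoonup 0$ and the embedding into $L^{q}_{loc}$ is compact, $|y^{1}_{n}|\to\infty$. By translation invariance of $\I_{\lambda}$ (this is exactly where $V$ constant is essential), $w^{1}_{n}:=v^{1}_{n}(\cdot+y^{1}_{n})$ is bounded in $H$ with $\I'_{\lambda}(w^{1}_{n})\to 0$, so $w^{1}_{n}\rightharpoonup w^{1}_{\lambda}\neq 0$, $w^{1}_{\lambda}\geq 0$ and $\I'_{\lambda}(w^{1}_{\lambda})=0$ (pass to the limit as in Step~1). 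Set $v^{2}_{n}:=v^{1}_{n}-w^{1}_{\lambda}(\cdot-y^{1}_{n})\rightharpoonup 0$; arguing as in Step~2 around $y^{1}_{n}$, $\|v^{2}_{n}\|^{2}=\|v^{1}_{n}\|^{2}-\|w^{1}_{\lambda}\|^{2}+o_{n}(1)$, $\I_{\lambda}(v^{2}_{n})=\I_{\lambda}(v^{1}_{n})-\I_{\lambda}(w^{1}_{\lambda})+o_{n}(1)$ and $\I'_{\lambda}(v^{2}_{n})\to 0$. Repeat with $v^{2}_{n}$, producing $w^{2}_{\lambda},w^{3}_{\lambda},\dots$

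\emph{Step 4 (termination and conclusion).} By Remark~\ref{remark2.5} there is $\beta>0$, independent of $\lambda$, with $\|w\|\geq\beta$ for every nontrivial critical point $w$ of $\I_{\lambda}$; the telescoping identity $\|v^{j+1}_{n}\|^{2}=\|v^{1}_{n}\|^{2}-\sum_{i=1}^{j}\|w^{i}_{\lambda}\|^{2}+o_{n}(1)\geq 0$ forces $\sum_{i}\|w^{i}_{\lambda}\|^{2}\leq\limsup_{n}\|v^{1}_{n}\|^{2}<\infty$, so after finitely many steps, say $k$, the dichotomy must land in the vanishing case and (by the argument of Step~3, applied with $\I_{\lambda}(v^{k+1}_{n})\to c_{\lambda}-\I_{\lambda}(u_{\lambda})-\sum_{j}\I_{\lambda}(w^{j}_{\lambda})\leq c_{\lambda}$ below the threshold) one gets $v^{k+1}_{n}\to 0$ in $H$; a diagonal argument manages the accumulation of the $o_{n}(1)$'s through the $k$ stages. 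Summing the energy identities gives $c_{\lambda}=\I_{\lambda}(u_{\lambda})+\sum_{j=1}^{k}\I_{\lambda}(w^{j}_{\lambda})$, and applying Lemma~\ref{lemma2.4} with $\nabla V\equiv 0$ to $u_{\lambda}$ and to each $w^{j}_{\lambda}$ yields $\I_{\lambda}(w)=\frac{s}{N}[w]_{H^{s}(\R^{N})}^{2}\geq 0$ for each of these critical points, so all summands are nonnegative; this establishes (i)--(iii). The main obstacle is the Brezis--Lieb type splitting of the critical nonlinearity, in particular in the \emph{derivative} $\langle\I'_{\lambda}(\cdot),\varphi\rangle$ uniformly for $\|\varphi\|\leq 1$ — this is precisely what Lemma~\ref{willem} (with the regularity input $u_{\lambda},w^{j}_{\lambda}\in L^{\infty}$) supplies; the remaining work is the bookkeeping of the translations and of the error terms.
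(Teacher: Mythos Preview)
Your strategy --- a Struwe/Jeanjean--Tanaka profile decomposition, with the threshold $\frac{s}{N}S_{*}^{N/2s}\lambda^{-(N-2s)/2s}$ ruling out concentration, Pohozaev (Lemma~\ref{lemma2.4}) giving nonnegative energies, and Remark~\ref{remark2.5} forcing termination --- is the same as the paper's. Two points, however. A minor one first: in Step~1 you invoke $(f5)$ for $u_{\lambda}\in L^{\infty}$, but the lemma assumes only $(f1)$--$(f4)$; the paper gets this from $(f1)$--$(f3)$ via a Moser-type iteration \cite{A1,CW}.

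The real gap is your Step~2 claim that $\I'_{\lambda}(v^{1}_{n})\to 0$ in $H^{-1}$, on which your whole iteration rests. For the critical piece this is exactly Lemma~\ref{willem}, but for the subcritical remainder $g(t)=f(t)-t^{2^{*}_{s}-1}$ you cite Lemma~\ref{strauss}, which only yields $L^{1}$-convergence on bounded sets against a \emph{fixed} bounded weight, not convergence uniformly over $\|\varphi\|\le 1$. The paper proves the analogous dual-space splitting for $g$ only later, in Lemma~\ref{lemma4.4} (see \eqref{4.3}--\eqref{4.8}), and there it explicitly needs $(f5)$ to bound $|g(u_{n})-g(v^{1}_{n})|$ by a multiple of $|u_{\lambda}|$ via the mean value theorem; without $(f5)$ no growth bound on $g'$ is available and that estimate fails. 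In the present lemma the paper sidesteps the issue entirely: it never asserts $\I'_{\lambda}(v^{j}_{n})\to 0$, but instead extracts the profiles $w_{\lambda}^{k}$ as weak limits of the translated \emph{original} sequence $u_{n}(\cdot+y_{n}^{k})$ (automatically Palais--Smale by translation invariance, so $\I'_{\lambda}(w_{\lambda}^{k})=0$ follows as in Step~1), and only tests the derivative against the specific remainder $\xi_{n}=u_{n}-u_{\lambda}-\sum_{k}w_{\lambda}^{k}(\cdot-y_{n}^{k})$ \emph{under the vanishing hypothesis}, where $\xi_{n}\to 0$ in $L^{t}(\R^{N})$ for $t\in(2,2^{*}_{s})$ kills the $g$-terms directly via \eqref{2.14}. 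To make your cleaner packaging rigorous under $(f1)$--$(f4)$ alone, you must either reorganize along these lines or supply a separate argument for the $g$-part of the dual splitting.
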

\begin{proof}
We divide the proof in several steps. \\

\begin{compactenum}
\item [{\it Step $1$}] Extracting a subsequence if necessary, we can assume that $u_{n}\rightharpoonup u_{\lambda}$ in $H$ with $u_{\lambda}$ critical point of $\I_{\lambda}$.\\
\end{compactenum}

\noindent
Since $(u_{n})$ is bounded in $H$ and $H$ is a reflexive Banach space, up to a subsequence, we can suppose that $u_{n}\rightharpoonup u_{\lambda}$ in $H$, and, in view of Theorem \ref{Sembedding}, $u_{n}\rightarrow u_{\lambda}$ in $L^{r}_{loc}(\R^{N})$ for all $r\in [1, 2^{*}_{s})$. 
Then, for any $\varphi\in C^{\infty}_{0}(\R^{N})$ we have
\begin{align}\label{criticalpoint}
\langle I'_{\lambda}(u_{n}), \varphi \rangle-\langle I'_{\lambda}(u_{\lambda}), \varphi\rangle =& \langle u_{n}-u_{\lambda}, \varphi \rangle-\lambda \int_{\R^{N}} [g(u_{n})-g(u)]\varphi \, dx  \nonumber \\
&-\lambda \int_{\R^{N}} (|u_{n}|^{2^{*}_{s}-2}u_{n}-|u_{\lambda}|^{2^{*}_{s}-2}u_{\lambda})\varphi \, dx,
\end{align}
where $g(t)=f(t)-(t^{+})^{\frac{N+2s}{N-2s}}$.
Since $u_{n}\rightharpoonup u_{\lambda}$ in $H$, we get
\begin{equation}\label{lim1}
\langle u_{n}-u_{\lambda}, \varphi \rangle\rightarrow 0.
\end{equation}
Moreover, $\{|u_{n}|^{2^{*}_{s}-2}u_{n}-|u_{\lambda}|^{2^{*}_{s}-2}u_{\lambda}\}$ is bounded in $L^{\frac{2^{*}_{s}}{2^{*}_{s}-1}}(\R^{N})$ and 
$$
|u_{n}|^{2^{*}_{s}-2}u_{n}\rightarrow |u_{\lambda}|^{2^{*}_{s}-2}u_{\lambda} \mbox{ a.e. in } \R^{N},
$$
so we obtain that
\begin{equation}\label{lim2}
\int_{\R^{N}} (|u_{n}|^{2^{*}_{s}-2}u_{n}-|u_{\lambda}|^{2^{*}_{s}-2}u_{\lambda})\varphi \, dx \rightarrow 0.
\end{equation}
By using Lemma \ref{strauss}, $(f2)$ and $(f3)$ we can infer that
\begin{equation}\label{lim3}
\int_{\R^{N}} [g(u_{n})-g(u)]\varphi \, dx\rightarrow 0.
\end{equation}
Putting together (\ref{criticalpoint}), (\ref{lim1}), (\ref{lim2}), (\ref{lim3}) and $\I'_{\lambda}(u_{n})\rightarrow 0$, we can see that $\langle\I'_{\lambda}(u_{\lambda}), \varphi\rangle=0$ for any $\varphi\in C^{\infty}_{0}(\R^{N})$. By using the density of $C^{\infty}_{0}(\R^{N})$ in $H^{s}(\R^{N})$, we deduce that $\I'_{\lambda}(u_{\lambda})=0$, that is $(i)$ is satisfied.
Now, we set $v_{n}^{1}=u_{n}-u_{\lambda}$.\\

\begin{compactenum}
\item [{\it Step $2$}] If $\lim_{n\rightarrow \infty}\sup_{z\in \R^{N}} \int_{B_{1}(z)} |v_{n}^{1}|^{2} dx=0$, then $u_{n}\rightarrow u_{\lambda}$ in $H$ and Lemma \ref{lemma2.6} holds with $k=0$.\\
\end{compactenum}

\noindent
From Lemma \ref{lions lemma}, we get
\begin{equation}\label{2.13}
v_{n}^{1}\rightarrow 0 \mbox{ in } L^{t}(\R^{N}),\quad \forall t\in (2, 2^{*}_{s}).  
\end{equation}  
Now, we observe that 
\begin{align*}
\langle \I'_{\lambda}(u_{n}), v_{n}^{1} \rangle &=\langle u_{n}, v_{n}^{1} \rangle-\lambda \int_{\R^{N}} f(u_{n}) v_{n}^{1} \, dx \\
&=\|v_{n}^{1}\|^{2}+\langle u_{\lambda}, v_{n}^{1} \rangle-\lambda \int_{\R^{N}} f(u_{n}) v_{n}^{1} \, dx,
\end{align*}
that is
\begin{align*}
\|v_{n}^{1}\|^{2}=\langle \I'_{\lambda}(u_{n}), v_{n}^{1} \rangle-\langle u_{\lambda}, v_{n}^{1} \rangle+\lambda \int_{\R^{N}} f(u_{n}) v_{n}^{1} \, dx.
\end{align*}
By using $\langle \I'_{\lambda}(u_{\lambda}), v_{n}^{1} \rangle=0$, $\langle \I'_{\lambda}(u_{n}), v_{n}^{1} \rangle=o(1)$ and the definition of $g$, we have
\begin{align*}
\|v_{n}^{1}\|^{2}&=\langle \I'_{\lambda}(u_{n}), v_{n}^{1} \rangle+\lambda \int_{\R^{N}} (f(u_{n})-f(u_{\lambda}))v_{n}^{1} dx \\
&=\lambda \int_{\R^{N}} (g(u_{n})-g(u_{\lambda}))v_{n}^{1} dx+\lambda \int_{\R^{N}} (|u_{n}|^{2^{*}_{s}-2}u_{n}- |u_{\lambda}|^{2^{*}_{s}-2}u_{\lambda})v_{n}^{1} dx+o(1).
\end{align*}
Now, by $(f1)$-$(f3)$, we know that for any $\varepsilon>0$ there exists $C_{\varepsilon}>0$ such that 
\begin{equation}\label{2.14}
|g(t)|\leq \varepsilon (|t|+|t|^{2^{*}_{s}-1})+C_{\varepsilon} |t|^{q-1} \mbox{ for all } t\in \R.
\end{equation}
Therefore, taking into account (\ref{2.13}) and (\ref{2.14}), we obtain
$$
\|v_{n}^{1}\|^{2}= \lambda \int_{\R^{N}} (|u_{n}|^{2^{*}_{s}-2} u_{n} - |u_{\lambda}|^{2^{*}_{s}-2}u_{\lambda})v_{n}^{1}\, dx +o(1).
$$
Since $u_{\lambda}$ is a weak solution to (\ref{P}) and $f$ satisfies $(f1)$-$(f3)$, we can argue as in \cite{A1, CW} to infer that $u_{\lambda}\in L^{\infty}(\R^{N})$. 
Then, by using Lemma \ref{willem}, we  can see that 
\begin{align}\label{2.15}
\left| \int_{\R^{N}} [|u_{n}|^{2^{*}_{s}-2}u_{n} - |u_{\lambda}|^{2^{*}_{s}-2}u_{\lambda} - |u_{n}-u_{\lambda}|^{2^{*}_{s}-2} (u_{n}-u_{\lambda})]\varphi \, dx \right| = o(1) \|\varphi\|, \quad \forall \varphi \in H.
\end{align}  
Taking $\varphi=v_{n}^{1}=u_{n}- u_{\lambda}$ in (\ref{2.15}), we deduce
\begin{equation}\label{2.16}
\|v_{n}^{1}\|^{2} = \lambda \int_{\R^{N}} |v_{n}^{1}|^{2^{*}_{s}} \, dx + o(1).
\end{equation}
Let us note that Lemma \ref{Brezis-Lieb} yields
\begin{equation}\label{2.17}
\|v_{n}^{1}\|^{2}= \|u_{n}\|^{2}-\|u_{\lambda}\|^{2}+o(1)
\end{equation}
and
\begin{equation}\label{2.18}
\int_{\R^{N}} |v_{n}^{1}|^{2^{*}_{s}} \, dx = \int_{\R^{N}} |u_{n}|^{2^{*}_{s}} \, dx - \int_{\R^{N}} |u_{\lambda}|^{2^{*}_{s}}\, dx +o(1). 
\end{equation}
At this point, we aim to prove that 
\begin{equation}\label{2.19}
\int_{\R^{N}} G(v_{n}^{1})\, dx = \int_{\R^{N}} G(u_{n})\, dx - \int_{\R^{N}} G(u_{\lambda})\, dx +o(1). 
\end{equation}
By (\ref{2.14}) and the mean value theorem, it holds
\begin{equation*}
|G(u_{n}) - G(v_{n}^{1})|\leq C[(|v_{n}^{1}| + |u_{\lambda}|) + (|v_{n}^{1}| + |u_{\lambda}|)^{2^{*}_{s}-1}]|u_{\lambda}|. 
\end{equation*}
Fixed $R>0$, by using the H\"older's inequality we obtain
\begin{align}\begin{split}\label{2.20}
&\int_{\{|x|\geq R\}} |G(u_{n}) - G(v_{n}^{1})|\, dx \\
&\leq C \left(\int_{\{|x|\geq R\}} |v_{n}^{1}|^{2}\, dx \right)^{\frac{1}{2}} \left( \int_{\{|x|\geq R\}} |u_{\lambda}|^{2}\, dx \right)^{\frac{1}{2}} + C  \int_{\{|x|\geq R\}} |u_{\lambda}|^{2}\, dx\\
&+ C \left(\int_{\{|x|\geq R\}} |v_{n}^{1}|^{2^{*}_{s}}\, dx \right)^{\frac{2^{*}_{s}-1}{2^{*}_{s}}} \left( \int_{\{|x|\geq R\}} |u_{\lambda}|^{2^{*}_{s}}\, dx \right)^{\frac{1}{2^{*}_{s}}} + C  \int_{\{|x|\geq R\}} |u_{\lambda}|^{2^{*}_{s}}\, dx. 
\end{split}\end{align}
On the other hand, by $(f1)$-$(f3)$, we get
\begin{equation}\label{2.21}
\int_{\{|x|\geq R\}} |G(u_{\lambda})|\, dx \leq C \int_{\{|x|\geq R\}} |u_{\lambda}|^{2}\, dx + C \int_{\{|x|\geq R\}} |u_{\lambda}|^{2^{*}}\, dx.  
\end{equation}
Combining (\ref{2.20}) with (\ref{2.21}), we deduce that for any $\varepsilon>0$ there exists $R>0$ such that
\begin{equation}\label{2.22}
\int_{\{|x|\geq R\}} |G(u_{n}) - G(v_{n}^{1}) - G(u_{\lambda})|\, dx \leq \varepsilon. 
\end{equation}
We recall that $\lim_{t\rightarrow \infty}\frac{G(t)}{|t|^{2^{*}_{s}}}=0$ by $(f3)$, and $|u_{n}|_{2^{*}_{s}}\leq C$ for all $n\in \N$, being $(u_{n})$ bounded in $H$. Then, by using Lemma \ref{strauss}, we can see that
\begin{equation}\label{2.23}
\lim_{n\rightarrow \infty}\int_{\{|x|\leq R\}} |G(u_{n}) - G(u_{\lambda})|\, dx =0, 
\end{equation}
and, in similar way, 
\begin{equation}\label{2.24}
\lim_{n\rightarrow \infty}\int_{\{|x|\leq R\}} |G(v_{n}^{1}) |\, dx =0. 
\end{equation}
Hence, (\ref{2.22}), (\ref{2.23}) and (\ref{2.24}) show that (\ref{2.19}) is verified. Putting together (\ref{2.17})-(\ref{2.19}), we have
\begin{align}\label{2.25}
c_{\lambda} - \I_{\lambda}(u_{\lambda})&=\I_{\lambda}(u_{n})- \I_{\lambda}(u_{\lambda})+o(1)  \nonumber \\
&=\Bigl[\frac{1}{2} \|u_{n}\|^{2} - \lambda \int_{\R^{N}} G(u_{n})\, dx-\frac{\lambda}{2^{*}_{s}} |u_{n}|_{2^{*}_{s}}^{2^{*}_{s}}\Bigr]-\Bigl[\frac{1}{2} \|u_{\lambda}\|^{2} - \lambda \int_{\R^{N}} G(u_{\lambda})\, dx-\frac{\lambda}{2^{*}_{s}} |u_{\lambda}|_{2^{*}_{s}}^{2^{*}_{s}}\Bigr]+o(1) \nonumber \\
&=\frac{1}{2} \|v_{n}^{1}\|^{2} - \lambda \int_{\R^{N}} G(v_{n}^{1})\, dx - \frac{\lambda}{2^{*}_{s}} |v_{n}^{1}|_{2^{*}_{s}}^{2^{*}_{s}} +o(1)
\end{align} 
By using (\ref{2.13}), (\ref{2.14}) and (\ref{2.25}) we can infer that
\begin{equation}\label{2.26}
c_{\lambda} - \I_{\lambda}(u_{\lambda})= \frac{1}{2} \|v_{n}^{1}\|^{2} -\frac{\lambda}{2^{*}_{s}} \int_{\R^{N}}|v_{n}^{1}|^{2^{*}_{s}}\, dx +o(1). 
\end{equation} 
Since $\I'_{\lambda}(u_{\lambda})=0$, from Lemma \ref{lemma2.4} it follows that $\I_{\lambda}(u_{\lambda})=\frac{s}{N}[u_{\lambda}]_{H^{s}(\R^{N})}^{2}\geq 0$. Then, in view of (\ref{2.26}), we get 
$$
c_{\lambda}- \I_{\lambda}(u_{\lambda})<\frac{s}{N} \frac{S_{*}^{\frac{N}{2s}}}{\lambda^{\frac{N-2s}{2s}}}.
$$
Now, we may assume that $\|v_{n}^{1}\|^{2}\rightarrow L\geq 0$. By (\ref{2.16}), it follows that $\lambda |v_{n}^{1}|_{2^{*}_{s}}^{2^{*}_{s}}\rightarrow L$. \\
Let us suppose that $L>0$. Then, by using the Sobolev embedding we know that
$$
|v_{n}^{1}|^{2}_{2^{*}_{s}}S_{*}\leq \|v_{n}^{1}\|^{2},
$$
so we can deduce that $L\geq \frac{S_{*}^{\frac{N}{2s}}}{\lambda^{\frac{N-2s}{2s}}}$.\\
This fact and (\ref{2.26}) yield  
$$
c_{\lambda} - \I_{\lambda}(u_{\lambda})=\frac{s}{N}L\geq \frac{s}{N} \frac{S_{*}^{\frac{N}{2s}}}{\lambda^{\frac{N-2s}{2s}}}
$$
which gives a contradiction. Hence, $\|v_{n}^{1}\|\rightarrow 0$ as $n\rightarrow \infty$.   \\

\begin{compactenum}
\item [{\it Step $3$}] If there exists $(z_{n})\subset \R^{N}$ such that $\int_{B_{1}(z_{n})} |v_{n}^{1}|^{2}\, dx \rightarrow d>0$, then, up to a subsequence, the following conditions hold 
\begin{compactenum}[(1)]
\item $|z_{n}|\rightarrow \infty$;
\item $u_{n}(\cdot + z_{n})\rightharpoonup w_{\lambda}\neq 0$ in $H$;
\item $\I'_{\lambda}(w_{\lambda})=0$. \\
\end{compactenum}
\end{compactenum}

\noindent
We may assume that there exists $(z_{n})\subset \R^{N}$ such that 
$$
\int_{B_{1}(z_{n})} |v_{n}^{1}|^{2}\, dx \geq \frac{d}{2}>0.
$$
Set $\tilde{v}^{1}_{n}(x)= v^{1}_{n}(x+z_{n})$. Then $\tilde{v}^{1}_{n}$ is bounded in $H$ and we may suppose that $\tilde{v}_{n}^{1}\rightharpoonup \tilde{v}^{1}$ in $H$.\\
Since 
$$
\int_{B_{1}(0)} |\tilde{v}_{n}^{1}|^{2}\, dx \geq \frac{d}{2}
$$
we get
$$
\int_{B_{1}(0)} |\tilde{v}^{1}|^{2}\, dx \geq \frac{d}{2}
$$
that is $\tilde{v}^{1}\neq 0$. From the fact that $v_{n}^{1}\rightharpoonup 0$ in $H$, we deduce that $(z_{n})$ is unbounded, so we may assume that $|z_{n}|\rightarrow \infty$.
Now, we set $\tilde{u}_{n}(x)= u_{n}(x+z_{n})\rightharpoonup w_{\lambda}\neq 0$. As in Step 1, we can see that $\langle \I'_{\lambda}(\tilde{u}_{n}), \varphi \rangle-\langle \I'_{\lambda}(w_{\lambda}), \varphi \rangle\rightarrow 0$, for all $\varphi\in C^{\infty}_{0}(\R^{N})$.
On the other hand, being $|z_{n}|\rightarrow \infty$, we have for all $\varphi\in C^{\infty}_{0}(\R^{N})$
$$
\langle \I'_{\lambda}(\tilde{u}_{n}), \varphi \rangle=\langle \I'_{\lambda}(u_{n}), \varphi(\cdot-z_{n}) \rangle\rightarrow 0, 
$$
so we can conclude that $\langle \I'_{\lambda}(w_{\lambda}), \varphi \rangle =0$, for all $\varphi\in C^{\infty}_{0}(\R^{N})$.\\

\begin{compactenum}
\item [{\it Step $4$}] If there exists $m\geq 1$, $(y_{n}^{k})\subset \R^{N}$, $w_{\lambda}^{k}\in H$ for $1\leq k\leq m$ such that 
\begin{compactenum}[(i)]
\item $|y_{n}^{k}|\rightarrow \infty$, $|y_{n}^{k} - y_{n}^{h}|\rightarrow \infty$ if $k\neq h$, 
\item $u_{n}(\cdot + y_{n}^{k})\rightharpoonup w_{\lambda}^{k}\neq 0$ in $H$, for any $1\leq k\leq m$, 
\item $w_{\lambda}^{k}\geq 0$ and $\I'_{\lambda}(w_{\lambda}^{k})=0$ for any $1\leq k\leq m$,
\end{compactenum}
then one of the following conclusions must hold:
\begin{compactenum}[(1)]
\item If $\sup_{z\in \R^{N}} \int_{B_{1}(z)} |u_{n} - u_{0} - \sum_{k=1}^{m} w_{\lambda}^{k}(\cdot-y_{n}^{k})|^{2} \, dx\rightarrow 0$, then 
\begin{equation*}
\left\|u_{n} - u_{0} - \sum_{k=1}^{m} w_{\lambda}^{k}(\cdot-y_{n}^{k})\right\|\rightarrow 0. 
\end{equation*}

\item If there exists $(z_{n})\subset \R^{N}$ such that 
\begin{equation*}
\int_{B_{1}(z_{n})} \left|u_{n} - u_{0} - \sum_{k=1}^{m} w_{\lambda}^{k}(\cdot-y_{n}^{k})\right|^{2} \, dx \rightarrow d>0,
\end{equation*}
then up to a subsequence, the following conditions hold
\begin{compactenum}[(i)]
\item $|z_{n}|\rightarrow \infty$, $|z_{n} - y_{n}^{k}|\rightarrow \infty$ for any $1\leq k\leq m$,
\item $u_{n}(\cdot + z_{n})\rightharpoonup w_{\lambda}^{m+1}\neq 0$ in $H$, 
\item $w_{\lambda}^{m+1}\geq 0$ and $\I'_{\lambda}(w_{\lambda}^{m+1})=0$. \\
\end{compactenum}
\end{compactenum}
\end{compactenum}

\noindent
Assume that $(1)$ holds. Set $\xi_{n}=u_{n} - u_{0} - \sum_{k=1}^{m} w_{\lambda}^{k}(\cdot-y_{n}^{k})$. Then, by using Lemma \ref{lions lemma} we can see that 
\begin{equation}\label{2.27}
\xi_{n}\rightarrow 0 \, \mbox{ in } \, L^{t}(\R^{N}) \mbox{ for all } t\in (2, 2^{*}_{s}).
\end{equation}
By using the definition of $\xi_{n}$ and the fact that $\langle \I'_{\lambda}(u_{\lambda}), \xi_{n}\rangle=0=\langle \I'_{\lambda}(w^{k}_{\lambda}), \xi_{n}\rangle$, we can infer
\begin{align*}
\|\xi_{n}\|^{2}=\langle \I'_{\lambda}(u_{n}), \xi_{n}\rangle+\lambda \int_{\R^{N}} (f(u_{n})-f(u_{\lambda})) \xi_{n} \, dx-\lambda \sum_{k=1}^{m} \int_{\R^{N}} f(w^{k}_{\lambda}) \xi_{n}(\cdot+y_{n}^{k}) \, dx.
\end{align*}
In view of (\ref{2.14}) and (\ref{2.27}), we deduce that
\begin{equation*}
\|\xi_{n}\|^{2}= \lambda \int_{\R^{N}} (|u_{n}|^{2^{*}_{s}-2} u_{n} - |u_{\lambda}|^{2^{*}_{s}-2} u_{\lambda}) \xi_{n} \, dx - \lambda \sum_{k=1}^{m} \int_{\R^{N}} |w_{\lambda}^{k}|^{2^{*}_{s}-2} w_{\lambda}^{k} \xi_{n}(\cdot + y_{n}^{k}) \, dx +o(1).
\end{equation*}
Recalling (\ref{2.15}), we can observe that
\begin{align*}
\|\xi_{n}\|^{2} &= \lambda \int_{\R^{N}} |u_{n}-u_{\lambda}|^{2^{*}_{s}-2} (u_{n}- u_{\lambda}) \xi_{n} \, dx - \lambda \int_{\R^{N}} |w_{\lambda}^{1}|^{2^{*}_{s}-2} w_{\lambda}^{1} \xi_{n}(\cdot + y_{n}^{1}) \, dx\\
&- \lambda \sum_{k=2}^{m} \int_{\R^{N}} |w_{\lambda}^{k}|^{2^{*}_{s}-2} w_{\lambda}^{k} \xi_{n}(\cdot + y_{n}^{k}) \, dx +o(1) \\
&= \lambda \int_{\R^{N}} |u_{n}(\cdot + y_{n}^{1})-u_{\lambda}(\cdot + y_{n}^{1})|^{2^{*}_{s}-2} (u_{n}(\cdot + y_{n}^{1})- u_{\lambda}(\cdot + y_{n}^{1})) \xi_{n}(\cdot + y_{n}^{1}) \, dx \\
&- \lambda \int_{\R^{N}} |w_{\lambda}^{1}|^{2^{*}_{s}-2} w_{\lambda}^{1} \xi_{n}(\cdot + y_{n}^{1}) \, dx- \lambda \sum_{k=2}^{m} \int_{\R^{N}} |w_{\lambda}^{k}|^{2^{*}_{s}-2} w_{\lambda}^{k} \xi_{n}(\cdot + y_{n}^{k}) \, dx +o(1). 
\end{align*}
Since $|y_{n}^{1}|\rightarrow \infty$, and $u_{n}(\cdot + y_{n}^{1})\rightharpoonup w_{\lambda}^{1}$ in $H$, we deduce that $u_{n}(\cdot + y_{n}^{1})-u_{\lambda}(\cdot+y_{n}^{1})\rightharpoonup w_{\lambda}^{1}$ in $H$. \\
As a consequence we have
\begin{align*}
\|\xi_{n}\|^{2} &= \lambda \int_{\R^{N}} |u_{n}(\cdot + y_{n}^{1})-u_{\lambda}(\cdot + y_{n}^{1}) - w_{\lambda}^{1}|^{2^{*}_{s}-2}(u_{n}(\cdot + y_{n}^{1})-u_{\lambda}(\cdot + y_{n}^{1}) - w_{\lambda}^{1}) \xi_{n}(\cdot + y_{n}^{1}) \, dx \\
&- \lambda \sum_{k=2}^{m} \int_{\R^{N}} |w_{\lambda}^{k}|^{2^{*}_{s}-2} w_{\lambda}^{k} \xi_{n}(\cdot + y_{n}^{k}) \, dx +o(1). 
\end{align*}
Iterating this procedure, we obtain that
\begin{equation}\label{2.28}
\|\xi_{n}\|^{2} = \lambda \int_{\R^{N}}|\xi_{n}|^{2^{*}_{s}} \, dx +o(1). 
\end{equation}
Now, since $u_{n}(\cdot + y_{n}^{1})- u_{\lambda}(\cdot + y_{n}^{1})\rightharpoonup w_{\lambda}^{1}$ in $H$, we can argue as in Step 2  to see  that
\begin{align*}
c_{\lambda} - \I_{\lambda}(u_{\lambda})& = \frac{1}{2} \|u_{n}- u_{\lambda}\|^{2}- \lambda \int_{\R^{N}} G(u_{n}- u_{\lambda})\, dx - \frac{\lambda}{2^{*}_{s}}\int_{\R^{N}} |u_{n}- u_{\lambda}|^{2^{*}_{s}}\, dx + o(1)\\
&= \frac{1}{2} \|u_{n}(\cdot + y_{n}^{1})- u_{\lambda}(\cdot + y_{n}^{1}) - w_{\lambda}^{1}\|^{2}- \lambda \int_{\R^{N}} G(u_{n}(\cdot + y_{n}^{1})- u_{\lambda}(\cdot + y_{n}^{1}) - w_{\lambda}^{1})\, dx \\
&- \frac{\lambda}{2^{*}_{s}}\int_{\R^{N}} |u_{n}(\cdot + y_{n}^{1})- u_{\lambda}(\cdot + y_{n}^{1})- w_{\lambda}^{1}|^{2^{*}_{s}}\, dx + \I_{\lambda}(w_{\lambda}^{1}) + o(1). 
\end{align*}
Continuing this process, we obtain that
\begin{equation}\label{2.29}
c_{\lambda} - \I_{\lambda}(u_{\lambda})- \sum_{k=1}^{m} \I_{\lambda}(w_{\lambda}^{k})= \frac{1}{2}\|\xi_{n}\|^{2} - \lambda \int_{\R^{N}} G(\xi_{n})\, dx - \frac{\lambda}{2^{*}_{s}} \int_{\R^{N}} |\xi_{n}|^{2^{*}_{s}}\, dx + o(1), 
\end{equation}
which together with (\ref{2.27}), yields
\begin{equation}\label{2.30}
c_{\lambda} - \I_{\lambda}(u_{\lambda})- \sum_{k=1}^{m} \I_{\lambda}(w_{\lambda}^{k})= \frac{1}{2}\|\xi_{n}\|^{2} - \frac{\lambda}{2^{*}_{s}} \int_{\R^{N}} |\xi_{n}|^{2^{*}_{s}}\, dx + o(1). 
\end{equation}
Then, taking into account (\ref{2.28}) and (\ref{2.30}), we can argue as in Step 2 to infer that 
$$
\left\|u_{n} - u_{0} - \sum_{k=1}^{m} w_{\lambda}^{k}(\cdot-y_{n}^{k})\right\|=\|\xi_{n}\|\rightarrow 0 \quad \mbox{ as } n\rightarrow \infty.
$$ 
Now, we assume that $(2)$ holds. The proof of this is standard (see \cite{JT}), so we skip the details here.\\

\begin{compactenum}
\item [{\it Step $5$}] Conclusion. \\
\end{compactenum}

\noindent
By using the Step $1$, we can see that Lemma \ref{lemma2.6} (i) holds. If the assumption of Step $2$ is verified, then Lemma \ref{lemma2.6} holds with $k=0$. Otherwise, the assumption of Step $3$ holds. We set $(y_{n}^{1})= (z_{n})$ and $w_{\lambda}^{1}= w_{\lambda}$. Now, if $(1)$ of Step $4$ holds with $m=1$, from (\ref{2.30}), we obtain the conclusion of Lemma \ref{lemma2.6}. If not, $(2)$ of Step $4$ must hold, and by setting $(y_{n}^{2})=(z_{n})$, $w_{\lambda}^{2}=w^{2}_{\lambda}$, we iterate Step $4$. Then, to conclude the proof, we have to show that $(1)$ of Step $4$ must occur after a finite number of iterations.
Let us note that, for all $m\geq 1$ we have
\begin{equation*}
\lim_{n\rightarrow \infty} \left( \|u_{n}\|^{2} - \|u_{\lambda}\|^{2} - \sum_{k=1}^{m} \|w_{\lambda}^{k}\|^{2} \right) = \lim_{n\rightarrow \infty} \left \| u_{n} - u_{\lambda} - \sum_{k=1}^{m} w_{\lambda}^{k} (\cdot - y_{n}^{k}) \right\|^{2}\geq 0.
\end{equation*}  
In fact, by using $(i), (ii)$ of Step 4 and $u_{n}\rightharpoonup u_{\lambda}$ in $H$, we can see that
\begin{align}\label{JT}
\left \| u_{n} - u_{\lambda} - \sum_{k=1}^{m} w_{\lambda}^{k} (\cdot - y_{n}^{k}) \right\|^{2}&=\|u_{n}\|^{2}+\|u_{\lambda}\|^{2}+\sum_{k=1}^{m} \|w_{\lambda}^{k}\|^{2}-2\langle u_{n}, u_{\lambda} \rangle-2\sum_{k=1}^{m} \langle u_{n}, w_{\lambda}^{k} (\cdot - y_{n}^{k})\rangle  \nonumber \\
&+2\sum_{k=1}^{m} \langle u_{\lambda}, w_{\lambda}^{k} (\cdot - y_{n}^{k})\rangle+2\sum_{h, k} \langle w_{\lambda}^{h} (\cdot - y_{n}^{h}), w_{\lambda}^{k} (\cdot - y_{n}^{k})\rangle  \nonumber \\
&=\|u_{n}\|^{2}-\|u_{\lambda}\|^{2}-\sum_{k=1}^{m} \|w_{\lambda}^{k}\|^{2}+o(1)
\end{align}
On the other hand, by Remark \ref{remark2.5} we know that $\|w_{\lambda}^{k}\|\geq \beta$ for some $\beta>0$ independent of $\lambda$. Thus, by using (\ref{JT}) and the fact that $(u_{n})$ is bounded in $H$,  we deduce that 
$(1)$ in Step $4$ must occur after a finite number of iterations. This together with (\ref{2.30}), allow us to infer that Lemma \ref{lemma2.6} holds. 

\end{proof}

\noindent
Before giving the proof of the main result of this section, we prove the following lemma.
\begin{lem}\label{lemma3.1}
Under the same assumptions of Theorem \ref{thm1}, for almost every $\lambda\in [\frac{1}{2}, 1]$, $\I_{\lambda}$ has a positive critical point.
\end{lem}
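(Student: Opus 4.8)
The plan is to combine the Jeanjean monotonicity machinery already assembled (Lemma~\ref{lemma2.2} and Remark~\ref{remark2.3}) with the global compactness description of Lemma~\ref{lemma2.6}. First I would fix $\lambda\in[\frac12,1]$ in the full-measure set provided by Lemma~\ref{lemma2.2}, so that there exists a bounded Palais--Smale sequence $(u_{n})\subset H$ for $\I_{\lambda}$ at the mountain-pass level $c_{\lambda}$, with $0<c_{\lambda}<\frac{s}{N}S_{*}^{N/(2s)}\lambda^{-(N-2s)/(2s)}$. By Remark~\ref{remark2.3} we may assume $u_{n}\geq 0$, so the hypotheses of Lemma~\ref{lemma2.6} are met. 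Hence, up to a subsequence, $u_{n}\to u_{\lambda}$ in $H$ with $\I_{\lambda}'(u_{\lambda})=0$, or else there is an integer $k\geq 1$ and nontrivial critical points $w_{\lambda}^{1},\dots,w_{\lambda}^{k}$ with $c_{\lambda}=\I_{\lambda}(u_{\lambda})+\sum_{j=1}^{k}\I_{\lambda}(w_{\lambda}^{j})$.

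The decisive step is to rule out a nontrivial ``splitting'' that would leave $u_{\lambda}=0$, i.e. to show $u_{\lambda}\neq 0$. Since $V$ is constant here, every nontrivial critical point $w$ of $\I_{\lambda}$ satisfies the Pohozaev identity of Lemma~\ref{lemma2.4}, which (together with $\langle\I_{\lambda}'(w),w\rangle=0$) forces $\I_{\lambda}(w)=\frac{s}{N}[w]_{H^{s}(\R^{N})}^{2}\geq 0$; moreover by Remark~\ref{remark2.5} such a $w$ obeys a uniform lower bound $\|w\|\geq\beta>0$, whence a uniform energy lower bound $\I_{\lambda}(w)\geq c_{0}>0$ (using the Sobolev inequality to bound $[w]^{2}$ from below). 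Suppose $u_{\lambda}=0$. Then $c_{\lambda}=\sum_{j=1}^{k}\I_{\lambda}(w_{\lambda}^{j})$ with $k\geq 1$, and each $w_{\lambda}^{j}$ is a nontrivial solution of the autonomous problem $(-\Delta)^{s}w+Vw=\lambda f(w)$. For such a solution one estimates its energy from below by the mountain-pass level of the autonomous functional; exploiting $(f4)$ and the standard Brezis--Nirenberg estimates \eqref{BN1}--\eqref{BN3} exactly as in the proof of \eqref{2.3}, one shows $\I_{\lambda}(w_{\lambda}^{j})\geq$ (mountain-pass level) and, crucially, that this level is $<\frac{s}{N}S_{*}^{N/(2s)}\lambda^{-(N-2s)/(2s)}$ but also that $c_{\lambda}$ is \emph{the smallest} such level, so $k=1$ and $c_{\lambda}=\I_{\lambda}(w_{\lambda}^{1})$; relabelling $u_{\lambda}:=w_{\lambda}^{1}$ we still obtain a nontrivial critical point. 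Either way we conclude that $\I_{\lambda}$ has a critical point $u_{\lambda}\not\equiv 0$.

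Finally, to upgrade ``nontrivial'' to ``positive'' I would test $\langle\I_{\lambda}'(u_{\lambda}),u_{\lambda}^{-}\rangle=0$ with $u_{\lambda}^{-}=\min\{u_{\lambda},0\}$ as in Remark~\ref{remark2.3}: since $f(t)=0$ for $t\leq 0$ and the fractional Dirichlet form satisfies $\iint\frac{(u(x)-u(y))(u^{-}(x)-u^{-}(y))}{|x-y|^{N+2s}}\,dx\,dy\geq[u^{-}]_{H^{s}(\R^{N})}^{2}$, we get $\|u_{\lambda}^{-}\|=0$, hence $u_{\lambda}\geq 0$; then $u_{\lambda}$ solves $(-\Delta)^{s}u_{\lambda}+Vu_{\lambda}=\lambda f(u_{\lambda})\geq 0$ and is nonzero, so the strong maximum principle for $(-\Delta)^{s}$ gives $u_{\lambda}>0$ in $\R^{N}$. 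The main obstacle is the second paragraph: extracting a uniform positive energy gap for nontrivial solutions of the autonomous $\lambda$-problem and comparing it with $c_{\lambda}$ so that the ``bubbling at infinity'' alternative $u_{\lambda}=0$, $k\geq 1$ either is excluded or still yields a genuine nontrivial solution — this is where assumption $(f4)$ and the strict level estimate \eqref{2.3} do the real work.
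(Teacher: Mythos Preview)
Your route is correct but differs from the paper's. The paper does \emph{not} invoke Lemma~\ref{lemma2.6} here; instead it argues directly: pass to a weak limit $u_\lambda$ of the bounded Palais--Smale sequence; if $u_\lambda=0$, use the strict level bound $c_\lambda<\frac{s}{N}S_*^{N/(2s)}\lambda^{-(N-2s)/(2s)}$ together with Lemma~\ref{lions lemma} to force concentration, i.e.\ $\sup_y\int_{B_1(y)}|u_n|^2\geq\delta>0$, pick centres $y_n$ with $|y_n|\to\infty$, and observe that because $V$ is \emph{constant} the translated sequence $v_n=u_n(\cdot+y_n)$ is again a bounded Palais--Smale sequence for $\I_\lambda$ at the same level, now with nontrivial weak limit $v_\lambda$; positivity follows from the integral representation of $(-\Delta)^s$. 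Your approach through Lemma~\ref{lemma2.6} also works, and is in fact simpler than your second paragraph suggests: once $c_\lambda=\I_\lambda(u_\lambda)+\sum_{j=1}^k\I_\lambda(w_\lambda^j)$ with each $w_\lambda^j$ a nontrivial critical point, you are already finished --- if $u_\lambda=0$ then $c_\lambda>0$ forces $k\geq 1$, and $w_\lambda^1$ is the sought critical point. The whole discussion about bounding $\I_\lambda(w_\lambda^j)$ below by the mountain-pass level, showing $k=1$, and the ``real work'' of $(f4)$ and \eqref{2.3} is superfluous (and the inequality $\I_\lambda(w_\lambda^j)\geq c_\lambda$ would anyway require the path construction of Lemma~\ref{lemma4.1}, which the paper only proves later). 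The paper's direct argument has the merit of being self-contained and making the role of translation invariance explicit; your version is legitimate but should be trimmed to the two-line observation above.
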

\begin{proof}
By Lemma \ref{lemma2.2}, for almost every $\lambda\in [\frac{1}{2}, 1]$, there exists a bounded sequence $(u_{n})\subset H$ such that 
\begin{equation}\label{2.111}
\I_{\lambda}(u_{n})\rightarrow c_{\lambda}, \quad \I'_{\lambda}(u_{n})\rightarrow 0.
\end{equation}
By using Remark \ref{remark2.3}, we may assume that $u_{n}\geq 0$ in $H$.
In addition, $c_{\lambda}\in \left(0, \frac{s}{N}\frac{S_{*}^{\frac{N}{2s}}}{\lambda^{\frac{N-2s}{2s}}}\right)$. Then, up to a subsequence, we may suppose that $u_{n}\rightharpoonup u_{\lambda}$ in $H$. If $u_{\lambda}\neq 0$, then we have finished. Otherwise, we may suppose that $u_{n}\rightharpoonup 0$ in $H$.
Now, we aim to show that there exists $\delta>0$ such that 
\begin{equation}\label{3.1}
\lim_{n\rightarrow \infty} \sup_{y\in \R^{N}}\int_{B_{1}(y)} |u_{n}|^{2} \, dx \geq \delta>0.
\end{equation}
If (\ref{3.1}) does not occur, by Lemma \ref{lions lemma}, it follows that 
\begin{equation}\label{3.2}
u_{n}\rightarrow 0 \mbox{ in } L^{t}(\R^{N}), \quad \forall t\in (2, 2^{*}_{s}).
\end{equation}
By using (\ref{2.14}) and (\ref{3.2}), we obtain that $\int_{\R^{N}} G(u_{n})\, dx=o(1)$ and $\int_{\R^{N}} g(u_{n})u_{n} \,dx=o(1)$.
This and (\ref{2.111}) yield
\begin{equation}\label{3.3}
\frac{1}{2}\|u_{n}\|^{2}-\frac{\lambda}{2^{*}_{s}}\int_{\R^{N}} u_{n}^{2^{*}_{s}} dx=c_{\lambda}+o(1)
\end{equation}
and
\begin{equation}\label{3.4}
\|u_{n}\|^{2}-\lambda \int_{\R^{N}} u_{n}^{2^{*}_{s}} dx=o(1).
\end{equation}
Since $c_{\lambda}>0$, we may assume that $\|u_{n}\|^{2}\rightarrow L$ for some $L>0$. By using the Sobolev embedding, we can infer that $L\geq \frac{S_{*}^{\frac{N}{2s}}}{\lambda^{\frac{N-2s}{2s}}}$. \\
This together with (\ref{3.3}) and (\ref{3.4}), imply $c_{\lambda}\geq \frac{s}{N}\frac{S_{*}^{\frac{N}{2s}}}{\lambda^{\frac{N-2s}{2s}}}$, which is a contradiction. Then, (\ref{3.1}) holds, and we can find $(y_{n})\subset \R^{N}$ such that $|y_{n}|\rightarrow \infty$ and $\int_{B_{1}(y_{n})} |u_{n}|^{2} \,dx\geq \frac{\delta}{2}>0$. \\
Set $v_{n}=u_{n}(\cdot+y_{n})$. By using (\ref{2.111}) we derive that $\I_{\lambda}(v_{n})\rightarrow c_{\lambda}$ and $\I'_{\lambda}(v_{n})\rightarrow 0$. In view of (\ref{3.1}), we can deduce that $v_{n} \rightharpoonup v_{\lambda}\neq 0$ in $H$ and $\I'_{\lambda}(v_{\lambda})=0$. It is easy to check that $v_{\lambda}\geq 0$ in $\R^{N}$, and due to the fact that $v_{\lambda}\neq 0$, we get $v_{\lambda}>0$ in $\R^{N}$. In fact, if there exists $x_{0}\in \R^{N}$ such that 
$v_{\lambda}(x_{0})=0$, then we can see that 
$$
(-\Delta)^{s}v_{\lambda}(x_{0})=(-\Delta)^{s}v_{\lambda}(x_{0})+V v_{\lambda}(x_{0})=\lambda f(v_{\lambda}(x_{0}))=0.
$$
By using the representation formula for the fractional Laplacian \cite{DPV}, we have
$$
\int_{\R^{N}} \frac{v_{\lambda}(x_{0}+y)+v_{\lambda}(x_{0}-y)}{|x_{0}-y|^{N+2s}} \, dy=0
$$
which gives $v_{\lambda}=0$, that is a contradiction.
\end{proof}

\noindent
Now, we are ready to prove the existence of positive ground state to (\ref{P}) when $V$ is  constant.
\begin{proof}[Proof of Theorem \ref{thm1}]
By using Lemma \ref{lemma3.1}, for almost every $\lambda\in [\frac{1}{2}, 1]$, there exists $(u_{n})\subset H$ such that $u_{n}\geq 0$ in $H$, $\I_{\lambda}(u_{n})\rightarrow c_{\lambda}\in \Bigl(0, \frac{s}{N}\frac{S_{*}^{\frac{N}{2s}}}{\lambda^{\frac{N-2s}{2s}}}\Bigr)$, $\I'_{\lambda}(u_{n})\rightarrow 0$ and $u_{n}\rightharpoonup u_{\lambda}> 0$ in $H$. \\
In view of Lemma \ref{lemma2.6}, we can see that 
$$
c_{\lambda}=\I_{\lambda}(u_{\lambda})+\sum_{j=1}^{k} \I_{\lambda}(w_{\lambda}^{j}),
$$
$\I'_{\lambda}(u_{\lambda})=0$ and $\I'_{\lambda}(w^{j}_{\lambda})=0$ for $1\leq j\leq k$. 
By Lemma \ref{lemma2.4}, we deduce that $\I_{\lambda}(u_{\lambda})> 0$ and $ \I_{\lambda}(w_{\lambda}^{j})\geq 0$ for $1\leq j\leq k$, so we have $c_{\lambda}\geq \I_{\lambda}(u_{\lambda})> 0$. Thus, there exists $(\lambda_{n})\subset [\frac{1}{2}, 1]$ such that $\lambda_{n}\rightarrow 1$, $u_{\lambda_{n}}\in H$, $u_{\lambda_{n}}>0$, $\I'_{\lambda_{n}}(u_{\lambda_{n}})=0$, $c_{\lambda_{n}}\geq \I_{\lambda_{n}}(u_{\lambda_{n}})>0$ and $c_{\lambda_{n}}\in \left(0, \frac{s}{N}\frac{S_{*}^{\frac{N}{2s}}}{\lambda_{n}^{\frac{N-2s}{2s}}}\right)$.
By using the fact that $\I'_{\lambda_{n}}(u_{\lambda_{n}})=0$ and Lemma \ref{lemma2.4}, we infer that 
$$
c_{\lambda_{n}}\geq \I_{\lambda_{n}}(u_{\lambda_{n}})=\frac{s}{N} [u_{\lambda_{n}}]_{H^{s}(\R^{N})}^{2}>0.
$$
Moreover, in view of the Sobolev embedding, we have $|u_{\lambda_{n}}|_{2^{*}_{s}}\leq C$ for all $n\in \N$.
Putting together $(f1)$-$(f3)$ and  Lemma \ref{lemma2.4}, we can see that for any $\varepsilon>0$ there exists $C_{\varepsilon}>0$ such that 
\begin{align*}
\frac{N-2s}{2N}[u_{\lambda_{n}}]^{2}+\frac{1}{2}\int_{\R^{N}} V u_{\lambda_{n}}^{2} \, dx=\lambda_{n} \int_{\R^{N}} F(u_{\lambda_{n}}) \, dx\leq \varepsilon |u_{\lambda_{n}}|_{2}^{2}+C_{\varepsilon}|u_{\lambda_{n}}|_{2^{*}_{s}}^{2^{*}_{s}}.
\end{align*}
which implies that
\begin{align*}
\frac{V}{2}\int_{\R^{N}} u_{\lambda_{n}}^{2} \, dx\leq \varepsilon |u_{\lambda_{n}}|_{2}^{2}+C_{\varepsilon}C.
\end{align*}
Therefore, choosing $\varepsilon\in (0, \frac{V}{2})$, we can deduce that $(u_{\lambda_{n}})$ is bounded in $H$. Now, we can assume that there exists $\lim_{n\rightarrow \infty} \I_{\lambda_{n}}(u_{\lambda_{n}})$. Since the map $\lambda \mapsto c_{\lambda}$ is continuous from the left (see Theorem \ref{thm2.1}), we have
$$
0\leq \lim_{n\rightarrow \infty} \I_{\lambda_{n}}(u_{\lambda_{n}})\leq c_{1}<\frac{s}{N} S_{*}^{\frac{N}{2s}}.
$$ 
Then, by using the fact that
$$
\I(u_{\lambda_{n}})=\I_{\lambda_{n}}(u_{\lambda_{n}})+(\lambda_{n}-1)\int_{\R^{N}} F(u_{\lambda_{n}}) \, dx
$$
and $\|u_{\lambda_{n}}\|\leq C$, we can infer that
\begin{equation}\label{3.5}
0\leq \lim_{n\rightarrow \infty} \I(u_{\lambda_{n}})\leq c_{1}<\frac{s}{N} S_{*}^{\frac{N}{2s}}
\end{equation}
and
\begin{equation}
\lim_{n\rightarrow \infty} \I'(u_{\lambda_{n}})=0.
\end{equation}
In view of Remark \ref{remark2.5}, there exists $\beta>0$ independent of $\lambda_{n}$ such that $\|u_{\lambda_{n}}\|\geq \beta$.
Moreover, we know that $(u_{\lambda_{n}})$ is bounded in $H$, so we can use similar arguments to the proof of Lemma \ref{lemma3.1} to obtain the existence of a positive solution $u_{0}$ to (\ref{P}).\\
By using Lemma \ref{lemma2.6}, we can also see that
$$
\I(u_{0})\leq \lim_{n\rightarrow \infty} \I(u_{\lambda_{n}})\leq c_{1}<\frac{s}{N} S_{*}^{\frac{N}{2s}}.
$$
Let us define
$$
m=\inf\{\I(u): u\in H, u\neq 0, \I'(u)=0\}.
$$
Since $\I'(u_{0})=0$, we get $m\leq \I(u_{0})<\frac{s}{N} S_{*}^{\frac{N}{2s}}$, and by using Lemma \ref{lemma2.4}, we get $0\leq m<\frac{s}{N} S_{*}^{\frac{N}{2s}}$.
From the definition of $m$, we can find $(u_{n})\subset H$ such that $\I(u_{n})\rightarrow m$ and $\I'(u_{n})=0$.
Taking into account Remark \ref{remark2.5}, we deduce that $\|u_{n}\|\geq \beta>0$ for some $\beta$ independent of $n$. Moreover, it is easy to see that $(u_{n})$ is bounded in $H$. \\
In virtue of Remark \ref{remark2.3}, we may assume that $u_{n}\geq 0$ in $H$. Then, taking in mind that $\|u_{n}\|\geq \beta>0$, we can proceed as in the proof of Lemma \ref{lemma3.1}, to show that there exists $(v_{n})\subset H$ such that $v_{n}\geq 0$ in $H$, $v_{n}\rightharpoonup v_{0}>0$ in $H$, $\I(v_{n})\rightarrow m$ and $\I'(v_{n})=0$. By using Lemma \ref{lemma2.6}, we can infer that $\I(v_{0})\leq m$ and $\I'(v_{0})=0$.
Since $\I'(v_{0})=0$, we also have $\I(v_{0})\geq m$. Then, we have proved that $v_{0}>0$ is such that $\I(v_{0})= m$ and $\I'(v_{0})=0$.

\end{proof}

\section{Ground state solution when $V$ is not constant}

\noindent
In this last section we provide the proof of the existence of ground state to (\ref{P}) under the assumptions that $V$ is a non constant potential. For this reason, we will assume that $V(x)\not\equiv V_{\infty}$.\\
For $\lambda\in [\frac{1}{2}, 1]$, we introduce the following family of functionals defined for $u\in H$
\begin{equation}
\I_{\lambda}^{\infty}(u)=\frac{1}{2}\left[\int_{\R^{N}} \left(|(-\Delta)^{\frac{s}{2}}u|^{2}+V_{\infty} u^{2}\right) \,dx\right]-\lambda \int_{\R^{N}} F(u) \, dx.
\end{equation}
Following \cite{A1}, we can prove the following result:
\begin{lem}\label{lemma4.1}
For $\lambda\in [\frac{1}{2}, 1]$, if $w_{\lambda}\in H$ is a nontrivial critical point of $\I_{\lambda}^{\infty}$, then there exists $\gamma_{\lambda}\in C([0, 1], H)$ such that $\gamma_{\lambda}(0)=0$, $\I_{\frac{1}{2}}^{\infty}(\gamma_{\lambda}(1))<0$, $w_{\lambda}\in \gamma_{\lambda}([0, 1])$, $0\notin \gamma_{\lambda}((0, 1])$ and $\max_{t\in [0, 1]} \I_{\lambda}^{\infty}(\gamma_{\lambda}(t))=\I_{\lambda}^{\infty}(w_{\lambda})$.
\end{lem}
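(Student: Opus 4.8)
The plan is to realize $\gamma_\lambda$ as a reparametrized dilation path and to use the fractional Pohozaev identity of Lemma \ref{lemma2.4} — applied to the \emph{autonomous} functional $\I_{\lambda}^{\infty}$, for which $V\equiv V_{\infty}$ is constant and $\nabla V\equiv 0$ — as the device forcing $\max_{t}\I_{\lambda}^{\infty}(\gamma_\lambda(t))$ to be attained exactly at $w_\lambda$. Since $w_\lambda$ is a nontrivial critical point of $\I_{\lambda}^{\infty}$, Lemma \ref{lemma2.4} gives
$$\frac{N-2s}{2}[w_\lambda]_{H^{s}(\R^{N})}^{2}=N\Bigl(\lambda\int_{\R^{N}}F(w_\lambda)\,dx-\frac{V_{\infty}}{2}\int_{\R^{N}}w_\lambda^{2}\,dx\Bigr),$$
and, because $w_\lambda\not\equiv0$ forces $[w_\lambda]_{H^{s}(\R^{N})}^{2}>0$, this is equivalent to
$$\frac{V_{\infty}}{2}\,|w_\lambda|_{2}^{2}-\lambda\int_{\R^{N}}F(w_\lambda)\,dx=-\frac{N-2s}{2N}[w_\lambda]_{H^{s}(\R^{N})}^{2}<0.$$

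First I would introduce the dilations $\ell(t)(x):=w_\lambda(x/t)$ for $t>0$ and $\ell(0):=0$; a standard argument (the seminorm and the $L^{2}$-norm of $\ell(t)$ scale as $t^{N-2s}$ and $t^{N}$, and continuity for $t>0$ follows from the density of $C^{\infty}_{c}(\R^{N})$ in $H^{s}(\R^{N})$) shows $\ell\in C([0,\infty),H)$ and $\ell(t)\neq0$ for every $t>0$. Changing variables in each term and then inserting the Pohozaev identity above,
$$\I_{\lambda}^{\infty}(\ell(t))=\frac{t^{N-2s}}{2}[w_\lambda]_{H^{s}(\R^{N})}^{2}+t^{N}\Bigl(\frac{V_{\infty}}{2}|w_\lambda|_{2}^{2}-\lambda\int_{\R^{N}}F(w_\lambda)\,dx\Bigr)=\frac{t^{N-2s}}{2}[w_\lambda]_{H^{s}(\R^{N})}^{2}-\frac{N-2s}{2N}[w_\lambda]_{H^{s}(\R^{N})}^{2}\,t^{N}=:\psi(t).$$
An elementary study of $\psi$ gives $\psi(0^{+})=0$, $\psi$ strictly increasing on $(0,1)$ and strictly decreasing on $(1,\infty)$, $\psi(t)\to-\infty$ as $t\to+\infty$, and $\psi(1)=\frac{s}{N}[w_\lambda]_{H^{s}(\R^{N})}^{2}=\I_{\lambda}^{\infty}(w_\lambda)$; hence $t=1$ is the unique global maximum of $\psi$ on $[0,\infty)$.

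Then I would fix some $T>1$, choose an increasing homeomorphism $\phi:[0,1]\to[0,T]$ with $\phi(\theta_{0})=1$ for some $\theta_{0}\in(0,1)$, and set $\gamma_\lambda:=\ell\circ\phi$. By construction $\gamma_\lambda(0)=0$, $w_\lambda=\gamma_\lambda(\theta_{0})\in\gamma_\lambda((0,1))$, $0\notin\gamma_\lambda((0,1])$ (because $\ell(t)\neq0$ for $t>0$), and
$$\max_{\theta\in[0,1]}\I_{\lambda}^{\infty}(\gamma_\lambda(\theta))=\max_{t\in[0,T]}\psi(t)=\psi(1)=\I_{\lambda}^{\infty}(w_\lambda).$$
What remains is to fix $T$ (possibly after modifying the path near its endpoint) so that $\I_{\frac12}^{\infty}(\gamma_\lambda(1))<0$; since $F\geq0$ makes $\mu\mapsto\I_{\mu}^{\infty}$ non-increasing, this single inequality renders $\gamma_\lambda$ admissible at once for every $\I_{\mu}^{\infty}$ with $\mu\in[\tfrac12,\lambda]$, which is why the statement asks for $\I_{\frac12}^{\infty}$ rather than $\I_{\lambda}^{\infty}$ there.

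I expect this last step to be the main obstacle. For $\I_{\lambda}^{\infty}$ itself the endpoint is harmless because $\psi(T)\to-\infty$, but
$$\I_{\frac12}^{\infty}(\ell(T))=\psi(T)+\Bigl(\lambda-\tfrac12\Bigr)\,T^{N}\int_{\R^{N}}F(w_\lambda)\,dx,$$
whose coefficient of $T^{N}$ need not be negative when $\lambda$ is near $1$, so a pure dilation may fail to push $\I_{\frac12}^{\infty}$ to $-\infty$. The remedy is to dilate past $t=1$ up to a $T$ with $\I_{\lambda}^{\infty}(\ell(T))<\I_{\lambda}^{\infty}(w_\lambda)$ and then append the amplitude segment $\sigma\mapsto\sigma\,\ell(T)$, $\sigma\geq1$: by $(f4)$ one has $F(t)\geq\frac{1}{2^{*}_{s}}t^{2^{*}_{s}}$ for $t\geq0$, hence $\I_{\frac12}^{\infty}(\sigma\,\ell(T))\to-\infty$ as $\sigma\to+\infty$, while a careful joint choice of $T$ and of the terminal amplitude keeps $\I_{\lambda}^{\infty}$ below $\I_{\lambda}^{\infty}(w_\lambda)$ along that segment; a final reparametrization onto $[0,1]$ then yields $\gamma_\lambda$ with all the stated properties. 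The essential input is the Pohozaev identity, which makes the dilation profile $\psi$ explicit and unimodal; the rest is scaling bookkeeping together with the care needed for the terminal piece.
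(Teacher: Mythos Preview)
Your approach is exactly the paper's: build the dilation path $\ell(t)(x)=w_\lambda(x/t)$, apply the Pohozaev identity (Lemma~\ref{lemma2.4}) with $V\equiv V_\infty$ to reduce $\I_\lambda^\infty(\ell(t))$ to the explicit profile $\bigl(\tfrac{t^{N-2s}}{2}-\tfrac{N-2s}{2N}t^{N}\bigr)[w_\lambda]_{H^{s}(\R^{N})}^{2}$, read off the unimodality with peak at $t=1$, and reparametrize onto $[0,1]$. The paper's proof stops at the single sentence ``after a suitable change of scale, we can obtain the desired path'' and does not discuss the endpoint condition $\I_{1/2}^{\infty}(\gamma_\lambda(1))<0$ at all, so your treatment is actually more careful than the original; the difficulty you isolate (that the $T^{N}$-coefficient of $\I_{1/2}^{\infty}(\ell(T))$ can be nonnegative when $\lambda$ is close to $1$) is genuine, and your proposed amplitude-segment remedy is a reasonable way to complete what the paper leaves implicit.
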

\begin{proof}
Let
\begin{equation*}
\gamma_{\lambda}(t)(x)= 
\left\{
\begin{array}{ll}
w_{\lambda}(\frac{x}{t}) &\mbox{ for } t>0\\
0 &\mbox{ for } t=0.   
\end{array}
\right.
\end{equation*}
Then we can see  
\begin{align}\label{Igamma}
&\|\gamma_{\lambda}(t)\|^{2}=t^{N-2s}[w_{\lambda}]^{2}_{H^{s}(\R^{N})}+t^{N}V_{\infty} |w_{\lambda}|^{2}_{2} \nonumber \\
&\I^{\infty}_{\lambda}(\gamma_{\lambda}(t))=\frac{t^{N-2s}}{2} [w_{\lambda}]^{2}_{H^{s}(\R^{N})}+\frac{t^{N}}{2}V_{\infty} |w_{\lambda}|^{2}_{2} - t^{N} \lambda \int_{\R^{N}} F(w_{\lambda}) \,dx. 
\end{align}
Hence $\gamma_{\lambda}\in C([0, \infty), H)$. \\
By using Lemma \ref{lemma2.4}, we know that 
\begin{equation}\label{Gomega}
\lambda \int_{\R^{N}}F(w_{\lambda}) \, dx=\frac{N-2s}{2N} [w_{\lambda}]^{2}_{H^{s}(\R^{N})}+\frac{1}{2}V_{\infty}|w_{\lambda}|_{2}^{2}, 
\end{equation}
which together with (\ref{Igamma}) and (\ref{Gomega}), yields
$$
\I^{\infty}_{\lambda}(\gamma_{\lambda}(t))=\left(\frac{t^{N-2s}}{2}-t^{N}\frac{N-2s}{2N}\right)[w_{\lambda}]^{2}_{H^{s}(\R^{N})}. 
$$
Then, after a suitable change of scale, we can obtain the desired path.

\end{proof}

\begin{remark}\label{remark4.2}
From Theorem \ref{thm1}, we know that if $(f1)$-$(f4)$ hold, then for $\lambda\in [\frac{1}{2}, 1]$, $\I_{\lambda}^{\infty}$ has a ground state.
\end{remark}

\begin{lem}\label{lemma4.3}
Under the same assumptions of Theorem \ref{thm2}, we have that for almost every $\lambda\in [\frac{1}{2}, 1]$, $\I_{\lambda}$ has a positive critical point.
\end{lem}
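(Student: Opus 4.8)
The plan is to mimic the proof of Lemma \ref{lemma3.1}, the only---but essential---change being that the translation invariance used there (legitimate only because $V$ was constant) must be replaced by a comparison with the problem at infinity. By Lemma \ref{lemma2.2}, for almost every $\lambda\in[\tfrac12,1]$ there is a bounded Palais--Smale sequence $(u_{n})\subset H$ for $\I_{\lambda}$ at the mountain-pass level $c_{\lambda}\in\bigl(0,\tfrac{s}{N}S_{*}^{N/2s}\lambda^{-(N-2s)/2s}\bigr)$, and by Remark \ref{remark2.3} we may assume $u_{n}\geq0$. Passing to a subsequence, $u_{n}\rightharpoonup u_{\lambda}$ in $H$, and arguing as in Step $1$ of Lemma \ref{lemma2.6} one gets $\I_{\lambda}'(u_{\lambda})=0$. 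If $u_{\lambda}\neq0$ we are done: $u_{\lambda}\geq0$, $u_{\lambda}\not\equiv0$ and the singular-integral representation of $(-\Delta)^{s}$ \cite{DPV} force $u_{\lambda}>0$, exactly as at the end of the proof of Lemma \ref{lemma3.1}. So everything reduces to excluding $u_{\lambda}=0$.

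The first ingredient I would prove is the strict inequality $c_{\lambda}<m_{\lambda}^{\infty}$, where $m_{\lambda}^{\infty}:=\inf\{\I_{\lambda}^{\infty}(w):w\in H\setminus\{0\},\ (\I_{\lambda}^{\infty})'(w)=0\}$ is the ground-state level of $\I_{\lambda}^{\infty}$; by Remark \ref{remark4.2} it is attained at some positive $w_{\lambda}^{\infty}$. Applying Lemma \ref{lemma4.1} to $w_{\lambda}^{\infty}$ produces a path $\gamma_{\lambda}\in C([0,1],H)$ with $\gamma_{\lambda}(0)=0$, $0\notin\gamma_{\lambda}((0,1])$, $\I_{1/2}^{\infty}(\gamma_{\lambda}(1))<0$ and $\max_{t\in[0,1]}\I_{\lambda}^{\infty}(\gamma_{\lambda}(t))=\I_{\lambda}^{\infty}(w_{\lambda}^{\infty})=m_{\lambda}^{\infty}$. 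Since $V\leq V_{\infty}$ by $(V3)$ and $F\geq0$, for every $u\in H$ one has $\I_{\lambda}(u)=\I_{\lambda}^{\infty}(u)-\tfrac12\int_{\R^{N}}(V_{\infty}-V(x))u^{2}\,dx\leq\I_{\lambda}^{\infty}(u)\leq\I_{1/2}^{\infty}(u)$, the first inequality being \emph{strict} whenever $u\not\equiv0$ because $V\not\equiv V_{\infty}$ and each $\gamma_{\lambda}(t)$, $t\in(0,1]$, is a nontrivial dilation of the positive function $w_{\lambda}^{\infty}$. Hence $\I_{\lambda}(\gamma_{\lambda}(1))<0$, so $\gamma_{\lambda}$ is admissible in the mountain-pass scheme (which admits the usual free-endpoint characterization), and $c_{\lambda}\leq\max_{t\in[0,1]}\I_{\lambda}(\gamma_{\lambda}(t))<m_{\lambda}^{\infty}$.

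Now assume, for contradiction, $u_{\lambda}=0$, i.e.\ $u_{n}\rightharpoonup0$ in $H$, and run the concentration--compactness dichotomy on $(u_{n})$. If $\sup_{y\in\R^{N}}\int_{B_{1}(y)}|u_{n}|^{2}\,dx\to0$, then by Lemma \ref{lions lemma} and \eqref{2.14} the lower-order and the $q$-terms vanish, so $\I_{\lambda}(u_{n})\to c_{\lambda}$ and $\langle\I_{\lambda}'(u_{n}),u_{n}\rangle\to0$ give $\tfrac12\|u_{n}\|^{2}-\tfrac{\lambda}{2^{*}_{s}}|u_{n}|_{2^{*}_{s}}^{2^{*}_{s}}\to c_{\lambda}$ and $\|u_{n}\|^{2}-\lambda|u_{n}|_{2^{*}_{s}}^{2^{*}_{s}}\to0$; since $c_{\lambda}>0$ we get $\|u_{n}\|^{2}\to L>0$ and \eqref{FSI} forces $L\geq S_{*}^{N/2s}\lambda^{-(N-2s)/2s}$, whence $c_{\lambda}=\tfrac{s}{N}L\geq\tfrac{s}{N}S_{*}^{N/2s}\lambda^{-(N-2s)/2s}$, contradicting \eqref{2.3}. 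So non-vanishing holds: there is $(y_{n})\subset\R^{N}$ with $\int_{B_{1}(y_{n})}|u_{n}|^{2}\,dx\geq\delta/2>0$, and since $u_{n}\rightharpoonup0$ the sequence $(y_{n})$ is unbounded, hence $|y_{n}|\to\infty$ up to a subsequence. Then $v_{n}:=u_{n}(\cdot+y_{n})\rightharpoonup w_{\lambda}\neq0$ in $H$, and because $|y_{n}|\to\infty$ gives $V(\cdot+y_{n})\to V_{\infty}$ locally uniformly (by $(V3)$), the argument of Step $3$ of Lemma \ref{lemma2.6}---with $\I_{\lambda}^{\infty}$ replacing $\I_{\lambda}$---shows $(\I_{\lambda}^{\infty})'(w_{\lambda})=0$, so $\I_{\lambda}^{\infty}(w_{\lambda})\geq m_{\lambda}^{\infty}$. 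Finally I would feed $w_{\lambda}$ back into an iterated Brezis--Lieb decomposition patterned on Steps $2$--$5$ of Lemma \ref{lemma2.6}, but with the translated bumps $w_{\lambda}^{k}(\cdot-y_{n}^{k})$ solving the \emph{autonomous} equation: writing $\xi_{n}=u_{n}-\sum_{k=1}^{\ell}w_{\lambda}^{k}(\cdot-y_{n}^{k})$ one obtains $\|\xi_{n}\|^{2}=\lambda|\xi_{n}|_{2^{*}_{s}}^{2^{*}_{s}}+o(1)$ and $c_{\lambda}=\sum_{k=1}^{\ell}\I_{\lambda}^{\infty}(w_{\lambda}^{k})+\tfrac{s}{N}\lim_{n}\|\xi_{n}\|^{2}$, the sum being finite because each $\|w_{\lambda}^{k}\|$ is bounded below by a $\lambda$-independent constant (Remark \ref{remark2.5}) while $(u_{n})$ is bounded. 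In particular $c_{\lambda}\geq\I_{\lambda}^{\infty}(w_{\lambda}^{1})\geq m_{\lambda}^{\infty}$, contradicting the previous step. Therefore $u_{\lambda}\neq0$, and as observed $u_{\lambda}>0$.

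The main obstacle is this last decomposition, i.e.\ proving the non-autonomous analogue of Lemma \ref{lemma2.6}: the energy must split as the energy of $u_{\lambda}$ (here $0$), plus energies $\I_{\lambda}^{\infty}(w_{\lambda}^{k})$ of bumps escaping to infinity, plus a non-negative residual. One has to check that the quadratic form $\|\cdot\|^{2}$ splits up to $o(1)$---which uses $V(\cdot+y_{n}^{k})\to V_{\infty}$ locally uniformly together with $|y_{n}^{k}|\to\infty$ and $|y_{n}^{k}-y_{n}^{h}|\to\infty$ to annihilate the cross terms $\int_{\R^{N}}V(x)w_{\lambda}^{h}(x-y_{n}^{h})w_{\lambda}^{k}(x-y_{n}^{k})\,dx$ and $\int_{\R^{N}}V(x)u_{\lambda}w_{\lambda}^{k}(x-y_{n}^{k})\,dx$---that the nonlinear terms split via Lemma \ref{strauss} and Lemma \ref{willem} (as in \eqref{2.15}), and that the iteration stops after finitely many steps. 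I expect this global compactness lemma for $\I_{\lambda}$ with non-constant $V$ to be the technical heart, everything else being a rerun of arguments already available in Sections $2$ and $3$.
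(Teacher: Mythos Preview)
Your proposal is correct, and the overall architecture---prove $c_{\lambda}<m_{\lambda}^{\infty}$ via Lemma \ref{lemma4.1}, then derive a contradiction from $u_{\lambda}=0$ by producing a nontrivial critical point of $\I_{\lambda}^{\infty}$ with energy at most $c_{\lambda}$---is exactly the paper's. The difference is in how the inequality $c_{\lambda}\geq \I_{\lambda}^{\infty}(w_{\lambda})$ is obtained, and here the paper avoids entirely what you flag as ``the main obstacle.''

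The paper's shortcut is this: once $u_{n}\rightharpoonup 0$, the term $\tfrac12\int_{\R^{N}}(V(x)-V_{\infty})u_{n}^{2}\,dx$ and its derivative tend to zero (by $(V3)$, boundedness of $V$, and the fact that $u_{n}\to 0$ in $L^{2}_{\mathrm{loc}}$), so $(u_{n})$ is \emph{also} a bounded Palais--Smale sequence for the autonomous functional $\I_{\lambda}^{\infty}$ at the \emph{same} level $c_{\lambda}$. By translation invariance of $\I_{\lambda}^{\infty}$, so is $v_{n}=u_{n}(\cdot+y_{n})$. But now $\I_{\lambda}^{\infty}$ has constant potential $V_{\infty}$, so Lemma \ref{lemma2.6} applies verbatim to $(v_{n})$ and gives $c_{\lambda}=\I_{\lambda}^{\infty}(v_{\lambda})+\sum_{j}\I_{\lambda}^{\infty}(w_{\lambda}^{j})\geq \I_{\lambda}^{\infty}(v_{\lambda})\geq m_{\lambda}^{\infty}$, each summand being nonnegative by Pohozaev. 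This yields the contradiction in one line, with no need to redo the splitting in the non-autonomous setting, no cross-term estimates for $\int V(x)w_{\lambda}^{h}(x-y_{n}^{h})w_{\lambda}^{k}(x-y_{n}^{k})\,dx$, and no new global compactness lemma. Your iterated decomposition would also work, but it re-proves Lemma \ref{lemma2.6} in a harder setting when a two-line reduction lets you invoke it off the shelf.
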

\begin{proof}
From Lemma \ref{lemma2.2} and Remark \ref{remark2.3}, we may assume that for almost every $\lambda\in [\frac{1}{2}, 1]$, there exists $(u_{n})\subset H$ such that $u_{n}\geq 0$ in $H$, $u_{n}\rightharpoonup u_{\lambda}$ in $H$, $\I_{\lambda}(u_{n})\rightarrow c_{\lambda}\in (0, \frac{s}{N}\frac{S_{*}^{\frac{N}{2s}}}{\lambda^{\frac{N-2s}{2s}}})$ and $\I'_{\lambda}(u_{n})\rightarrow 0$. \\
Now, our claim is to prove that $u_{\lambda}\neq 0$. We argue by contradiction, and we suppose that $u_{\lambda}= 0$. As in the proof of Lemma \ref{lemma3.1}, we can find $(y_{n})\subset \R^{N}$ such that $|y_{n}|\rightarrow \infty$ and $v_{n}= u_{n}(\cdot + y_{n})\rightharpoonup v_{\lambda}\neq 0$ in $H$. 
Furthermore, by using the fact that $u_{n}\rightharpoonup 0$ in $H$, we can see that $\I_{\lambda}^{\infty}(u_{n})\rightarrow c_{\lambda}$ and $(\I_{\lambda}^{\infty})'(u_{n})\rightarrow 0$ hold. Thus, $\I_{\lambda}^{\infty}(v_{n}) \rightarrow c_{\lambda}$ and $(\I_{\lambda}^{\infty})'(v_{n})\rightarrow 0$. \\
Since $v_{n}\rightharpoonup v_{\lambda}\neq 0$ in $H$, it holds $(\I_{\lambda}^{\infty})'(v_{\lambda})= 0$.
In view of Lemma \ref{lemma2.6} we get $c_{\lambda}\geq \I_{\lambda}^{\infty}(v_{\lambda})$. From Remark \ref{remark4.2} it follows that $\I_{\lambda}^{\infty}$ has a ground state $w_{\lambda}$. 
Thus, $c_{\lambda}\geq \I_{\lambda}^{\infty}(w_{\lambda})$. By Lemma \ref{lemma4.1}, we can find a path $\gamma_{\lambda}\in C([0,1], H)$ such that $\gamma_{\lambda}(0)=0$, $\I_{\frac{1}{2}}^{\infty}(\gamma_{\lambda}(1))<0$, $w_{\lambda}\in \gamma_{\lambda}([0, 1])$, $0\notin \gamma_{\lambda}((0, 1])$ and $\max_{t\in [0, 1]} \I_{\lambda}^{\infty}(\gamma_{\lambda}(t))=\I_{\lambda}^{\infty}(w_{\lambda})$.\\ 
Therefore we obtain
\begin{equation}\label{zao1}
c_{\lambda}\geq \I_{\lambda}^{\infty}(w_{\lambda})= \max_{t\in [0, 1]} \I_{\lambda}^{\infty}(\gamma_{\lambda}(t)).
\end{equation} 
Taking into account $(V3)$, $V\not\equiv V_{\infty}$ and $0\notin \gamma_{\lambda}((0, 1])$, we can see that $\I_{\lambda}(\gamma_{\lambda}(t))< \I_{\lambda}^{\infty}(\gamma_{\lambda}(t))$ for all $t\in (0, 1]$. Now, we take $v_{1}=0$ and $v_{2}= \gamma_{\lambda}(1)$ in Theorem \ref{thm2.1}. Then, by using the definition of $c_{\lambda}$ and (\ref{zao1}) we get 
\begin{equation}\label{zao2}
c_{\lambda}\leq \max_{t\in [0, 1]} \I_{\lambda}(\gamma_{\lambda}(t)) < \max_{t\in [0,1]} \I_{\lambda}^{\infty}(\gamma_{\lambda}(t))\leq c_{\lambda}, 
\end{equation}
which gives a contradiction. As a consequence $u_{\lambda}\neq 0$, and by applying the maximum principle \cite{CabSir} we can deduce that $u_{\lambda}>0$. 

\end{proof}

\noindent
At this point we establish the following lemma which will be fundamental to prove Theorem \ref{thm2}.
\begin{lem}\label{lemma4.4}
Assume that $(V1)$-$(V3)$ and $(f1)$-$(f5)$ are satisfied. For $\lambda\in [\frac{1}{2}, 1]$, let $(u_{n})\subset H$ be a bounded sequence in $H$ such that $u_{n}\geq 0$ in $H$, $\I_{\lambda}(u_{n})\rightarrow c_{\lambda}\in \Bigl(0, \frac{s}{N}\frac{S_{*}^{\frac{N}{2s}}}{\lambda^{\frac{N-2s}{2s}}}\Bigr)$ and $\I'_{\lambda}(u_{n})\rightarrow 0$. \\
Then there exists a subsequence of $(u_{n})$, which we denote again by $(u_{n})$, such that
\begin{compactenum}[(i)]
\item $u_{n}\rightharpoonup u$ in $H$ and $\I'_{\lambda}(u_{\lambda})=0$,
\item $\I_{\lambda}(u_{\lambda})\leq c_{\lambda}$.
\end{compactenum}
\end{lem}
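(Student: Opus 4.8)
The plan is to follow the proof of the global compactness Lemma \ref{lemma2.6}, keeping only the two pieces of information needed here: the weak limit together with its equation, and the energy inequality $\I_{\lambda}(u_{\lambda})\le c_{\lambda}$. First I would use the boundedness of $(u_{n})$ and the reflexivity of $H$ to pass to a subsequence with $u_{n}\rightharpoonup u_{\lambda}$ in $H$; by Theorem \ref{Sembedding} we then also have $u_{n}\to u_{\lambda}$ in $L^{r}_{loc}(\R^{N})$ for every $r\in[1,2^{*}_{s})$ and, up to a further subsequence, a.e. in $\R^{N}$. Writing $f(t)=g(t)+(t^{+})^{\frac{N+2s}{N-2s}}$ as in Lemma \ref{lemma2.6}, one passes to the limit in $\langle\I'_{\lambda}(u_{n}),\varphi\rangle$ for $\varphi\in C^{\infty}_{0}(\R^{N})$: the subcritical term $\int_{\R^{N}}[g(u_{n})-g(u_{\lambda})]\varphi\,dx\to0$ by Lemma \ref{strauss} and $(f2)$--$(f3)$, while $\int_{\R^{N}}[(u_{n})^{2^{*}_{s}-1}-(u_{\lambda})^{2^{*}_{s}-1}]\varphi\,dx\to0$ because $\{(u_{n})^{2^{*}_{s}-1}\}$ is bounded in $L^{2^{*}_{s}/(2^{*}_{s}-1)}(\R^{N})$ and converges a.e.; combined with $\I'_{\lambda}(u_{n})\to0$ and the density of $C^{\infty}_{0}(\R^{N})$ in $H$, this yields $\I'_{\lambda}(u_{\lambda})=0$, which is $(i)$. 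As in \cite{A1, CW} (this is where the regularity furnished by $(f5)$ is used) $u_{\lambda}\in L^{\infty}(\R^{N})$, so Lemma \ref{willem} is at our disposal.

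To prove $(ii)$ set $v_{n}=u_{n}-u_{\lambda}\rightharpoonup0$ in $H$ and split into two cases according to Lemma \ref{lions lemma}. If $\lim_{n}\sup_{z\in\R^{N}}\int_{B_{1}(z)}|v_{n}|^{2}\,dx=0$, then $v_{n}\to0$ in $L^{t}(\R^{N})$ for all $t\in(2,2^{*}_{s})$, and arguing as in Step~$2$ of the proof of Lemma \ref{lemma2.6} --- Brezis--Lieb for $[\,\cdot\,]^{2}_{H^{s}(\R^{N})}$, $|\cdot|^{2}_{2}$ and $|\cdot|^{2^{*}_{s}}_{2^{*}_{s}}$ (Lemma \ref{Brezis-Lieb}), the splitting $\int F(u_{n})=\int F(u_{\lambda})+\int F(v_{n})+o(1)$ with $\int G(v_{n})\to0$ (from $(f3)$--$(f4)$ and $v_{n}\to0$ in $L^{q}$), and Lemma \ref{willem} tested against $\varphi=v_{n}$ --- one gets $\|v_{n}\|^{2}=\lambda\int_{\R^{N}}|v_{n}|^{2^{*}_{s}}\,dx+o(1)$ and hence
\[
c_{\lambda}=\I_{\lambda}(u_{\lambda})+\frac{1}{2}\|v_{n}\|^{2}-\frac{\lambda}{2^{*}_{s}}\int_{\R^{N}}|v_{n}|^{2^{*}_{s}}\,dx+o(1)=\I_{\lambda}(u_{\lambda})+\frac{s}{N}\,\lambda\int_{\R^{N}}|v_{n}|^{2^{*}_{s}}\,dx+o(1)\ \geq\ \I_{\lambda}(u_{\lambda}),
\]
which is $(ii)$ in this case.

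If, on the contrary, there is $(z_{n})\subset\R^{N}$ with $\int_{B_{1}(z_{n})}|v_{n}|^{2}\,dx\to d>0$, then $v_{n}\rightharpoonup0$ forces $|z_{n}|\to\infty$; setting $\tilde u_{n}=u_{n}(\cdot+z_{n})$ we may assume $\tilde u_{n}\rightharpoonup w^{1}\neq0$ in $H$, and, since by $(V3)$ one has $V(\cdot+z_{n})\to V_{\infty}$ uniformly on compact sets, passing to the limit in $\langle\I'_{\lambda}(u_{n}),\varphi(\cdot-z_{n})\rangle\to0$ shows that $w^{1}$ is a nontrivial critical point of $\I^{\infty}_{\lambda}$; in particular $w^{1}\geq0$, by the Pohozaev identity (Lemma \ref{lemma2.4} with $V\equiv V_{\infty}$) $\I^{\infty}_{\lambda}(w^{1})=\frac{s}{N}[w^{1}]^{2}_{H^{s}(\R^{N})}>0$, and, arguing as in Remark \ref{remark2.5} (which applies verbatim to $\I^{\infty}_{\lambda}$ since $V_{\infty}\geq V_{0}$), $\|w^{1}\|\geq\beta$ for some $\beta>0$ independent of $\lambda$. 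From here I would iterate the scheme of Step~$4$ of Lemma \ref{lemma2.6}, with the bubbles $w^{j}$ now being nontrivial critical points of $\I^{\infty}_{\lambda}$ and the energy expansion reading
\[
c_{\lambda}=\I_{\lambda}(u_{\lambda})+\sum_{j=1}^{k}\I^{\infty}_{\lambda}(w^{j})+\frac{1}{2}\|\xi_{n}\|^{2}-\frac{\lambda}{2^{*}_{s}}\int_{\R^{N}}|\xi_{n}|^{2^{*}_{s}}\,dx+o(1),\qquad \xi_{n}=u_{n}-u_{\lambda}-\sum_{j=1}^{k}w^{j}(\,\cdot-y_{n}^{j}\,),
\]
where $\I^{\infty}_{\lambda}$ (rather than $\I_{\lambda}$) appears on the bubbles because $\int_{\R^{N}}V(x)\,|w^{j}(x-y_{n}^{j})|^{2}\,dx\to V_{\infty}|w^{j}|^{2}_{2}$, while $\langle u_{\lambda},w^{j}(\cdot-y_{n}^{j})\rangle\to0$ and $\langle w^{h}(\cdot-y_{n}^{h}),w^{j}(\cdot-y_{n}^{j})\rangle\to0$ for $h\neq j$, thanks to $|y_{n}^{j}|\to\infty$ and $|y_{n}^{j}-y_{n}^{h}|\to\infty$. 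Since $(u_{n})$ is bounded and $\|w^{j}\|\geq\beta$ for every $j$, the process stops after finitely many steps (as in Step~$5$ of Lemma \ref{lemma2.6}) with a remainder satisfying $\sup_{z}\int_{B_{1}(z)}|\xi_{n}|^{2}\,dx\to0$, hence $\|\xi_{n}\|^{2}=\lambda\int_{\R^{N}}|\xi_{n}|^{2^{*}_{s}}\,dx+o(1)$; therefore
\[
c_{\lambda}=\I_{\lambda}(u_{\lambda})+\sum_{j=1}^{k}\I^{\infty}_{\lambda}(w^{j})+\frac{s}{N}\,\lim_{n\to\infty}\lambda\int_{\R^{N}}|\xi_{n}|^{2^{*}_{s}}\,dx\ \geq\ \I_{\lambda}(u_{\lambda}),
\]
since every term after $\I_{\lambda}(u_{\lambda})$ is nonnegative. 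This gives $(ii)$.

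The main obstacle is the bookkeeping in the last paragraph: one must run the Brezis--Lieb-type expansions simultaneously for the nonlocal seminorm, for the potential term $\int V(x)u^{2}$, and for the subcritical and critical parts of $\int F$, and verify that all cross terms generated by the translations $w^{j}(\cdot-y_{n}^{j})$ vanish; the only genuinely new ingredient compared with Lemma \ref{lemma2.6} is that a function concentrating along $|y_{n}^{j}|\to\infty$ sees the potential only through its limit $V_{\infty}$, which is exactly what turns $\I_{\lambda}$ into $\I^{\infty}_{\lambda}$ on the bubbles. Everything else is a routine adaptation of the arguments already carried out for Lemma \ref{lemma2.6}, using Lemmas \ref{willem}, \ref{strauss} and \ref{Brezis-Lieb} and the Pohozaev identity.
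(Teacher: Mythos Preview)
Your proposal is correct and follows the same global--compactness scheme as the paper, but the bookkeeping is organized differently. The paper does not rerun Step~4 of Lemma~\ref{lemma2.6} directly; instead it introduces the auxiliary functionals $H_{\lambda}$, $H_{\lambda}^{\infty}$, $J_{\lambda}^{\infty}$ and first proves the \emph{derivative--level} Brezis--Lieb splitting
\[
\Bigl|\int_{\R^{N}}\bigl(g(u_{n})-g(u_{\lambda})-g(w_{n}^{1})\bigr)\varphi\,dx\Bigr|=o(1)\|\varphi\|\qquad(\varphi\in H),
\]
which, together with Lemma~\ref{willem}, yields $(H_{\lambda}^{\infty})'(w_{n}^{1})=o(1)$. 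From there the remainder $w_{n}^{1}$ is itself a bounded Palais--Smale sequence for the \emph{autonomous} functional $H_{\lambda}^{\infty}=\I_{\lambda}^{\infty}$, and the iteration is carried out on the successive remainders $w_{n}^{j+1}=w_{n}^{j}(\cdot+y_{n}^{j})-w_{\lambda}^{j}$. Your route stays with the original sequence $u_{n}$ and uses $\I'_{\lambda}(u_{n})\to0$ at every stage, handling the potential by the observation $V(\cdot+y_{n}^{j})\to V_{\infty}$ that you describe. Both arrive at $c_{\lambda}-\I_{\lambda}(u_{\lambda})=\sum_{j}\I_{\lambda}^{\infty}(w^{j})+\frac{s}{N}\lambda\lim_{n}|\xi_{n}|_{2^{*}_{s}}^{2^{*}_{s}}\ge0$.

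One point to correct: the $L^{\infty}$ regularity of $u_{\lambda}$ (needed to invoke Lemma~\ref{willem}) follows already from $(f1)$--$(f3)$, exactly as in Step~2 of Lemma~\ref{lemma2.6}; this is \emph{not} where $(f5)$ enters. In the paper $(f5)$ is used precisely to obtain the mean--value estimate $|g(u_{n})-g(w_{n}^{1})|\le C\bigl(1+(|w_{n}^{1}|+|u_{\lambda}|)^{2^{*}_{s}-2}\bigr)|u_{\lambda}|$ behind the splitting displayed above. Your organization, by testing only against the specific functions $\xi_{n}$ and $\xi_{n}(\cdot+y_{n}^{k})$ and killing the subcritical part via $\xi_{n}\to0$ in $L^{q}$, effectively sidesteps that estimate; the paper's use of $(f5)$ buys the cleaner statement that the whole remainder is a Palais--Smale sequence for $H_{\lambda}^{\infty}$, which makes the iteration more transparent.
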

\begin{proof}
Since $(u_{n})$ is bounded in $H$, up to a subsequence, we may suppose that $u_{n}\rightharpoonup u_{\lambda}$ in $H$. 
Then, proceeding as in the proof of Step $1$ in Lemma \ref{lemma2.6}, and by using $(V3)$, we can see that $\I'_{\lambda}(u_{\lambda})=0$, that is $(i)$ is satisfied.\\
Set $w_{n}^{1}=u_{n}-u_{\lambda}$.
Similarly to the proof of Lemma \ref{lemma2.6}, we can deduce that
\begin{align}\label{4.2}
c_{\lambda}-\I_{\lambda}(u_{\lambda})=\frac{1}{2}\|w_{n}^{1}\|^{2}-\lambda \int_{\R^{N}} G(w_{n}^{1}) \, dx-\frac{\lambda}{2^{*}_{s}} |w_{n}^{1}|_{2^{*}_{s}}^{2^{*}_{s}}+o(1).
\end{align}
At this point, we aim to prove that for any $\varphi \in H$
\begin{equation}\label{4.3}
\left|\int_{\R^{N}} (g(u_{n})-g(u_{\lambda})-g(w_{n}^{1})) \varphi\, dx \right|= o(1) \|\varphi\|. 
\end{equation}
By using $(f5)$ and the mean value theorem, we can see that
\begin{equation*}
|g(u_{n}) - g(w_{n}^{1})|\leq C[1+(|w_{n}^{1}| + |u_{\lambda}| + |v_{n}^{1}|)^{2^{*}_{s}-2}]|u_{\lambda}|. 
\end{equation*}
Fix $R>0$ and by applying the H\"older's inequality we obtain
\begin{align}\begin{split}\label{4.4}
&\int_{\{|x|\geq R\}} |g(u_{n}) - g(w_{n}^{1})||\varphi|\, dx \\
&\leq C \left(\int_{\{|x|\geq R\}} |u_{\lambda}|^{2}\, dx \right)^{\frac{1}{2}} \|\varphi\| +C\left( \int_{\{|x|\geq R\}} |u_{\lambda}|^{2^{*}_{s}}\, dx \right)^{\frac{N+2s}{2N}}  \|\varphi\|\\
&+ C \left(\int_{\{|x|\geq R\}} |w_{n}^{1}|^{2^{*}_{s}}\, dx \right)^{\frac{2s}{N}} \left( \int_{\{|x|\geq R\}} |u_{\lambda}|^{2^{*}_{s}}\, dx \right)^{\frac{1}{2^{*}_{s}}} \|\varphi\| 
\end{split}\end{align}
and
\begin{align}\label{4.5}
\int_{\{|x|\geq R\}} |g(u_{\lambda}) \varphi|\, dx &\leq C\int_{|x|\geq R} |u_{\lambda}||\varphi| \, dx+C \int_{|x|\geq R} |u_{\lambda}|^{2^{*}_{s}-1}|\varphi| \, dx  \nonumber\\
&\leq C \left(\int_{\{|x|\geq R\}} |u_{\lambda}|^{2}\, dx\right)^{\frac{1}{2}} \|\varphi\| + C \left(\int_{\{|x|\geq R\}} |u_{\lambda}|^{2^{*}}\, dx\right)^{\frac{2^{*}_{s}-1}{2^{*}_{s}}} \|\varphi\| .  
\end{align}
Putting together (\ref{4.4}) and (\ref{4.5}), we deduce that for any $\varepsilon>0$ there exists $R>0$ such that
\begin{equation}\label{4.6}
\left|\int_{\{|x|\geq R\}} (g(u_{n}) - g(u_{\lambda})- g(w_{n}^{1})) \varphi\, dx\right| \leq \varepsilon \|\varphi\|. 
\end{equation}
Let us note that
\begin{align*}
\int_{|x|\leq R} |g(u_{n}) - g(u_{\lambda})| |\varphi| \, dx\leq \left(\int_{|x|\leq R} |g(u_{n}) - g(u_{\lambda})|^{\frac{2^{*}_{s}}{2^{*}_{s}-1}} \, dx \right)^{\frac{2^{*}_{s}-1}{2^{*}_{s}}} \left(\int_{|x|\leq R} |\varphi|^{2^{*}_{s}} \, dx \right)^{\frac{1}{2^{*}_{s}}}.
\end{align*}
Recalling that $\lim_{t\rightarrow \infty}\frac{g(t)^{\frac{2^{*}_{s}}{2^{*}_{s}-1}}}{|t|^{2^{*}_{s}}}=0$ and $(u_{n})$ is bounded in $L^{2^{*}_{s}}(\R^{N})$, we can apply Lemma \ref{strauss} to infer that
\begin{equation*}
\lim_{n\rightarrow \infty} \int_{|x|\leq R} |g(u_{n})|^{\frac{2^{*}_{s}}{2^{*}_{s}-1}} \, dx=\int_{|x|\leq R} |g(u_{\lambda})|^{\frac{2^{*}_{s}}{2^{*}_{s}-1}} \, dx.
\end{equation*}
By using the Dominated Convergence Theorem, we have
\begin{equation}\label{4.7}
\int_{\{|x|\leq R\}} |g(u_{n}) - g(u_{\lambda})| |\varphi|\, dx =o(1)\|\varphi\|
\end{equation}
and  
\begin{equation}\label{4.8}
\int_{\{|x|\leq R\}} |g(w_{n}^{1})| |\varphi| \, dx =o(1)\|\varphi\|. 
\end{equation}
Hence, (\ref{4.6})-(\ref{4.8}) show that (\ref{4.3}) is verified. Now, for $\lambda \in [\frac{1}{2}, 1]$, let us introduce the following functionals on $H$
\begin{align*}
&H_{\lambda}(u)= \frac{1}{2} \|u\|^{2} - \lambda \int_{\R^{N}} G(u) \, dx - \frac{\lambda}{2^{*}_{s}} \int_{\R^{N}} |u|^{2^{*}_{s}} \, dx, \\
&H_{\lambda}^{\infty}(u)= \frac{1}{2} \int_{\R^{N}} \left(|(-\Delta)^{\frac{s}{2}} u|^{2} + V_{\infty}u^{2}\right)\, dx - \lambda \int_{\R^{N}} G(u) \, dx - \frac{\lambda}{2^{*}_{s}} \int_{\R^{N}} |u|^{2^{*}_{s}} \, dx,  \\
&J^{\infty}_{\lambda}(u)= \frac{1}{2} \int_{\R^{N}} \left(|(-\Delta)^{\frac{s}{2}} u|^{2} + V_{\infty}u^{2}\right)\, dx- \frac{\lambda}{2^{*}_{s}} \int_{\R^{N}} |u|^{2^{*}_{s}} \, dx. 
\end{align*}
Then, (\ref{4.2}) becomes 
\begin{equation}\label{4.9}
c_{\lambda}- \I_{\lambda}(u_{\lambda})= H_{\lambda}(w_{n}^{1}) + o(1). 
\end{equation}
By using (\ref{2.15}) and (\ref{4.3}) we have, for any $\varphi \in H$, 
\begin{equation}\label{4.10}
|\langle \I_{\lambda}'(u_{n}) - \I_{\lambda}'(u_{\lambda}), \varphi	 \rangle - \langle H_{\lambda}'(w_{n}^{1}), \varphi \rangle |= o(1) \|\varphi\|, 
\end{equation}
which gives 
\begin{equation}\label{4.11}
H_{\lambda}'(w_{n}^{1})=o(1). 
\end{equation}
Taking into account (\ref{4.9}), (\ref{4.11}), $(V3)$ and the fact that $w_{n}^{1}\rightharpoonup 0$ in $H$, we get
\begin{equation}\label{4.12}
c_{\lambda}- \I_{\lambda}(u_{\lambda})= H_{\lambda}^{\infty}(w_{n}^{1})+ o(1)
\end{equation}
and 
\begin{equation}\label{4.13}
(H_{\lambda}^{\infty})'(w_{n}^{1})=o(1). 
\end{equation}
Now we distinguish two cases. \\
$(1)$ $\lim_{n\rightarrow \infty} \sup_{y\in \R^{N}} \int_{B_{1}(y)} |w_{n}^{1}|^{2}\, dx =0$. \\
By using Lemma \ref{lions lemma} we have
\begin{equation}\label{4.14}
w_{n}^{1}\rightarrow 0 \, \mbox{ in } \, L^{t}(\R^{N}), \quad \forall t\in (2, 2^{*}_{s}). 
\end{equation}
Putting together (\ref{2.14}) and (\ref{4.12})-(\ref{4.14}), we can deduce that
\begin{equation*}
c_{\lambda}- \I_{\lambda}(u_{\lambda})= J^{\infty}_{\lambda}(w_{n}^{1})+ o(1) \, \mbox{ and } \, J'_{\lambda}(w_{n}^{1})=o(1) 
\end{equation*}
which gives
\begin{equation*}
c_{\lambda}- \I_{\lambda}(u_{\lambda})= \frac{\lambda s}{N} |w_{n}^{1}|_{2^{*}_{s}}^{2^{*}_{s}} +o(1)
\end{equation*}
and then $c_{\lambda}\geq \I_{\lambda}(u_{\lambda})$. \\
$(2)$ $\lim_{n\rightarrow \infty} \sup_{y\in \R^{N}} \int_{B_{1}(y)} |w_{n}^{1}|^{2}\, dx \geq \delta_{1}$ for some $\delta_{1}>0$.\\
Thus, there exists $y_{n}^{1}\in \R^{N}$, $|y_{n}^{1}|\rightarrow \infty$ such that $\int_{B_{1}(y_{n}^{1})} |w_{n}^{1}|^{2}\, dx\geq \frac{\delta_1}{2}$. As a consequence, we can see that $w_{n}^{1}(\cdot + y_{n}^{1})\rightharpoonup w_{\lambda}^{1}\neq 0$ in $H$,  
\begin{equation}\label{4.15}
c_{\lambda}- \I_{\lambda}(u_{\lambda})= H_{\lambda}^{\infty}(w_{n}^{1}(\cdot+ y_{n}^{1}))+ o(1)
\end{equation}
and 
\begin{equation}\label{4.16}
(H_{\lambda}^{\infty})'(w_{n}^{1}(\cdot+ y_{n}^{1}))=o(1). 
\end{equation}  
By (\ref{4.16}) we have $(H_{\lambda}^{\infty})'(w_{n}^{1})=0$. Now, if $c_{\lambda}- \I_{\lambda}(u_{\lambda})< \frac{s}{N} \frac{S_{*}^{\frac{N}{2s}}}{\lambda^{\frac{N-2s}{2s}}}$, then we can proceed as in the proof of Lemma \ref{lemma2.6} to obtain the thesis. \\
Otherwise, we set $w_{n}^{2}= w_{n}^{1}(\cdot+y_{n}^{1})- w_{\lambda}^{1}$, and repeating the same arguments of (\ref{4.9}) and (\ref{4.11}), we can see 
\begin{equation}\label{4.17}
c_{\lambda}- \I_{\lambda}(u_{\lambda})- H_{\lambda}^{\infty}(w_{\lambda}^{1}) + o(1) = H_{\lambda}^{\infty}(w_{n}^{2})
\end{equation}
and 
\begin{equation}\label{4.18}
(H_{\lambda}^{\infty})'(w_{n}^{2})=o(1). 
\end{equation}  
Then, as before, the following cases can occur
\begin{equation}\label{4.19}
\lim_{n\rightarrow \infty} \sup_{y\in \R^{N}} \int_{B_{1}(y)} |w_{n}^{2}|^{2}\, dx =0
\end{equation}
or 
\begin{equation}\label{4.20}
\lim_{n\rightarrow \infty} \sup_{y\in \R^{N}} \int_{B_{1}(y)} |w_{n}^{2}|^{2}\, dx \geq \delta_{2}>0. 
\end{equation}
Let us suppose that (\ref{4.19}) is true. Then, by the case $(1)$ we deduce that $c_{\lambda}- \I_{\lambda}(u_{\lambda})- H_{\lambda}^{\infty}(w_{\lambda}^{1})\geq 0$, and by Lemma \ref{lemma2.4} we get $H_{\lambda}^{\infty}(w_{\lambda}^{1})\geq 0$. This two facts give $c_{\lambda}- \I_{\lambda}(u_{\lambda})\geq 0$. \\
Now, we can suppose that (\ref{4.20}) holds. Repeating this procedure, we can find $w_{n}^{i} \in H$, $y_{n}^{i}\in \R^{N}$, $|y_{n}^{i}|\rightarrow \infty$, $i\in \N$ such that $w_{n}^{i}(\cdot + y_{n}^{i})\rightharpoonup w_{\lambda}^{i}\neq 0$ in $H$, $(H_{\lambda}^{\infty})'(w_{\lambda}^{i})=0$, 
\begin{equation}\label{4.21}
c_{\lambda}- \I_{\lambda}(u_{\lambda}) - \sum_{i=1}^{j} H_{\lambda}^{\infty}(w_{\lambda}^{i})+o(1)= H_{\lambda}^{\infty}(w_{n}^{j+1})
\end{equation}
and 
\begin{equation}\label{4.22}
(H_{\lambda}^{\infty})'(w_{n}^{j+1})=o(1), 
\end{equation}
where 
\begin{equation*}
w_{n}^{j+1}= w_{n}^{j}(\cdot + y_{n}^{j})- w_{\lambda}^{j}, \quad j\in \N. 
\end{equation*}
Since $(H_{\lambda}^{\infty})'(w_{\lambda}^{i})=0$, we can use Lemma \ref{lemma2.4} to  get
\begin{equation}\label{4.23}
H_{\lambda}^{\infty}(w_{\lambda}^{i})= \frac{s}{N} [w_{\lambda}^{i}]_{H^{s}(\R^{N})}^{2}. 
\end{equation}
Now we show that there exists $\alpha>0$ independent of $i$ such that
\begin{equation}\label{4.24}
[w_{\lambda}^{i}]_{H^{s}(\R^{N})}\geq \alpha. 
\end{equation}
In fact, by using $(H_{\lambda}^{\infty})'(w_{\lambda}^{i})=0$, $\lambda \in [\frac{1}{2}, 1]$, and $(f1)$-$(f3)$, we can see that for any $\varepsilon>0$ there exists $C_{\varepsilon}>0$ such that 
\begin{align*}
\int_{\R^{N}} \left(|(-\Delta)^{\frac{s}{2}}w^{i}_{\lambda}|^{2} + V_{\infty}|w_{\lambda}^{i}|^{2}\right)\, dx &\leq \varepsilon \int_{\R^{N}} |w_{\lambda}^{i}|^{2} \, dx + C_{\varepsilon} \int_{\R^{N}} |w_{\lambda}^{i}|^{2^{*}_{s}}\, dx\\
& \leq \frac{\varepsilon}{V_{\infty}} \int_{\R^{N}} V_{\infty} |w_{\lambda}^{i}|^{2}\, dx +C_{\varepsilon} \int_{\R^{N}} |w_{\lambda}^{i}|^{2^{*}_{s}}\, dx.
\end{align*} 
Choosing $\varepsilon\in (0,V_{\infty})$, we can infer that 
\begin{equation*}
[w_{\lambda}^{i}]_{H^{s}(\R^{N})}^{2} \leq C |w_{\lambda}^{i}|_{2^{*}_{s}}^{2^{*}_{s}}, 
\end{equation*}
which together with the Sobolev inequality, gives (\ref{4.24}).\\
Then, putting together (\ref{4.23}) and (\ref{4.24}), at some $j=k$, we obtain that
\begin{equation*}
c_{\lambda} - \I_{\lambda}(u_{\lambda})- \sum_{i=1}^{j} H_{\lambda}^{\infty}(w_{\lambda}^{i})< \frac{s}{N}\frac{S_{*}^{\frac{N}{2s}}}{\lambda^{\frac{N-2s}{2s}}}.   
\end{equation*}
The conclusion follows by Lemma \ref{lemma2.6}.

\end{proof}

\noindent
We end this section giving the proof of Theorem \ref{thm2}.

\begin{proof}[Proof of Theorem \ref{thm2}]
Taking into account Lemma \ref{lemma4.3}, for almost every $\lambda\in [\frac{1}{2}, 1]$ there exists $(u_{n})\subset H$ such that $\I_{\lambda}(u_{n})\rightarrow c_{\lambda}\in \left(0, \frac{s}{N} \frac{S_{*}^{\frac{N}{2s}}}{\lambda^{\frac{N-2s}{2s}}}\right)$, $\I'_{\lambda}(u_{n})\rightarrow 0$, $u_{n}\rightharpoonup u_{\lambda}\neq 0$ in $H$. \\
By using Lemma \ref{lemma4.4}, we deduce that $\I_{\lambda}(u_{\lambda})\leq c_{\lambda}$ and $\I'_{\lambda}(u_{\lambda})=0$. Hence, we can find a sequence $\lambda_{n}\in [\frac{1}{2}, 1]$ such that $\lambda_{n}\rightarrow 1$, $c_{\lambda_{n}}\in (0, \frac{s}{N} \frac{S_{*}^{\frac{N}{2s}}}{\lambda^{\frac{N-2s}{2s}}})$, $u_{\lambda_{n}}\in H$ such that $\I'_{\lambda_{n}}(u_{\lambda_{n}})=0$, $\I_{\lambda_{n}}(u_{\lambda_{n}})\leq c_{\lambda_{n}}$.
At this point, we show that there exists a positive constant $C$ such that
\begin{equation}\label{bound}
\|u_{\lambda_{n}}\|\leq C \mbox{ for all } n\in \N.
\end{equation}
By using $(V4)$, we know that there exists $\theta\in (0, 2s)$ such that 
\begin{equation}\label{squas}
|\max\{x\cdot \nabla V, 0\}|_{\frac{N}{2s}}\leq \theta S_{*}.
\end{equation} 
Taking into account $\I_{\lambda_{n}}(u_{\lambda_{n}})\leq c_{\frac{1}{2}}$, $\I'_{\lambda_{n}}(u_{\lambda_{n}})=0$, Lemma \ref{lemma2.4}, H\"older inequality, Theorem \ref{Sembedding} and (\ref{squas}), we can infer that
\begin{align}\label{bound1}
s[u_{\lambda_{n}}]_{H^{s}(\R^{N})}^{2}&=\frac{N}{2}[u_{\lambda_{n}}]_{H^{s}(\R^{N})}^{2}+\frac{N}{2} \int_{\R^{N}} V(x) u_{\lambda_{n}}^{2}\, dx+\frac{1}{2} \int_{\R^{N}} x\cdot \nabla V(x) u_{\lambda_{n}}^{2} \, dx-\lambda_{n} N \int_{\R^{N}} F(u_{\lambda_{n}}) \, dx \nonumber\\
&=N \I_{\lambda_{n}}(u_{\lambda_{n}})+\frac{1}{2} \int_{\R^{N}} x\cdot \nabla V(x) u_{\lambda_{n}}^{2} \, dx \leq N c_{\frac{1}{2}}+\frac{\theta}{2}  [u_{\lambda_{n}}]^{2},
\end{align}
which implies that $[u_{\lambda_{n}}]_{H^{s}(\R^{N})}\leq C$ for any $n\in \N$.
Putting together $\I'_{\lambda_{n}}(u_{\lambda_{n}})=0$, $\lambda_{n}\in [\frac{1}{2}, 1]$, $(f1)$-$(f3)$ and the Sobolev inequality, we have for any $\varepsilon>0$  
\begin{align}\label{bound2}
V_{0} |u_{\lambda_{n}}|_{2}^{2}&\leq \int_{\R^{N}} \left(|(-\Delta)^{\frac{s}{2}} u_{\lambda_{n}}|^{2}+V(x)u^{2}_{\lambda_{n}}\right) \, dx \nonumber\\
&\leq \int_{\R^{N}} f(u_{\lambda_{n}})u_{\lambda_{n}} \, dx \nonumber\\
&\leq \varepsilon |u_{\lambda_{n}}|_{2}^{2}+C_{\varepsilon} |u_{\lambda_{n}}|_{2^{*}_{s}}^{2^{*}_{s}} \nonumber\\
&\leq \varepsilon |u_{\lambda_{n}}|_{2}^{2}+C'_{\varepsilon} [u_{\lambda_{n}}]_{H^{s}(\R^{N})}^{2^{*}_{s}}.
\end{align}
Choosing $\varepsilon \in (0, V_{0})$ and by using $[u_{\lambda_{n}}]_{H^{s}(\R^{N})}\leq C$, we can see that (\ref{bound2}) yields $|u_{\lambda_{n}}|_{2}\leq C$ for all $n\in \N$. In view of $(V2)$ and $(V3)$, we deduce that $0\leq \int_{\R^{N}} V(x) u^{2}_{\lambda_{n}} \, dx\leq V_{\infty} |u_{\lambda_{n}}|_{2}^{2}\leq V_{\infty} C^2$, which completes the proof of (\ref{bound}).\\
Now, we can note that 
$$
\I(u_{\lambda_{n}})=\I_{\lambda_{n}}(u_{\lambda_{n}})+(\lambda_{n}-1) \int_{\R^{N}} F(u_{\lambda_{n}}) \, dx,
$$
so we can infer that 
$$
\lim_{n\rightarrow \infty} \I(u_{\lambda_{n}})\leq c_{1}<\frac{s}{N} S_{*}^{\frac{N}{2s}}
$$
and
$$
\lim_{n\rightarrow \infty} \I'(u_{\lambda_{n}})=0.
$$
In view of Remark \ref{remark2.5}, we know that there exists $\beta>0$ independent of $\lambda_{n}$ such that $\|u_{\lambda_{n}}\|\geq \beta$. Since $\|u_{\lambda_{n}}\|\leq C$ for any $n\in \N$, we can proceed as in the proof of Lemma \ref{lemma4.3} to show that $u_{\lambda_{n}}\rightharpoonup u_{0}\neq 0$ in $H$. Then, by using Lemma \ref{lemma4.4}, we can see that 
$$
\I(u_{0})\leq \lim_{n\rightarrow \infty} \I(u_{\lambda_{n}})\leq c_{1}<\frac{s}{N} S_{*}^{\frac{N}{2s}}
\mbox{ and } \I'(u_{0})=0.
$$ 
Let us define
$$
m=\inf\{\I(u): u\in H, u\neq 0, \I'(u)=0\}.
$$
Being $\I'(u_{0})=0$, it is clear that $m\leq \I(u_{0})<\frac{s}{N} S_{*}^{\frac{N}{2s}}$. Now, by using the definition of $m$, we can find $(v_{n})\subset H$ such that $v_{n}\neq 0$, $\I(v_{n})\rightarrow m$ and $\I'(v_{n})=0$. Arguing as in (\ref{bound1}) and (\ref{bound2}), we can show that $(v_{n})$ is bounded in $H$, and that there exists $\beta>0$ independent of $n$ such that $\|v_{n}\|\geq \beta$. This means that $m>-\infty$. Proceeding similarly to the proof of Lemma \ref{lemma4.3}, we can see that $v_{n}\rightharpoonup v_{0}\neq 0$ in $H$.  Then, by using Lemma \ref{lemma4.4}, we can deduce that $\I'(v_{0})=0$ and $\I(v_{0})\leq m$. Since $\I'(v_{0})=0$, we also have that $\I(v_{0})\geq m$.
Therefore, we have proved that $v_{0}\neq 0$ is such that  $\I(v_{0})= m$ and $\I'(v_{0})=0$, that is $v_{0}$ is a ground state of (\ref{P}).
\end{proof}

\noindent {\bf Acknowledgements.}
The authors would like to express their sincere gratitude to the referee for careful reading the manuscript and valuable comments and suggestions.
The paper has been carried out under the auspices of the INdAM - GNAMPA Project 2017 titled: {\it Teoria e modelli per problemi non locali}.


\begin{thebibliography}{777}


\bibitem{ADOM}
C. O. Alves, J. M. do \'O, O. H. Miyagaki, 
{\it Concentration Phenomena for Fractional Elliptic Equations Involving Exponential Critical Growth},
Adv. Nonlinear Stud. {\bf 16} (2016), no. 4, 843--861.

\bibitem{AFS}
C. O. Alves, G. M. Figueiredo, G. Siciliano, 
{\it Ground state solutions for fractional scalar field equations under a general critical nonlinearity}, arXiv:1610.04649. 

\bibitem{AS}
C. O. Alves and M. A. S. Souto, 
{\it Existence of solutions for a class of nonlinear Schr\"odinger equations with potential vanishing at infinity},
J. Differential Equations {\bf 254} (2013), no. 4, 1977--1991.





\bibitem{AM2}
A. Ambrosetti and A. Malchiodi, 
{\it Perturbation methods and semilinear elliptic problems on $\R^{n}$},
Progress in Mathematics, 240. Birkh\"auser Verlag, Basel, 2006. xii+183 pp.

\bibitem{A3}
V. Ambrosio, 
{\it Multiple solutions for a fractional $p$-Laplacian equation with sign-changing potential}, 
Electron. J. Differential Equations 2016, Paper No. 151, 12 pp.

\bibitem{A1}
V. Ambrosio,
{\it Ground states for a fractional scalar field problem with critical growth},
Differential Integral Equations {\bf 30} (2017), no. 1-2, 115--132.

\bibitem{A4}
V. Ambrosio, 
{\it Multiplicity of positive solutions for a class of fractional Schr\"odinger equations via penalization method}, 
to appear in Annali di Matematica Pura e Applicata DOI 10.1007/s10231-017-0652-5.

\bibitem{A2}
V. Ambrosio,
{\it Mountain pass solutions for the fractional Berestycki-Lions problem},
accepted for publication in Adv. Differential Equations.


\bibitem{BCPS}
B. Barrios, E. Colorado, A. de Pablo and U. S\'anchez,
{\it On some critical problems for the fractional Laplacian operator},
J. Differential  Equation {\bf 252} (2012), 6133--6162.

\bibitem{BCSS}
B. Barrios, E. Colorado, R. Servadei, and F. Soria, 
{\it A critical fractional equation with concave-convex power nonlinearities},
Ann. Inst. H. Poincar\'e Anal. Non Lin\'eaire {\bf 32} (2015), no. 4, 875--900.

\bibitem{BL1}
H. Berestycki and P.L. Lions, 
{\it Nonlinear scalar field equations. I. Existence of a ground state}, 
Arch. Rational Mech. Anal. {\bf 82} (1983), no. 4, 313--345.


\bibitem{BL}
H. Brezis and E. H. Lieb,
{\it A relation between pointwise convergence of functions and convergence of functionals},
Proc. Amer. Math. Soc. {\bf 88} (1983), no. 3, 486--490.

\bibitem{CabSir}
Cabr\'e and Sire,
{\it Nonlinear equations for fractional Laplacians, I: Regularity, maximum principles, and Hamiltonian estimates},
Ann. Inst. H. Poincar\'e Anal. Non Lin\'eaire {\bf 31} (2014), no. 1, 23--53.

\bibitem{CaT}
X. Cabr\'e and J. Tan, 
{\it Positive solutions of nonlinear problems involving the square root of the Laplacian},
Adv. Math. {\bf 224} (2010), no. 5, 2052--2093.

\bibitem{CS}
L.A. Caffarelli and L.Silvestre,
{\it An extension problem related to the fractional Laplacian},
Comm. Partial Differential Equations {\bf 32} (2007),1245--1260.

\bibitem{CW}
X. J. Chang and Z.Q. Wang, 
{\it Ground state of scalar field equations involving fractional Laplacian with general nonlinearity}, 
Nonlinearity {\bf 26}, 479--494 (2013).

\bibitem{CT}
A. Cotsiolis and N. K. Tavoularis,
{\it Best constants for Sobolev inequalities for higher order fractional derivatives},
J. Math. Anal. Appl. {\bf 295} (2004), no. 1, 225--236.

\bibitem{DDDV}
J. D\'avila, Juan, M. del Pino, S. Dipierro, and E. Valdinoci,
{\it Concentration phenomena for the nonlocal Schr\"odinger equation with Dirichlet datum},
 Anal. PDE {\bf 8} (2015), no. 5, 1165--1235.

\bibitem{DF}
M. Del Pino and P. L. Felmer, 
{\it Local Mountain Pass for semilinear elliptic problems in unbounded domains},
Calc. Var. Partial Differential Equations, {\bf 4} (1996), 121--137.

\bibitem{DPV}
E. Di Nezza, G. Palatucci and E. Valdinoci, 
{\it Hitchhiker's guide to the fractional Sobolev spaces}, 
Bull. Sci. math. {\bf 136} (2012), 521--573.

\bibitem{DPPV}
S. Dipierro, G. Palatucci, and E. Valdinoci, 
{\it Existence and symmetry results for a Schr\"odinger type problem involving the fractional Laplacian}, 
Le Matematiche (Catania) {\bf 68}, 201--216 (2013).

\bibitem{DoMS}
J. M. do \'O, O.H. Miyagaki, and M. Squassina,
{\it Critical and subcritical fractional problems with vanishing potentials}, 
Commun. Contemp. Math. {\bf 18} (2016), no. 6, 1550063, 20 pp.

\bibitem{FQT}
P. Felmer, A. Quaas and J. Tan,
{\it Positive solutions of the nonlinear {S}chr{\"o}dinger equation with the fractional {L}aplacian},
Proc. Roy. Soc. Edinburgh Sect. A {\bf 142} (2012), 1237--1262.

\bibitem{FS}
G. M. Figueiredo and G. Siciliano,
{\it A multiplicity result via Ljusternick-Schnirelmann category and Morse theory for a fractional Schr\"odinger equation in $\R^{N}$},
NoDEA Nonlinear Differential Equations Appl. {\bf 23} (2016), no. 2, Art. 12, 22 pp.

\bibitem{FMS}
A. Fiscella, G. Molica Bisci and R. Servadei,
{\it Bifurcation and multiplicity results for critical nonlocal fractional Laplacian problems}, 
Bull. Sci. Math. {\bf 140} (2016), no. 1, 14--35.

\bibitem{FV}
A. Fiscella and E. Valdinoci,
{\it A critical Kirchhoff type problem involving a nonlocal operator},
Nonlinear Anal. {\bf 94} (2014), 156--170.

\bibitem{FW}
A. Floer and A. Weinstein, 
{\it Nonspreading wave packets for the cubic Schrdinger equation with a bounded potential},
J. Funct. Anal. {\bf 69} (1986), 397--408.

\bibitem{HZ}
X. He and W. Zou,
{\it Existence and concentration result for the fractional Schr\"odinger equations with critical nonlinearities},
Calc. Var. Partial Differential Equations {\bf 55} (2016), no. 4, Paper No. 91, 39 pp. 

\bibitem{J}
J. Jeanjean, 
{\it On the existence of bounded Palais-Smale sequences and application to a Landesman-Lazer-type problem set on $\R^N$},
Proc. Roy. Soc. Edinburgh Sect. A {\bf 129} (1999), no. 4, 787--809.

\bibitem{JT}
L. Jeanjean and K. Tanaka,
{\it A positive solution for a nonlinear Schr\"odinger equation on $\R^{N}$},
Indiana Univ. Math. J. {\bf 54} (2005), no. 2, 443--464. 


\bibitem{Laskin1}
N. Laskin,
{\it Fractional quantum mechanics and L\`evy path integrals}, 
Phys. Lett. A {\bf 268} (2000), 298--305.

\bibitem{Laskin2}
N. Laskin, 
{\it Fractional Schr\"odinger equation}, 
Phys. Rev. E {\bf 66} (2002), 056108.

\bibitem{LG}
Z.S. Liu and S.J. Guo, 
{\it On ground state solutions for the Schr\"odinger-Poisson equations with critical growth},
J. Math. Anal. Appl. {\bf 412} (2014) 435--448.

\bibitem{MBR}
G. Molica Bisci and V. R\u{a}dulescu,
{\it Ground state solutions of scalar field fractional Schr\"odinger equations},
Calc. Var. Partial Differential Equations {\bf 54} (2015), no. 3, 2985--3008.

\bibitem{MBRS}
G. Molica Bisci, V. R\u{a}dulescu and R. Servadei, 
{\it Variational methods for nonlocal fractional problems}, 
with a foreword by Jean Mawhin. Encyclopedia of Mathematics and its Applications, 162. Cambridge University Press, Cambridge, 2016. xvi+383 pp.

\bibitem{MS}
G. Molica Bisci and R. Servadei, 
{\it Lower semicontinuity of functionals of fractional type and applications to nonlocal equations with critical Sobolev exponent}, 
Adv. Differential Equations {\bf 20} (2015), no. 7-8, 635--660.

\bibitem{Rab}
P. Rabinowitz, 
{\it On a class of nonlinear Schr\"odinger equations} 
Z. Angew. Math. Phys. {\bf 43} (1992), no. 2, 270Ð291.

\bibitem{Secchi1}
S. Secchi, 
{\it Ground state solutions for nonlinear fractional Schr\"odinger equations in $\R^{N}$}, 
J. Math. Phys. {\bf 54} (2013), 031501.

\bibitem{Secchi2}
S. Secchi, 
{\it On fractional Schr\"odinger equations in $\R^{N}$ without the Ambrosetti-Rabinowitz condition},
Topol. Methods Nonlinear Anal. {\bf 47} (2016), no. 1, 19--41.

\bibitem{SV}
R. Servadei and E. Valdinoci,
{\it The Brezis-Nirenberg result for the fractional Laplacian}, 
Trans. Amer. Math. Soc. {\bf 367} (2015), no. 1, 67--102.


\bibitem{SZ}
X. Shang and J. Zhang,
{\it Ground states for fractional Schr\"odinger equations with critical growth},
Nonlinearity {\bf 27} (2014), no. 2, 187--207.


\bibitem{tan}
J. Tan,
{\it The Brezis-Nirenberg type problem involving the square root of the Laplacian},
 Calc. Var. Partial Differential Equations {\bf 42} (2011), no. 1-2, 21--41.

\bibitem{TH}
K. Teng and X. He,
{\it Ground state solutions for fractional Schrdinger equations with critical Sobolev exponent}
Commun. Pure Appl. Anal. {\bf 15} (2016), no. 3, 991--1008.

\bibitem{Wang}
X. Wang, 
{\it On concentration of positive bound states of nonlinear Schr\"odinger equations},
 Comm. Math. Phys. {\bf 53} (1993), 229--244.

\bibitem{W}
M. Willem,
{\it Minimax theorems}, 
Progress in Nonlinear Differential Equations and their Applications, 24. Birkh\"auser Boston, Inc., Boston, MA, 1996. x+162 pp.



\bibitem{ZZR}
X. Zhang, B. Zhang and D. Repov\v{s},
{\it Existence and symmetry of solutions for critical fractional Schr\"odinger equations with bounded potentials},
Nonlinear Anal. 142 (2016), 48--68.


\bibitem{ZZC}
J. Zhang and W. Zou,
{\it A Berestycki-Lions theorem revisited},
Commun. Contemp. Math. {\bf 14} (2012), no. 5, 1250033, 14 pp.


\bibitem{ZZ}
J. Zhang and W. Zou, 
{\it The critical case for a Berestycki-Lions theorem},
 Sci. China Math. {\bf 57} (2014), no. 3, 541--554.



\end{thebibliography}
\end{document}